\newcommand{\Z}{\mathbb{Z}}
\newcommand{\C}{\mathbb{C}}
\newcommand{\bk}{\Bbbk}
\newcommand{\simto}{\overset{\sim}{\to}}
\DeclareMathOperator{\id}{id}
\DeclareMathOperator{\im}{im}
\DeclareMathOperator{\Hom}{Hom}
\DeclareMathOperator{\Ext}{Ext}
\DeclareMathOperator{\Sym}{Sym}
\newcommand{\opp}{\mathrm{opp}}
\newcommand{\parity}{\mathrm{parity}}
\newcommand{\proj}{\mathrm{proj}}
\newcommand{\ngmod}{\mathsf{mod}}
\newcommand{\gmod}{\mathsf{gmod}}
\newcommand{\lh}{\text{-}}
\newcommand{\fg}{\mathrm{fg}}
\newcommand{\cO}{\mathcal{O}}
\newcommand{\fh}{\mathfrak{h}}
\newcommand{\scS}{\mathscr{S}}
\newcommand{\cV}{\mathcal{V}}
\newcommand{\cI}{\mathcal{I}}
\newcommand{\cJ}{\mathcal{J}}
\newcommand{\cB}{\mathcal{B}}
\newcommand{\cP}{\mathcal{P}}
\newcommand{\cE}{\mathcal{E}}
\newcommand{\cF}{\mathcal{F}}
\newcommand{\cG}{\mathcal{G}}
\newcommand{\ubk}{\underline{\bk}}
\newcommand{\Sh}{\mathsf{Sh}}
\newcommand{\She}{\Sh}
\newcommand{\For}{\mathsf{For}}
\newcommand{\Parity}{\mathsf{Parity}}
\newcommand{\BMP}{\mathsf{BMP}}
\newcommand{\BMPe}{\BMP}
\newcommand{\BMPc}{\overline{\BMP}}
\newcommand{\cZ}{\mathcal{Z}}
\newcommand{\cC}{\mathcal{C}}
\newcommand{\cD}{\mathcal{D}}
\newcommand{\Cref}{\cC_\mathrm{ref}}
\newcommand{\SBim}{\mathsf{SBim}}
\newcommand{\dashvdl}{\mathbin{\rotatebox[origin=c]{45}{$\dashv$}}}
\newcommand{\dashvdr}{\mathbin{\rotatebox[origin=c]{-45}{$\dashv$}}}
\newcommand{\Kb}{K^{\mathrm{b}}}
\newcommand{\Db}{D^{\mathrm{b}}}
\newcommand{\sD}{\mathsf{D}}
\newcommand{\mix}{\mathrm{mix}}
\newcommand{\Dmix}{D^\mix}
\newcommand{\Dmixe}{\Dmix}
\newcommand{\Dmixc}{\Dmix_c}
\newcommand{\la}{\langle}
\newcommand{\ra}{\rangle}
\newcommand{\p}{{}^p\!}
\newcommand{\Pmix}{\mathsf{P}^\mix}
\newcommand{\Pmixe}{\Pmix}
\newcommand{\Pmixc}{\Pmix_c}
\newcommand{\IC}{\mathcal{IC}}
\newcommand{\Tmixc}{\mathsf{Tilt}^\mix_c}
\newcommand{\cT}{\mathcal{T}}
\newtheorem*{thm*}{Theorem}
\numberwithin{equation}{section}
\newtheorem{thm}{Theorem}[section]
\newtheorem{lem}[thm]{Lemma}
\newtheorem{prop}[thm]{Proposition}
\newtheorem{cor}[thm]{Corollary}
\theoremstyle{definition}
\newtheorem{defn}[thm]{Definition}
\theoremstyle{remark}
\newtheorem{rmk}[thm]{Remark}
\newtheorem{ex}[thm]{Example}
\title{Mixed modular perverse sheaves on moment graphs}
\author{Shotaro Makisumi}
\date{\today}
\address{Department of Mathematics, Stanford University, Stanford, CA, U.S.A.}
\email{makisumi@stanford.edu}
\begin{document}

\begin{abstract}
 We study an analogue of the Achar--Riche ``mixed modular derived category''~\cite{AR-II} for moment graphs. In particular, given a Coxeter group $W$ and a reflection faithful representation $\fh$, we introduce a category that plays the role of Schubert-stratified mixed modular perverse sheaves on ``the flag variety associated to $(W, \fh)$.'' We show that this Soergel-theoretic generalization of graded category $\cO$ is graded highest weight.
\end{abstract}

\maketitle

\section{Introduction}
\label{s:intro}
Motivated by questions in the modular representation theory of reductive groups, Achar and Riche~\cite{AR-II} introduced the ``mixed modular derived category'' of a complex variety with a fixed affine stratification. This is a triangulated category with a notion of ``weights'' and ``Tate twist'' that, for positive characteristic coefficients, may serve as a substitute for Deligne's theory of mixed $\ell$-adic sheaves.

In this paper, we carry out an analogous study for moment graph, a combinatorial gadget with connections to both torus-equivariant geometry and representation theory. We show that the Achar--Riche construction of recollement structure also works in this setting, leading to a ``perverse'' t-structure on the mixed derived category of arbitrary moment graphs, not necessarily arising from geometry.

In particular, given a Coxeter system $(W, S)$ and a suitable ``reflection faithful realization'' $\fh$, this allows us define an abelian category $\Pmixc(\cB)$ of ``constructible mixed perverse sheaves'' on the Bruhat moment graph $\cB$ associated to $(W, \fh)$. This should be viewed as the category of Schubert-stratified mixed modular perverse sheaves on a possibly non-existent flag variety---a Soergel-theoretic generalization of the graded category $\cO$ of~\cite{Soe90, BGS}. One of our main results is that $\Pmixc(\cB)$ is graded highest weight (see Theorem~\ref{thm:ghw}), generalizing the result of~\cite{AR-II} for flag varieties. This lays the groundwork for~\cite{Mak}, in which we prove a Koszul duality in this setting for finite $W$.

\subsection{Motivation from torus-equivariant geometry}
Let $X$ be a complex variety with a fixed affine stratification $\scS$. Let $\Db_\scS(X, \bk)$ be the $\scS$-constructible derived category of sheaves of $\bk$-vector spaces (in this introduction, we work with field coefficients to simplify the exposition). Let also $\Parity_\scS(X, \bk)$ be the full additive subcategory of parity sheaves of Juteau--Mautner--Williamson~\cite{JMW}. Achar--Riche defined~\cite{AR-II} the \emph{mixed derived category} of $X$ with coefficients in $\bk$ as
\[
 \Dmix_\scS(X, \bk) := \Kb\Parity_\scS(X, \bk).
\]
See~\cite{AR-III} for a further study and~\cite{AR} for a related construction.

Meanwhile, given a complex variety $X$ with a suitably nice action of a complex torus $T$ and a $T$-invariant stratification by affine spaces, one may associate a combinatorial gadget $\cG_{X,T}$ called the \emph{moment graph}, a labeled graph encoding the $0$- and $1$-dimensional $T$-orbits~\cite{GKM}. Braden--MacPherson~\cite{BMP} showed that the complex $T$-equivariant intersection cohomology of stratum closures may be computed from certain combinatorially-defined sheaves on $\cG_{X,T}$, nowadays called \emph{BMP sheaves}.

This result was generalized to positive characteristic coefficients $\bk$ by Fiebig--Williamson \cite{FW}. Under further assumptions on the $T$-action, they established an equivalence
\[
 \Parity_T(X, \bk) \cong \BMPe(\cG_{X,T}^\bk)
\]
between $T$-equivariant parity sheaves and BMP sheaves.

Motivated by this connection, we define the \emph{(equivariant) mixed derived category}
\[
 \Dmixe(\cG) := \Kb\BMPe(\cG)
\]
for any moment graph $\cG$, not necessarily arising from geometry. We will show that much of the theory of~\cite{AR-II} can also be developed in this setting.

In particular, we will define ``standard'' and ``costandard'' sheaves associated to each ``stratum'' of $\cG$, as well as a ``perverse'' t-structure on $\Dmixe(\cG)$, with heart $\Pmixe(\cG)$ consisting of ``equivariant mixed perverse sheaves.'' Under a further assumption, we obtain the same results for the ``constructible'' versions $\Dmixc(\cG)$ and $\Pmixc(\cG)$.

\begin{rmk}
 There is a parallel story relating $T$-equivariant sheaves on a toric variety $X$ and sheaves on the associated fan. The analogue of BMP sheaves are called pure sheaves, and Braden--Lunts~\cite{BL} showed that their bounded homotopy category is a mixed version of $\Db_T(X, \C)$. Although we will not explicitly address this setting, the proof of recollement also goes through for (not necessarily rational) fans.
\end{rmk}

\subsection{Motivation from representation theory}
\subsubsection{Relation to Soergel bimodules}
Let $G$ be a connected complex reductive group with Borel subgroup $B$ and maximal torus $T$. Moment graphs of particular interest in representation theory are the \emph{Bruhat (moment) graphs} $\cB$ associated to the left action of $T$ on the flag variety $G/B$ with the Schubert stratification.

Objects related to Bruhat graphs play a key role in Soergel's approach for relating representation theory and geometry. In particular, Soergel gave a new proof of the Kazhdan--Lusztig conjecture by relating the BGG category $\cO$ to the geometry of flag varieties via an intermediate category of ``special modules''~\cite{Soe90}. Fiebig showed that the equivariant analogue of these modules---studied in~\cite{Soe92, Soe00, Soe07} and nowadays called \emph{Soergel bimodules}---are equivalent to BMP sheaves on the Bruhat graph~\cite{Fie-verma, Fie-coxeter}.

This offers an alternative, more geometric point of view on Soergel bimodules. In particular, this allowed Fiebig to give an explicit upper bound on the exceptional primes for Lusztig's conjecture on modular representations of reductive groups~\cite{Fie12}. For an overview of moment graph techniques in representation theory, see~\cite{Fie13}.

\subsubsection{``Geometry'' associated to arbitrary Coxeter groups}
The Bruhat graph is a combinatorial shadow of the flag variety. In fact, like Soergel bimodules, it can be defined for an arbitrary Coxeter group $W$ and a suitable ``reflection faithful representation'' $\fh$. Associated to the datum $(W, \fh)$ are two additive categories: BMP sheaves $\BMPe(\cB)$ on the associated Bruhat graph $\cB$, and Soergel bimodules $\SBim$. Fiebig established an equivalence
\[
 \BMPe(\cB) \cong \SBim
\]
in this generality (see~\S\ref{ss:bmp-sbim}).

In our terminology, we therefore have $\Dmixe(\cB) \cong \Kb\SBim$. With help from Soergel theory, many of the results of~\cite{AR-II} for flag varieties generalize to $\cB$ with the same proof. In particular, we show that $\Pmixc(\cB)$ is graded highest weight (see Theorem~\ref{thm:ghw}). This is a Soergel-theoretic generalization of the graded category $\cO$ of~\cite{Soe90, BGS}.

The idea of viewing $\Kb\SBim$ as the mixed equivariant derived category of the flag variety goes back to Soergel.\footnote{``Komplexe von Bimoduln sind die Gewichtsfiltrierung des armen Mannes'' (quoted in~\cite{WW}). This is made more precise in~\cite{Sch}.} Although no such flag variety exists for general $(W, \fh)$, Elias--Williamson~\cite{EW-hodge} have shown that for any $W$ there exists characteric-zero $\fh$ for which Soergel bimodules behave as though they were equivariant intersection cohomology of actual Schubert varieties---they satisfy hard Lefschetz and the Hodge--Riemann relations---and used this to prove Soergel's conjecture, a Soergel bimodule analogue of the Kazhdan--Lusztig conjecture.

If $W$ is finite and $(W, \fh)$ satisfies Soergel's conjecture, then $\Pmixc(\cB)$ is even Koszul. This motivates one to ask whether $\Pmixc(\cB)$ is Koszul self-dual, generalizing the result of Beilinson--Ginzburg--Soergel~\cite{Soe90, BGS} for category $\cO$. We prove this in~\cite{Mak}. This depends on the framework of the present paper outside the case of $(W, \fh)$ arising from complex reductive groups.

\subsubsection{The 2-braid group and Rouquier complexes}
The category $\Kb\SBim$ was considered earlier in the literature as the ``2-braid group'' by Rouquier~\cite{Rou}, who used certain complexes of Soergel bimodules to categorify the braid group, and has been studied further in~\cite{LW, Jen}. Rouquier complexes play a key technical role in~\cite{EW-hodge} (as a ``weak Lefschetz substitute'') and in the further positivity results of~\cite{Gob}. Particular interest in type A Rouquier complexes arises from their role in Khovanov's definition of triply-graded (HOMFLY) knot homology~\cite{Kho}; see for example~\cite{EK, ARo, Hog, AH, EH, GNR}.

Via the interpretation as $\Dmixe(\cB)$, the present work allows for a more geometric study of the 2-braid group for an arbitrary Coxeter group. In particular, the combinatorial sheaf functors of~\S\ref{s:recollement} allows one to view Rouquier complexes associated to reduced expressions as standard and costandard sheaves (see Theorem~\ref{thm:std-rouquier}). This framework leads to a new, more conceptual proof of two known results: the braid relation for Rouquier complexes~\cite{Rou}; and the main result (``Rouquier's formula'') of~\cite{LW}, a Hom vanishing conjectured in~\cite{Rou-ICM}.

\subsection{Perspective: the Hecke category}
Borel-equivariant parity sheaves on flag varieties, BMP sheaves on Bruhat graphs, and Soergel bimodules are all incarnations of a graded monoidal additive category known as the Hecke category. In fact, Elias and Williamson~\cite{EW-soergel-calculus} (building on~\cite{EK, Eli-dihedral}) defined the diagrammatic Hecke category more generally for a ``realization'' of a Coxeter system over an arbitrary commutative ring.

In this paper, we crucially use the equivalence $\BMP(\cB) \cong \SBim$ to establish the desired properties of $\Dmixc(\cB)$ and $\Pmixc(\cB)$. Since Soergel bimodule theory has only been worked out over a field (and $\SBim$ is in any case ill-behaved for a general realization), this imposes what we believe are unnecessary restrictions on the realization. One expects a more direct relation between the diagrammatic Hecke category and Bruhat BMP sheaves, which would lead to a generalization of the results of the present paper as well as the Koszul duality of~\cite{Mak}.

\subsection{Contents}
In~\S\ref{s:prelim} we fix conventions and recall basic notions about moment graph sheaves and BMP sheaves. In~\S\ref{s:recollement} we define the mixed derived category of a moment graph and show that it admits a recollement structure. In~\S\ref{s:mixed-perverse-sheaves} we define standard and costandard sheaves and the perverse t-structure. In~\S\ref{s:bruhat}, we apply the general theory of~\S\ref{s:recollement}--\ref{s:mixed-perverse-sheaves} to Bruhat graphs. Applications to Rouquier complexes appear in~\S\ref{ss:rouquier-complexes}.

We imitate the development in~\cite{AR-II} for both the general theory and the study of Bruhat graphs. In each part, while the first results often require proofs specific to BMP sheaves, once enough basic results have been proved, the rest of the development can often be copied from~\cite{AR-II} with only notation changes. In such cases, we often omit the proof without comment.

\subsection{Acknowledgements}
This work was started during the Fall 2014 semester on Geometric Representation Theory at the Mathematical Sciences Research Institute. I owe much to P.~Fiebig for his guidance during that semester and his continued encouragement since. I also thank P.~Achar, B.~Elias, S.~Riche, and G.~Williamson for useful discussions.

This work is part of the author's Ph.D.~thesis. I thank my advisor, Z.~Yun, for travel support through his Packard Foundation fellowship, and for countless helpful conversations.

\section{Preliminaries}
\label{s:prelim}
\subsection{Conventions}
Throughout, $\bk$ denotes a commutative ring (of coefficients). All gradings are $\Z$-gradings. For a graded $\bk$-algebra $A$, all homomorphisms between graded $A$-modules are assumed to be of degree $0$. Let $A\lh\gmod$ denote the category of graded $A$-modules and degree 0 homomorphisms. This category has a grading shift autoequivalence $\{1\}$: given $M = \bigoplus M^i \in A\lh\gmod$, we set $M\{n\}^i = M^{i+n}$. For $M, N \in A\lh\gmod$, define the \emph{graded Hom}
\[
 \Hom_{A\lh\gmod}^\bullet(M, N) := \bigoplus_{n \in \Z}\Hom_{A\lh\gmod}(M, N\{n\}).
\]
If $A$ is commutative, this is again a graded $A$-module.

The axioms of recollement~\cite[\S1.4.3]{BBD} have a duality: replacing each category with its opposite category exchanges $i^*$ with $i^!$ and $j_*$ with $j_!$. Recollement thus consists of two dual halves: ``left recollement'' involving $j_!$ and $i^*$, and ``right recollement'' involving $j_*$ and $i^!$. Whenever we say duality in~\S\ref{s:recollement}, we mean this rather than Verdier duality, which we have not developed for general moment graphs.

\subsection{Background on moment graphs}
\label{ss:moment-graph-sheaves}
In this subsection and the next, we review basic notions on moment graphs, largely following~\cite{Fie-verma, Fie-coxeter, FW} but with slight differences.
\begin{defn} \label{defn:moment-graph}
 Let $\fh$ be a free $\bk$-module of finite rank, and let $\fh^* = \Hom_\bk(\fh, \bk)$. An (unordered) \emph{moment graph over $\fh$} consists of the following datum:
 \begin{itemize}
  \item a graph $(\cV, \cE)$ with vertex set $\cV$ and edge set $\cE$;
  \item a map $\alpha \colon \cE \to V^* \setminus \{ 0 \}$ of \emph{edge labels}.
 \end{itemize}
 We assume that any two vertices are connected by at most one edge.
\end{defn}

We will often simply speak of a moment graph $X$ over $\bk$, or even just moment graph $X$, with the other data implied. We write $\cV(X)$ (resp.~$\cE(X)$) for the vertex set (resp.~edge set) of the graph underlying $X$. Any subset $\cI \subset \cV(X)$ defines a \emph{sub-moment graph} of $X$ (over $\fh$) whose graph is the full subgraph corresponding to $\cI$ and whose map of edge labels is obtained by restricting $\alpha$. We will often confuse a set of vertices with the corresponding sub-moment graph.

We write $E\colon x\text{---}y$ to denote an edge connecting vertices $x$ and $y$. A \emph{finite} moment graph $X$ is one for which $\cV(X)$ is finite.

Let $R = \Sym_\bk(V^*)$ the symmetric algebra of $V^*$ over $\bk$, graded so that $\deg(V^*) = 2$.
\begin{defn}
 A \emph{sheaf $\cF$ on a moment graph $(\cV, \cE, \alpha)$ over $\fh$} consists of the following datum:
 \begin{itemize}
  \item a graded $R$-module $\cF^x$ for each $x \in \cV$;
  \item a graded $R$-module $\cF^E$ for each $E \in \cE$, satisfying $\alpha(E)\cF^E = 0$;
  \item a graded $R$-module homomorphism $\rho_{x, E} \colon \cF^x \to \cF^E$ for each pair $(x, E) \in \cV \times \cE$ such that $x$ lies on $E$.
 \end{itemize}
\end{defn}

\begin{rmk}
 In~\cite{Fie-verma, Fie-coxeter, FW}, one instead considers moment graphs $X$ over a free $\Z$-module $\Lambda$, and $\bk$-sheaves on such $X$. In our set-up, these are viewed as sheaves on the moment graph $\bk \otimes_\Z X$ over $\bk \otimes_\Z \Lambda$, obtained from $X$ by base change. Since we do not consider base change in this paper, we will not spell out this notion.
\end{rmk}

Let $\cF, \cG$ be sheaves on a moment graph $X$. A \emph{morphism} $\varphi \colon \cF \to \cG$ consists of graded $R$-module homomorphisms $\varphi^x \colon \cF^x \to \cG^x$ and $\varphi^E \colon \cF^E \to \cG^E$ for each vertex $x$ and edge $E$, compatible with the $\rho$ maps in the obvious way. Denote by $\She(X)$ the category of sheaves on $X$. This category is naturally $\bk$-linear abelian, and has a grading shift autoequivalence $\{1\}$: we define $\cF\{n\}$ by $\cF\{n\}^x = \cF^x\{n\}$, $\cF\{n\}^E = \cF^E\{n\}$, and shifted $\rho$ maps. As with graded modules, we define the \emph{graded Hom}
\[
 \Hom_{\She(X)}^\bullet(\cF, \cG) := \bigoplus_{n \in \Z}\Hom_{\She(X)}(\cF, \cG\{n\}),
\]
which is a graded $R$-module.

For any $\cI \subset \cV(X)$, define the \emph{space of sections of $\cF$ over $\cI$} by
\[
 \Gamma(\cI, \cF) := \left\{ (m_x) \in \prod_{x \in \cI} \cF^x \,\left|\,
                            \begin{matrix} \rho_{x, E}(m_x) = \rho_{y, E}(m_y) \\ \text{ for any edge } E \colon x\text{ --- }y \text{ with } x, y \in \cI \end{matrix}
                            \right.
                     \right\}.
\]
Write $\Gamma(\cF)$ for the \emph{space of global sections} $\Gamma(X, \cF) = \Gamma(\cV(X), \cF)$. For any subsets $\cI \subset \cJ$ of $\cV(X)$, there is an obvious restriction map $\Gamma(\cJ, \cF) \to \Gamma(\cI, \cF)$. In particular, there is a map $\Gamma(\cI, \cF) \to \cF^x$ whenever $x \in \cI$.

\subsection{Background on Braden--MacPherson sheaves}
\label{ss:bmp}
We assume from now on that the vertex set $\cV$ of our moment graph is equipped with a partial order $\le$ such that, for any edge $E\colon x\text{ --- }y$, either $x \le y$ or $y \le x$ (but $x \neq y$). This datum is called an \emph{ordered moment graph}. This induces a topology on $\cV$ in which the open sets are the upward closed sets, i.e.~subsets $\cI$ such that if $x \in \cI$ and $x \le y$, then $y \in \cI$. For any vertex $x$, write $\{ > x \}$ for the set $\{ y \in \cV \mid y > x \}$, and similarly for $\{ \ge x \}$, $\{ < x \}$, and $\{ \le x\}$.

We are interested in the following class of moment graph sheaves.
\begin{defn}
 A sheaf $\cF$ on a moment graph $(\cV, \cE, \alpha)$ is called \emph{BMP (Braden--MacPherson)} if it satisfies the following properties:
 \begin{enumerate}
  \item[(BMP1)] for any $x \in \cV$, $\cF^x$ is a graded free $R$-module of finite rank, and it is nonzero for only finitely many $x$;
  \item[(BMP2)] for any edge $E \colon x \text{ --- } y$ with $x \le y$, the map $\rho_{y, E} \colon \cF^y \to \cF^E$ is surjective with kernel $\alpha(E)\cF^y$;
  \item[(BMP3)] for any open subset $\cJ \subset \cV$, the restriction map $\Gamma(\cF) \to \Gamma(\cJ, \cF)$ is surjective;
  \item[(BMP4)] for any $x \in \cV$, the restriction map $\Gamma(\cF) \to \cF^x$ is surjective.
 \end{enumerate}
\end{defn}

We may replace the last two conditions with the following:
\begin{enumerate}
 \item[(BMP3')] for any $x \in \cV$, $\Gamma(\{ \ge x\}, \cF) \to \Gamma(\{ >x \}, \cF)$ is surjective;
 \item[(BMP4')] for any $x \in \cV$, $\Gamma(\{ \ge x\}, \cF) \to \cF^x$ is surjective.
\end{enumerate}
It will be convenient to introduce some notation. For each vertex $x$, define
\[
 \cV_{\delta x} := \{ y \in \cV \mid \text{there exists an edge } E \colon x\text{---}y \text{ with } x \le y \}
\]
with corresponding edges $\cE_{\delta x}$. For a sheaf $\cF$ and a vertex $x$, define
\[
 u_x\colon \Gamma(\{ >x \}, \cF) \hookrightarrow \bigoplus_{y > x} \cF^y \xrightarrow{p} \bigoplus_{y \in \cV_{\delta x}} \cF^y \xrightarrow{\bigoplus \rho_{y, E}} \bigoplus_{E \in \cE_{\delta x}} \cF^E,
\]
where $p$ is the projection, and
\[
 d_x := (\rho_{x, E})_{E \in \cE_{\delta x}} \colon \cF^x \to \bigoplus_{E \in \cE_{\delta x}} \cF^E.
\]
Thus (BMP3') and (BMP4') say that $\im(u_x) = \im(d_x)$.\footnote{This was original definition of Braden and MacPherson~\cite[Remark after Corollary~1.3]{BMP}.}

Let $\BMPe(X) \subset \She(X)$ denote the full additive subcategory of BMP sheaves. For local ring coefficients, there is a classification theorem.
\begin{thm}[\cite{FW}, Theorem~6.4] \label{thm:bmp-classification}
 Let $X$ be a moment graph over a local ring $\bk$. Assume that $\{ \le x \}$ is finite for any $x \in \cV(X)$. Then every BMP sheaf on $X$ is isomorphic to a finite direct sum of shifts of indecomposable BMP sheaves $\cE_x$ for $x \in \cV(X)$, which are characterized by the following properties:
 \begin{itemize}
  \item $(\cE_x)^y = 0$ unless $y \le x$ (support condition), and $(\cE_x)^x \cong R$ (normalization);
  \item $\cE_x$ is indecomposable in $\She(X)$.
 \end{itemize}
\end{thm}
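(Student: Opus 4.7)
The plan is to follow the original construction of Braden--MacPherson, adapted to graded $R$-modules over a local base ring $\bk$. The crucial technical input is that over a graded local ring, every finitely generated graded module admits a \emph{graded projective cover}, unique up to non-unique isomorphism; this replaces the minimality arguments available over a field. The three things to prove are: (i) existence of $\cE_x$, (ii) its uniqueness up to isomorphism, equivalently indecomposability via Krull--Schmidt, and (iii) decomposition of an arbitrary BMP sheaf into shifts of the $\cE_x$.

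For existence of $\cE_x$, I would build the stalks by downward induction on the finite poset $\{\le x\}$. Set $(\cE_x)^x := R$ and $(\cE_x)^y := 0$ for $y \not\le x$; and for any edge $E\colon y \text{---} z$ with $y < z$ already constructed, set $(\cE_x)^E := (\cE_x)^z/\alpha(E)(\cE_x)^z$ with $\rho_{z,E}$ the projection, which enforces (BMP2). Then, proceeding downward, for each $y < x$ whose strictly larger neighbors have been built, one has the map $u_y\colon \Gamma(\{>y\}, \cE_x) \to \bigoplus_{E \in \cE_{\delta y}} (\cE_x)^E$. Define $(\cE_x)^y$ to be a graded projective cover of $\im(u_y)$ in $R\lh\gmod$; since $\bk$ (hence $R$, locally) is graded local, this cover exists and is a graded free module of finite rank. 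Define $d_y\colon (\cE_x)^y \to \bigoplus (\cE_x)^E$ as the lift of the cover, and lift further to maps $\rho_{y,E}$ into each component. By construction $\im(d_y) = \im(u_y)$, so (BMP3')+(BMP4') are forced at $y$. Finiteness of $\{\le x\}$ guarantees the induction terminates, and (BMP1) is evident.

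For indecomposability, I would show that $\End(\cE_x)$ is a graded local ring. The support condition forces $\varphi^x \in \End_R(R) = R$ for any $\varphi \in \End(\cE_x)$. If $\varphi^x$ is invertible (i.e.\ a unit in $R$, which is graded local since $\bk$ is), then one argues by downward induction that each $\varphi^y$ is an isomorphism: at step $y$, the graded projective cover property of $(\cE_x)^y \twoheadrightarrow \im(u_y)$, together with the fact that the induced map on $\im(u_y)$ is an isomorphism (by the inductive hypothesis applied to the larger neighbors), implies via Nakayama that $\varphi^y$ is an isomorphism. Otherwise $\varphi^x$ lies in the unique graded maximal ideal, and the set of such $\varphi$ forms an ideal in $\End(\cE_x)$. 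This yields locality and hence indecomposability; the same induction also proves uniqueness up to isomorphism of $\cE_x$, since any two candidates are connected by a morphism that is an isomorphism at $x$.

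For the decomposition of a general BMP sheaf $\cF$, I would pick $x$ maximal in the (finite) set of $y$ with $\cF^y \neq 0$. By (BMP1), $\cF^x$ is graded free of finite rank, so splits as a sum of $R\{n_i\}$. A standard argument using (BMP3)+(BMP4) lifts each summand $R\{n_i\} \hookrightarrow \cF^x$ to a morphism $\cE_x\{n_i\} \to \cF$, inductively extending the map stalk by stalk over $\{ \le x \}$ by lifting against the projective covers used in the construction of $\cE_x$; moreover, the induced map on stalks at $x$ being a split injection, together with the Krull--Schmidt property from (ii), implies that the resulting morphism exhibits $\bigoplus \cE_x\{n_i\}$ as a direct summand of $\cF$. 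One then induces on the size of the support, which is finite by (BMP1), to complete the decomposition. The main obstacle is step (i): arranging the lifts $\rho_{y,E}$ so that the identity $\im(d_y) = \im(u_y)$ really holds after the projective-cover choice and interacts correctly with all previously constructed data---this is precisely where the graded local hypothesis on $\bk$ is essential, since over a non-local ring the projective cover of $\im(u_y)$ need not exist and the induction has no canonical starting point.
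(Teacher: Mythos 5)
The paper offers no proof of this statement: Theorem~\ref{thm:bmp-classification} is imported verbatim from \cite{FW}, Theorem~6.4, so your proposal can only be measured against the standard Braden--MacPherson/Fiebig--Williamson argument. Your steps (i) and (ii) reproduce that argument correctly: the downward induction defining $(\cE_x)^y$ as a graded projective cover of $\im(u_y)$, and the locality of $\mathrm{End}(\cE_x)$ via the ring homomorphism $\varphi \mapsto \varphi^x \in \bk$ whose unit preimages are isomorphisms by your Nakayama induction. Your closing worry about step (i) is misplaced: once $(\cE_x)^y$ is \emph{defined} as a projective cover of $\im(u_y)$ and $d_y$ as the covering map, the equality $\im(d_y) = \im(u_y)$ holds by construction; the local hypothesis is needed only so that the cover exists and so that finitely generated graded projectives over the graded local ring $R$ are graded free.

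The genuine gap is in step (iii). You construct only the map $\alpha\colon \cE_x\{n_i\} \to \cF$ and then assert that $\alpha^x$ being a split injection, ``together with Krull--Schmidt,'' exhibits $\cE_x\{n_i\}$ as a direct summand. Krull--Schmidt cannot manufacture the required left inverse: to split $\alpha$ you must separately construct a retraction $\beta\colon \cF \to \cE_x\{n_i\}$ with $(\beta\alpha)^x = \id$, by the dual downward induction --- start from the projection $\cF^x \twoheadrightarrow R\{n_i\}$ and, at each $y$, lift the composite $\cF^y \twoheadrightarrow \im(d_y^{\cF}) = \im(u_y^{\cF}) \to \im(u_y^{\cE_x}) = \im(d_y^{\cE_x})$ through $d_y^{\cE_x}$ using the projectivity of $\cF^y$. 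Only then does the locality of $\mathrm{End}(\cE_x)$ from your step (ii) apply, forcing $\beta\alpha$ to be an automorphism. (Relatedly, your description of $\alpha$ conflates the two directions: for $\cE_x\{n_i\} \to \cF$ one lifts the projective module $(\cE_x)^y$ against $d_y^{\cF}$, using (BMP3$'$)+(BMP4$'$) for $\cF$; the projective covers ``used in the construction of $\cE_x$'' are relevant only for $\beta$ and for the endomorphism argument.) Finally, to run the induction on the support you must also record that a direct summand of a BMP sheaf is again BMP --- easy, but it is the step that lets you feed the complement back into the induction.
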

Moreover, this direct sum decomposition is unique in the obvious sense, which we omit.

\subsection{Notation and terminology}
Throughout this paper, we will use notation and terminology to emphasize the analogy with geometry. For example, a moment graph will be denoted by $X, Y$, etc. Given a moment graph $X$, the sub-moment graph consisting of a single vertex $s$ will be denoted by $X_s$ and called a ``stratum.'' By a ``closed inclusion'' $i\colon Z \hookrightarrow X$ or ``closed union of strata'' $Z$, we mean that $Z$ is a sub-moment graph given by a closed subset of $\cV(X)$. Similar remarks apply for ``open inclusion'' $j\colon U \hookrightarrow X$ and ``locally closed inclusion'' $h \colon Y \hookrightarrow X$. Given a closed inclusion, its ``open complement'' has the obvious meaning.

Moment graph sheaves will be denoted by $\cE, \cF, \cG$, etc. A sheaf $\cF$ on $X$ is said to be \emph{supported} on a closed sub-moment graph $Z$ if $\cF^s = 0$ and $\cF^E = 0$ for all $s \notin \cV(Z)$ and $E \notin \cE(Z)$.

\section{Recollement on moment graphs}
\label{s:recollement}

Unlike the derived category of sheaves on a topological space, the category of sheaves on a moment graph is abelian and has neither the full assortment of sheaf functors nor distinguished triangles. We will show (\S\ref{ss:naive-functors}), however, that it has a ``partial recollement'' structure, and that BMP sheaves satisfy ``distinguished triangle'' short exact sequences of Homs analogous to those of parity sheaves. These straightforward observations are in fact exactly what is needed to carry out the Achar--Riche construction of recollement (\S\ref{ss:recollement}).

Much of the idea in~\S\ref{ss:naive-functors} already appears in the work of Fiebig and (in the toric setting) Braden--Lunts; see Remarks~\ref{rmk:naive-functors-Fiebig} and \ref{rmk:naive-functors-fans}.

\subsection{Naive sheaf functors}
\label{ss:naive-functors}

In this subsection, let $X$ be a finite moment graph, $i\colon Z \hookrightarrow X$ a closed inclusion, and $j\colon U \hookrightarrow X$ its open complement.

\begin{prop} \label{prop:naive-partial-recollement}
 There exist $\bk$-linear functors
 \[
  \xymatrix@C=1.5cm{
   \She(Z) \ar[r]|{i_*} &
   \She(X) \ar[r]|{j^*} \ar@/_1pc/[l]_{i^{[*]}} \ar@/^1pc/[l]^{i^{[!]}} &
   \She(U)
  }
 \]
 such that $i_*$ is a fully faithful, $j^*$ is essentially surjective, $j^*i_* = 0$, $i^{[*]}i_* \cong \id$, $i^{[!]}i_* \cong \id$, and $i^{[*]} \dashv i_* \dashv i^{[!]}$. 
\end{prop}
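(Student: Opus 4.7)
The plan is to define all four functors by explicit, pointwise formulas on vertices and edges, and then to check the seven properties by a bookkeeping argument that isolates the nontrivial content in the behaviour at edges ``crossing'' between $U$ and $Z$ (i.e.~edges $E\colon x\text{---}y$ with $x \in \cV(U)$ and $y \in \cV(Z)$). Concretely, for $\cF \in \She(Z)$ I would define $i_*\cF$ by extension by zero: set $(i_*\cF)^x = \cF^x$ and $(i_*\cF)^E = \cF^E$ whenever $x \in \cV(Z)$, $E \in \cE(Z)$, and zero elsewhere, with $\rho$-maps inherited or zero. For $\cG \in \She(X)$, define $j^*\cG$ by restricting all data indexed by $\cV(U)$ and $\cE(U)$, and define $i^{[*]}\cG$ similarly by restricting to the data indexed by $\cV(Z)$ and $\cE(Z)$; in particular, both forget the contribution of crossing edges. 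Finally, define $i^{[!]}\cG$ on edges by $(i^{[!]}\cG)^E = \cG^E$ for $E \in \cE(Z)$, and on vertices by the ``sections away from $U$'' formula
\[
 (i^{[!]}\cG)^y = \bigcap_E \ker\bigl(\rho^\cG_{y, E}\bigr),
\]
where $E$ ranges over crossing edges incident to $y$; the $\rho$-maps internal to $Z$ restrict to this submodule.

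Once these definitions are in place, the properties $i_*$ fully faithful, $j^*$ essentially surjective, $j^* i_* = 0$, and $i^{[*]} i_* \cong \id$ are immediate from inspection. The isomorphism $i^{[!]} i_* \cong \id$ follows from the observation that $(i_*\cF)^E = 0$ for every crossing edge $E$, so the corresponding $\rho^{i_*\cF}_{y, E}$ vanishes identically, and the intersection of their kernels is all of $\cF^y$. The adjunction $i^{[*]} \dashv i_*$ then reduces to noting that any morphism $\cF \to i_*\cG$ is forced to be zero on data outside $Z$, and its compatibility at crossing edges is automatic (since the relevant $\rho^{i_*\cG}_{y, E}$ vanishes); what remains is precisely the data of a morphism $i^{[*]}\cF \to \cG$ in $\She(Z)$.

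The adjunction $i_* \dashv i^{[!]}$ is the only step that uses the definition of $i^{[!]}$ in a nontrivial way. A morphism $\varphi\colon i_*\cF \to \cG$ is determined by its components $\varphi^y\colon \cF^y \to \cG^y$ for $y \in \cV(Z)$ and $\varphi^E\colon \cF^E \to \cG^E$ for $E \in \cE(Z)$, subject to the usual $\rho$-compatibility inside $Z$; the extra constraint coming from a crossing edge $E\colon x\text{---}y$ (with $x \in \cV(U)$) reads $\rho^\cG_{y, E} \circ \varphi^y = 0$, which is exactly the condition that $\varphi^y$ factor through the submodule $(i^{[!]}\cG)^y$. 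This yields the natural bijection $\Hom_X(i_*\cF, \cG) \cong \Hom_Z(\cF, i^{[!]}\cG)$, and $\bk$-linearity and functoriality of all four assignments are manifest from the definitions.

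I expect no serious obstacle: the argument is almost entirely formal once the correct definition of $i^{[!]}$ is identified. The main subtlety — and the only conceptual point — is recognizing that $i^{[!]}$ cannot be naive restriction (which already plays the role of $i^{[*]}$) but must be defined by intersecting kernels of the outgoing $\rho$-maps at the $U$--$Z$ boundary, so that the compatibility conditions at crossing edges translate precisely into a factorization property characterizing the right adjoint.
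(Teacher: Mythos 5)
Your proposal is correct and matches the paper's proof essentially verbatim: same extension-by-zero/restriction definitions, the same kernel-intersection definition of $i^{[!]}$ (your ``crossing edges incident to $y$'' coincide with the paper's $\cE_{\delta s}\setminus\cE(Z)$ because $Z$ is downward closed, so every crossing edge at a vertex of $Z$ points into $U$), and the same reduction of the two adjunctions to the vanishing of the data at crossing edges. The paper merely asserts these verifications are ``easily checked,'' while you spell them out.
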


As usual, set $i_! = i_*$ and $j^! = j^*$. We write $i^{[*]}$ and $i^{[!]}$ to emphasize that these naive restriction and corestriction functors do not have all the usual properties of sheaf functors from geometry.

\begin{proof}
 Let $\cF \in \She(Z)$. Define $i_*\cF \in \She(X)$ as extension by 0:
 \[
 \begin{gathered}
  (i_*\cF)^s = \begin{cases} \cF^s &\mbox{if } s \in \cV(Z) \\ 
                              0     &\mbox{otherwise,}
                \end{cases} \qquad
  (i_*\cF)^E = \begin{cases} \cF^E &\mbox{if } E \in \cE(Z) \\ 
                              0     &\mbox{otherwise,}
                \end{cases} \\
  \rho^{i_*\cF}_{s, E} =
   \begin{cases}
    \rho^\cF_{s, E} &\mbox{if } s \in \cV(Z) \mbox{ and } E \in \cE(Z) \\ 
    0               &\mbox{otherwise.}
   \end{cases}
 \end{gathered}
 \]
 Let $\cF \in \She(X)$. Define $j^*\cF \in \She(U)$ and $i^{[*]}\cF \in \She(Z)$ as restriction:
 \[
  (j^*\cF)^s = \cF^s, \qquad (j^*\cF)^E = \cF^E, \qquad \rho^{j^*\cF}_{s, E} = \rho^\cF_{s, E}
 \]
 for all relevant $s \in \cV(U)$ and $E \in \cE(U)$, and similarly for $i^{[*]}\cF$. Finally, define
 \begin{align*}
  (i^{[!]}\cF)^s &= \{ m \in \cF^s \mid \rho^\cF_{s, E}(m) = 0 \mbox{ for all } E \in \cE_{\delta s} \setminus \cE(Z) \}, \\
  (i^{[!]}\cF)^E &= \cF^E, \qquad \rho^{i^{[!]}\cF}_{s, E} = \rho^\cF_{s, E}|_{(i^{[!]}\cF)^s}.
 \end{align*}
 
 These clearly extend to functors $i_* = i_!$, $j^* = j^!$, $i^{[*]}$, and $i^{[!]}$. All statements except the adjunctions are clear. For the adjunctions, it suffices to show that for any $\cF \in \She(Z)$ and $\cG \in \She(X)$, the maps
 \begin{align*}
  \Hom_{\She(X)}(\cG, i_*\cF) &\to \Hom_{\She(Z)}(i^{[*]}\cG, i^{[*]}i_*\cF) \cong \Hom_{\She(Z)}(i^{[*]}\cG, \cF) \\
  \Hom_{\She(X)}(i_!\cF, \cG) &\to \Hom_{\She(Z)}(i^{[!]}i_!\cF, i^{[!]}\cG) \cong \Hom_{\She(Z)}(\cF, i^{[!]}\cG)
 \end{align*}
 induced by $i^{[*]}$ and $i^{[!]}$, respectively, are isomorphisms. Both are easily checked.
\end{proof}
For convenience, we describe the unit $\varepsilon\colon \cF \to i_*i^{[*]}\cF$ and counit $\eta\colon i_*i^{[!]}\cF \to \cF$ concretely. Since $i_*i^{[*]}\cF$ and $i_*i^{[!]}\cF$ are supported on $Z$, the maps $\varepsilon^s$, $\varepsilon^E$, $\eta^s$, $\eta^E$ are necessarily zero for $s \notin \cV(Z)$ and $E \notin \cE(Z)$. For $s \in \cV(Z)$ and $E \in \cE(Z)$, we have
\begin{align*}
 \varepsilon^s&\colon \cF^s \to \cF^s = (i_*i^{[*]}\cF)^s, &\varepsilon^E\colon \cF^E \to \cF^E = (i_*i^{[*]}\cF)^E, \\
 \eta^s&\colon (i_*i^{[!]}\cF)^s = (i^{[!]}\cF)^s \hookrightarrow \cF^s, &\eta^E\colon (i_*i^{[!]}\cF)^E = \cF^E \to \cF^E,
\end{align*}
where $\eta^s$ is the inclusion, and $\eta^E, \varepsilon^s, \varepsilon^E$ are the identity maps. Note that for any edge $E \in \cE_{\delta s} \setminus \cE(Z)$, the diagram
\[
 \xymatrix{
  0 \ar[r]^-{\eta^E = 0} & \cF^E \\
  (i_*i^{[!]}\cF)^s \ar@{^(->}[r]_-{\eta^s} \ar[u]^{\rho^{i_*i^{[!]}\cF}_{s, E}} & \cF^s \ar[u]_{\rho^\cF_{s, E}}
 }
\]
commutes by the definition of $(i^!\cF)^s$.

\begin{prop} \label{prop:BMP-pseudo-dt}
 For $\cE, \cF \in \BMPe(X)$, we have the short exact sequences of $\bk$-modules
 \begin{align*}
  0 \to \Hom_{\She(X)}(i_*i^{[*]}\cE, \cF) \xrightarrow{\alpha} \Hom_{\She(X)}(\cE, \cF) &\xrightarrow{\gamma} \Hom_{\She(U)}(j^*\cE, j^*\cF) \to 0, \\
  0 \to \Hom_{\She(X)}(\cE, i_*i^{[!]}\cF) \xrightarrow{\beta} \Hom_{\She(X)}(\cE, \cF) &\xrightarrow{\gamma} \Hom_{\She(U)}(j^*\cE, j^*\cF) \to 0,
 \end{align*}
 where $\alpha$ (resp.~$\beta$) is composition with $\varepsilon$ (resp.~$\eta$), and $\gamma$ is induced $j^*$.
\end{prop}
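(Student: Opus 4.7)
The plan is to prove both sequences in parallel. Injectivity of $\alpha$ and $\beta$, the vanishings $\gamma\circ\alpha = 0 = \gamma\circ\beta$, and exactness at the middle will all follow quickly from the explicit description of $\varepsilon$, $\eta$, and the BMP structure; the real work is to establish surjectivity of the common map $\gamma$.

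For injectivity of $\alpha$: if $\phi\circ\varepsilon = 0$, then since $\varepsilon^s$ and $\varepsilon^E$ are identities on the vertex resp.\ edge components supported on $Z$ (and $i_*i^{[*]}\cE$ is supported on $Z$), one immediately reads off $\phi = 0$; injectivity of $\beta$ is dual, using that $\eta^s$ is an inclusion. The vanishings $\gamma\circ\alpha = \gamma\circ\beta = 0$ follow from $j^*i_* = 0$. For middle exactness, suppose $j^*\phi = 0$, so all components $\phi^s$ ($s \in \cV(U)$) and $\phi^E$ ($E \in \cE(U)$) vanish. For an edge $E\colon x\text{---}y$ with $x \in \cV(Z)$ and $y \in \cV(U)$, one has $x < y$ since $Z$ is downward closed, so (BMP2) for $\cE$ makes $\rho^\cE_{y,E}$ surjective, and $\phi^E\circ\rho^\cE_{y,E} = \rho^\cF_{y,E}\circ\phi^y = 0$ forces $\phi^E = 0$. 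Hence $\phi$ is supported on $Z$, which immediately yields a factorization through $\varepsilon$ for the first sequence; for the second sequence, the compatibility $\rho^\cF_{x,E}\circ\phi^x = \phi^E\circ\rho^\cE_{x,E} = 0$ at such an edge shows $\phi^x(\cE^x) \subset (i^{[!]}\cF)^x$, producing a factorization through $\eta$.

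Surjectivity of $\gamma$ is the main obstacle and proceeds by induction on $|\cV(Z)|$. For the inductive step, pick a maximal $s \in \cV(Z)$; then $Z' := Z \setminus \{s\}$ remains downward closed in $X$, while $s$ is minimal in the open subgraph $U' := U \cup \{s\}$. The inductive hypothesis applied to $Z' \hookrightarrow X$ reduces the problem to extending $\psi$ across a single minimal stratum $\{s\} \subset U'$, i.e.,~to the one-vertex case with $Z = \{s\}$ minimal in the ambient moment graph.

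This one-vertex case is the heart of the argument and uses the BMP axioms essentially. We construct $\phi$ in two stages. First, for each edge $E\colon s\text{---}y$ with $y > s$, the composite $\rho^\cF_{y,E}\circ\psi^y\colon \cE^y \to \cF^E$ kills $\alpha(E)\cE^y = \ker\rho^\cE_{y,E}$ (since $\alpha(E)$ annihilates $\cF^E$), hence descends to a unique $\phi^E\colon \cE^E \to \cF^E$ via the identification $\cE^E \cong \cE^y/\alpha(E)\cE^y$ from (BMP2). Second, since $\cE^s$ is graded free (BMP1), it suffices to specify $\phi^s$ on a homogeneous basis. For a homogeneous $m \in \cE^s$, use (BMP4') for $\cE$ to pick a lift $(m_y) \in \Gamma(\{\ge s\},\cE)$; then $(\psi^y(m_y))_{y>s} \in \Gamma(\{>s\},\cF)$ by compatibility of $\psi$, and unraveling definitions shows $u^\cF_s$ sends this section to $(\phi^E(\rho^\cE_{s,E}(m)))_{E \in \cE_{\delta s}}$. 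The key identity $\im(u^\cF_s) = \im(d^\cF_s)$, i.e.,~(BMP3')+(BMP4') for $\cF$, then supplies a homogeneous $n \in \cF^s$ of the correct degree with $d^\cF_s(n)$ equal to this tuple; set $\phi^s(m) := n$ and extend $R$-linearly. The resulting $\phi$ is a morphism of sheaves restricting to $\psi$, since the nontrivial edge compatibilities at $s$ are exactly the defining equation $d^\cF_s\circ\phi^s = (\phi^E\circ\rho^\cE_{s,E})_E$. The hard part is precisely this coupling of the (BMP4') lifting for $\cE$ with the $\im(u_s) = \im(d_s)$ axiom for $\cF$, mediated by functoriality of $\psi$ on $U$.
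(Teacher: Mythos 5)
Your proposal is correct and follows essentially the same route as the paper: middle exactness via (BMP2) killing the boundary-edge components, and surjectivity of $\gamma$ by factoring $j$ into one-stratum steps and then combining (BMP2) for the edge maps with $\im(u_s)=\im(d_s)$ and graded freeness of $\cE^s$ for the vertex map. The only cosmetic difference is that you prove the two sequences in parallel, whereas the paper deduces the second from the first by noting that the adjunction isomorphism identifies $\alpha$ with $\beta$.
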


\begin{rmk}
 In the case of parity sheaves, these sequences are obtained by applying $\Hom(-, \cF)$ to the distinguished triangle $j_!j^*\cE \to \cE \to i_*i^*\cE \to$ and using adjunction and the fact that odd $\Ext$ of parity sheaves vanish (or dually, by applying $\Hom(\cE, -)$ to $i_*i^!\cF \to \cF \to j_*j^*\cF \to$).
\end{rmk}

The surjectivity already appears in the proof of~\cite[Proposition~5.1]{Fie-verma}. We repeat the proof for convenience.
\begin{proof}
 The adjunction isomorphism between the first terms is easily seen to identify $\alpha$ and $\beta$, so it suffices to check the first sequence. Clearly $\gamma \circ \alpha = 0$, and $\alpha$ is injective since $i^{[*]}\varepsilon$ is the identity map. Let $\varphi\colon \cE \to \cF$ be such that $j^*\varphi = 0$. Since $\cE$ satisfies (BMP2) (we only use the surjectivity), $\varphi^E = 0$ for any $E \in \cV_{\delta Z}$ as well. Since $i^{[*]}\varepsilon$ is the identity map, we may lift $\varphi$ to $\psi\colon i_*i^{[*]}\cE \to \cF$ by setting $i^{[*]}\psi = i^{[*]}\varphi$. Thus $\ker(\gamma) \subset \im(\alpha)$.
 
 It remains to show that $\gamma$ is surjective. Since $X$ is finite, we may assume by factoring $j$ that $Z$ is a closed stratum $X_s$. For any edge $E\colon s\text{---}t$ (so $s \le t$ and $t \in U$), by (BMP2) for $\cE$, $\varphi^t$ induces a map $\cE^E \to \cF^E$, which we define to be $\varphi^E$:
 \[
  \xymatrix{
   \cE^t \ar[r]^{\varphi^t} \ar[d]_{\rho^\cE_{t, E}} & \cF^t \ar[d]^{\rho^\cF_{t, E}} \\
   \cE^E \ar@{-->}[r]^{\varphi^E} & \cF^E
  }
 \]
 Consider the following diagram:
 \[
  \xymatrix{
   \Gamma(\{ >s \}, \cE) \ar[r] \ar[d]_{u^\cE_s} & \Gamma(\{ >s \}, \cF) \ar[d]^{u^\cF_s} \\
   \bigoplus_{E \in \cV_{\delta s}}\cE^E \ar[r]^{\bigoplus\varphi^E} & \bigoplus_{E \in \cV_{\delta s}}\cF^E \\
   \cE^x \ar@{-->}[r]^{\varphi^s} \ar[u]^{d^\cE_s} & \cF^s \ar[u]_{d^\cF_s}
  }
 \]
 By (BMP3') and (BMP4') for both $\cE$ and $\cF$, we have $\im(\bigoplus \varphi^E \circ d^\cE_s) \subset \im(d^\cF_s)$. Since $\cE^x$ is projective by (BMP1), there exists $\varphi^s$ making the bottom square commute.
\end{proof}
\begin{rmk} \label{rmk:naive-functors-Fiebig}
 One can similarly define functors $h^{[*]}$ and $h^{[!]}$ for any inclusion $h\colon Y \hookrightarrow X$. It is clear that $j^{[*]} = j^{[!]}$ for an open inclusion $j$ and that all functors satisfy the usual composition property. For the inclusion $i_s\colon X_s \hookrightarrow X$ of a stratum, $i^{[!]}_s$ recovers Fiebig's notion of \emph{partial costalk}.
 
 Fiebig defines an exact structure on a full subcategory of moment graph sheaves that depends on the partial order on the strata~\cite[\S4.1]{Fie-verma}. The ``canonical short exact sequence'' of~\cite[Lemma~4.3]{Fie-verma} is the analogue of our ``distinguished triangle $i_*i^{[!]} \to \id \to j_*j^* \to $.''
\end{rmk}
\begin{rmk} \label{rmk:naive-functors-fans}
 For sheaves on fans, the analogue of Proposition~\ref{prop:BMP-pseudo-dt} (when $Z$ is a closed stratum) is stated in~\cite[Lemma~6.6.4]{BL}.
\end{rmk}

For $s \in \cV(X)$, let $i_s\colon X_s \hookrightarrow X$ be the inclusion of a stratum. Then the functor $i_s^{[*]}$ clearly restricts to $\BMPe(X) \to \BMPe(X_s)$. The analogous statement for $i_s^{[!]}$ is not true in general; a partial costalk of a BMP sheaf may not be graded free. We isolate this property as an additional condition on $X$:
\begin{enumerate}
 \item[\bf(V)] For all $s \in \cV(X)$, $i_s^{[!]}$ restricts to a functor $\BMPe(X) \to \BMPe(X_s)$.
\end{enumerate}
\begin{rmk} \label{rmk:bmp-verma-flag}
 In Fiebig's language, this says that that BMP sheaves on $X$ admit a Verma flag. This is known in the following cases: moment graphs associated to projective varieties with a suitable torus action~\cite{BL}; regular and subregular Bruhat graphs~\cite{Fie-coxeter} (see~Proposition~\ref{prop:bruhat-V}); (not necessarily rational) fans in the toric setting~\cite[Theorem~6.6.1(3)]{BL}.
\end{rmk}
\begin{lem} \label{lem:V-open}
 Let $X$ be a moment graph satisfying condition {\bf (V)}. Let $U$ be an open union of strata, and let $i_s^U\colon X_s \hookrightarrow U$ be the inclusion of a stratum that is closed in $U$. Then $(i_s^U)^{[!]}$ restricts to a functor $\BMPe(U) \to \BMPe(X_s)$.
\end{lem}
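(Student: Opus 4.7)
The plan is to reduce the claim to condition \textbf{(V)} on $X$ itself, by lifting any $\cF \in \BMPe(U)$ to a BMP sheaf on $X$. The key geometric observation is that $s$ must be minimal in $U$ (since $X_s$ is closed there) and $U$ is upward-closed in $X$, so every edge in $\cE_{\delta s}$ automatically lies in $\cE(U)$. Consequently, for any $\tilde\cF \in \She(X)$ with $j^*\tilde\cF = \cF$, the partial costalk at $s$ is given by the same formula whether computed in $U$ or in $X$:
\[
 \bigl((i_s^U)^{[!]}\cF\bigr)^s \;=\; \bigl((i_s^X)^{[!]}\tilde\cF\bigr)^s.
\]

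First, I would construct such a lift. Using the classification Theorem~\ref{thm:bmp-classification}, decompose $\cF \cong \bigoplus_i \cE_{x_i}^U\{n_i\}$ into shifted indecomposable BMP sheaves on $U$, and set $\tilde\cF := \bigoplus_i \cE_{x_i}^X\{n_i\} \in \BMPe(X)$. For each $i$, the restriction $j^*\cE_{x_i}^X$ is a BMP sheaf on $U$ with support in $\{\le x_i\}\cap U$ and stalk $R$ at $x_i$, so another application of the classification forces $\cE_{x_i}^U$ to appear in its decomposition with multiplicity one (the remaining summands being of the form $\cE_y^U\{m\}$ with $y < x_i$). Summing over $i$, we obtain $j^*\tilde\cF \cong \cF \oplus \cG$ for some $\cG \in \BMPe(U)$. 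Applying \textbf{(V)} on $X$ to $\tilde\cF$, combined with the identification above and additivity of $(i_s^U)^{[!]}$, exhibits $\bigl((i_s^U)^{[!]}\cF\bigr)^s$ as a graded direct summand of a graded free $R$-module of finite rank.

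To conclude, I would invoke graded Nakayama: since $R = \Sym_\bk(V^*)$ is non-negatively graded with $R^0 = \bk$ local, any finitely generated graded projective $R$-module is graded free, so $\bigl((i_s^U)^{[!]}\cF\bigr)^s$ is graded free of finite rank---which is the only nontrivial BMP condition on the single-vertex graph $X_s$. The main obstacle is the construction of the lift $\tilde\cF$: without the classification theorem it is unclear how to extend a BMP sheaf across the closed boundary $X\setminus U$, since a naive extension by zero already fails (BMP2) on any edge joining $X\setminus U$ to $U$.
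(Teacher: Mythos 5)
Your proof is correct and follows the same strategy as the paper's: lift $\cF$ across the open inclusion $j\colon U \hookrightarrow X$ and apply {\bf (V)} on $X$, using that $U$ is upward closed so that $\cE_{\delta s}$ is the same whether computed in $U$ or in $X$ (the paper phrases this as $(i_s^U)^{[!]}j^{[!]} = i_s^{[!]}$). The only real difference is in the lifting step: the paper takes an exact lift, i.e.\ $\cF' \in \BMPe(X)$ with $j^*\cF' = \cF$ on the nose (which holds because the Braden--MacPherson construction of $\cE_x$ only looks upward from each vertex, so $j^*\cE_x^X \cong \cE_x^U$ for $x \in \cV(U)$), whereas you settle for a lift containing $\cF$ as a direct summand and then patch with graded Nakayama. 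Your variant is fine, but it costs an extra appeal to the classification theorem (hence requires $\bk$ local), which the exact-lift argument does not strictly need.
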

\begin{proof}
 Let $i_s\colon X_s \hookrightarrow U$ and $j\colon U \hookrightarrow X$ be the inclusions. Given $\cE \in \BMPe(U)$, since $U$ is open, there exists $\cF \in \BMPe(X)$ with $j^{[!]}\cF = j^*\cF = \cE$. Then $(i_s^U)^{[!]}\cE = (i_s^U)^{[!]}j^{[!]}\cF = i_s^{[!]}\cF$, which lies in $\BMPe(X_s)$ by {\bf (V)}.
\end{proof}

\subsection{The mixed derived category}
\label{ss:recollement}

\begin{defn}
 Define the \emph{(equivariant) mixed derived category} of $X$ by
 \[
  \Dmixe(X) := \Kb\BMPe(X).
 \]
\end{defn}
This category has a grading shift $\{1\}$ inherited from $\BMPe(X)$ and a new cohomological shift $[1]$. Define the \emph{Tate twist}
\[
 \la 1 \ra := \{-1\}[1].
\]

Let $X$ be a finite moment graph over $\bk$. Let $j\colon U \hookrightarrow X$ be an inclusion of an open union of strata, with closed complement $i\colon Z \hookrightarrow X$. The naive sheaf functors $i_*$ and $j^*$ clearly preserve BMP sheaves, hence induce exact functors denoted similarly between the mixed derived categories.
\begin{prop}[cf.~\cite{AR-II}, Proposition~2.3] \label{prop:recollement}
 Assume that $X$ satisfies condition {\bf (V)} or $\bk$ is a regular local ring (or both). Then the functors $i_*\colon \Dmixe(Z) \to \Dmixe(X)$ and $j^*\colon \Dmixe(X) \to \Dmixe(U)$ each admit left and right adjoints, giving the recollement diagram
 \[
  \xymatrix@C=1.5cm{
   \Dmixe(Z) \ar[r]|{i_*} &
   \Dmixe(X) \ar[r]|{j^*} \ar@/_1pc/[l]_{i^{*}} \ar@/^1pc/[l]^{i^!} &
   \Dmixe(U). \ar@/_1pc/[l]_{j_!} \ar@/^1pc/[l]^{j_*}
  }
 \]
\end{prop}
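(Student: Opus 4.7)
The strategy is to follow \cite[Proposition~2.3]{AR-II} closely, using the naive functors of Proposition~\ref{prop:naive-partial-recollement} together with the Hom short exact sequences of Proposition~\ref{prop:BMP-pseudo-dt}, the latter playing the role that parity-$\Ext$ vanishing plays in the parity-sheaf version. A preliminary reduction by induction on $|\cV(Z)|$ (using the purely formal fact that recollements compose) lets one assume that $Z = X_s$ is a single closed stratum.

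In this base case, $i_*$ and $j^*$ descend to $\Dmixe$ termwise, and $i^{[*]}, i^{[!]}$ likewise extend to exact functors $i^*, i^! \colon \Dmixe(X) \to \Dmixe(X_s)$. Here condition {\bf (V)} (or regular local $\bk$) is exactly what is needed to ensure that $i^{[!]}$ preserves BMP sheaves, so that $i^!$ lands in $\Dmixe(X_s)$. The abelian adjunctions $i^{[*]} \dashv i_* \dashv i^{[!]}$ of Proposition~\ref{prop:naive-partial-recollement} then yield $i^* \dashv i_* \dashv i^!$ on $\Dmixe$, along with the isomorphisms $i^* i_* \cong \id \cong i^! i_*$.

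The substantive step is the construction of $j_!$ and $j_*$. For $\cF \in \BMPe(U)$, I would choose a BMP extension $\tilde\cF \in \BMPe(X)$ with $j^*\tilde\cF = \cF$ (which exists by Theorem~\ref{thm:bmp-classification} for local $\bk$, and in general by building the extension vertex-by-vertex using the BMP axioms on $\{\ge s\}$), and set
\[
 j_!\cF := \bigl[\tilde\cF \xrightarrow{\varepsilon} i_*i^{[*]}\tilde\cF\bigr], \qquad j_*\cF := \bigl[i_*i^{[!]}\tilde\cF \xrightarrow{\eta} \tilde\cF\bigr]
\]
as two-term complexes in $\Kb\BMPe(X)$ with $\tilde\cF$ in cohomological degree $0$. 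These are arranged so that $j^*$ recovers $\cF$ and so that the associated fiber/cone triangles realize the expected $j_!j^* \to \id \to i_*i^*$ and $i_*i^! \to \id \to j_*j^*$. Independence of the choice of $\tilde\cF$ up to canonical isomorphism in $\Dmixe(X)$, and functoriality in $\cF$, are a direct consequence of Proposition~\ref{prop:BMP-pseudo-dt}: the short exact sequences compute Homs between extensions as Homs in $\BMPe(U)$ modulo a piece supported on $X_s$, which is exactly the data needed to glue any two choices into a canonical isomorphism. One then extends $j_!, j_*$ to all of $\Kb\BMPe(U)$ termwise, with the standard mapping-cylinder bookkeeping for morphisms of complexes.

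The remaining recollement data --- the adjunctions $j_! \dashv j^* \dashv j_*$ with $j^*j_! \cong \id \cong j^*j_*$, the vanishings $i^*j_! = 0 = i^!j_*$, and the fundamental distinguished triangles --- all follow from the explicit two-term definitions together with one further application of Proposition~\ref{prop:BMP-pseudo-dt}. The principal obstacle is the well-definedness and functoriality of $j_!$ and $j_*$, and this is exactly the point at which one needs both Proposition~\ref{prop:BMP-pseudo-dt} (to compare different BMP extensions) and condition {\bf (V)} or the regularity hypothesis on $\bk$ (to keep $i^{[!]}\tilde\cF$ inside $\BMPe(X)$ so that $j_*\cF$ indeed lies in $\Kb\BMPe(X) = \Dmixe(X)$).
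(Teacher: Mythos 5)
Your overall architecture matches the paper's: reduce to a single closed stratum, build $j_!$ and $j_*$ from two-term complexes involving the unit and counit of the naive adjunctions, and let Proposition~\ref{prop:BMP-pseudo-dt} do the work that parity-$\Ext$ vanishing does in~\cite{AR-II}. The $j_!$/$i^*$ half is essentially correct (your $[\tilde\cF \to i_*i^{[*]}\tilde\cF]$ is exactly the paper's $\cE^+$, and $i^{[*]}$ always preserves BMP sheaves on a stratum by (BMP1)), though the paper packages well-definedness more cleanly: rather than choosing extensions object-by-object and extending ``termwise,'' it defines the triangulated subcategory $\sD^+$ generated by the $\cE^+$, proves via a five-lemma argument that $j^*$ restricted to $\sD^+$ is fully faithful and essentially surjective, and sets $j_! = \iota\circ(j^*\circ\iota)^{-1}$. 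That device is what actually delivers functoriality on all of $\Kb\BMPe(U)$.

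The genuine gap is on the $j_*$/$i^!$ side. You assert that condition {\bf (V)} \emph{or} regularity of $\bk$ ensures that $i^{[!]}$ preserves BMP sheaves, so that $[i_*i^{[!]}\tilde\cF \to \tilde\cF]$ lies in $\Kb\BMPe(X)$. This is false in the regular local case: $(i^{[!]}\tilde\cF)^s$ is a submodule of the graded free module $\tilde\cF^s$ (the common kernel of the maps $\rho_{s,E}$ for $E \notin \cE(Z)$), and a submodule of a free module over $R = \Sym_\bk(\fh^*)$ need not be free --- this is precisely the failure of {\bf (V)} that the hypothesis is meant to circumvent, not repair. What regularity of $\bk$ buys is only that $i^{[!]}\tilde\cF$ admits a \emph{finite graded free resolution} $P_\bullet$; the paper accordingly replaces your two-term complex by the longer complex $i_*P_m \to \cdots \to i_*P_1 \to \tilde\cF$ (the object $\cE^-_{P_\bullet}$ of Lemma~\ref{lem:recollement-key-j_star}) and reruns the fully-faithful-plus-essentially-surjective argument on the subcategory these generate. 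Consequently your claim that $i^!$ is just the naive corestriction also fails without {\bf (V)}: in general $i^!$ must be extracted from the recollement triangle $i_*i^! \to \id \to j_*j^*$ rather than defined directly. Finally, in the inductive step for $j_*$ one needs Lemma~\ref{lem:V-open} to know that {\bf (V)} can be invoked after restricting to an open union of strata; this point is absent from your reduction.
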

We start with Lemmas~\ref{lem:recollement-key-j_shriek} and~\ref{lem:recollement-key-j_star} below, which isolate the only modification to the proof in~\cite{AR-II} in the key case when $Z$ is a closed stratum. Then we sketch the rest of the proof, following~\cite{AR-II}.

So assume that $Z = X_s$ is a closed stratum. First consider $j_!$ and $i^*$. For any $\cE \in \BMPe(X)$, denote by $\cE^+ \in \Dmixe(X)$ the image of the complex
\[
  \cE \xrightarrow{\varepsilon}  i_*i^{[*]}\cE
\]
in degrees 0 and 1. Let $\sD^+$ be the full triangulated subcategory of $\Dmixe(X)$ generated by $\cE^+$ for all $\cE \in \BMPe(X)$, and let $\iota \colon \sD^+ \to \Dmixe(X)$ be the inclusion.
\begin{lem} \label{lem:recollement-key-j_shriek}
 \begin{enumerate}
  \item For any $\cF \in \sD^+$ and $\cG \in \Dmixe(X)$, the morphism
  \[
   \Hom_{\Dmixe(X)}(\iota\cF, \cG) \to \Hom_{\Dmixe(U)}(j^*\iota\cF, j^*\cG)
  \]
  induced by $j^*$ is an isomorphism.
  \item The composition $j^* \circ \iota\colon \sD^+ \to \Dmixe(U)$ is essentially surjective.
 \end{enumerate}
\end{lem}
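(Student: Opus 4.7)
The plan is to prove (1) by a dévissage reducing to a base-case chain-level calculation, and to deduce (2) from (1) together with a lifting of BMP sheaves from $U$ to $X$.

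For (1), I fix $\cE \in \BMPe(X)$ and consider the full subcategory of $\cG \in \Dmixe(X)$ for which $j^*\colon \Hom(\cE^+, \cG) \to \Hom(j^*\cE^+, j^*\cG)$ is an isomorphism. This subcategory is closed under shifts, Tate twists, and cones (the latter by the five lemma applied to the long exact sequences of $\Hom$ associated to distinguished triangles), hence is triangulated; since BMP sheaves generate $\Dmixe(X)$, it suffices to verify the iso for the base case $\cG = \cG_0[k]$ with $\cG_0 \in \BMPe(X)$. Unwinding $\cE^+$ as the two-term complex $\cE \xrightarrow{\varepsilon} i_*i^{[*]}\cE$, I compute both sides directly from chain maps modulo homotopy: for $k = 0$, the group $\Hom_{\Dmixe(X)}(\cE^+, \cG_0)$ is $\Hom_{\She(X)}(\cE, \cG_0)$ quotiented by the image of composition with $\varepsilon$, which the first short exact sequence of Proposition~\ref{prop:BMP-pseudo-dt} identifies via $j^*$ with $\Hom_{\BMPe(U)}(j^*\cE, j^*\cG_0)$; for $k = -1$, the group consists of $\varphi\colon i_*i^{[*]}\cE \to \cG_0$ with $\varphi \circ \varepsilon = 0$, which vanishes by the injectivity of the corresponding map $\alpha$ in the same SES (applied with $i_*i^{[*]}\cE$ in place of $\cE$); for all other $k$, both sides vanish by degree considerations. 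A parallel triangulated-subcategory argument in the variable $\cF$ then extends the isomorphism from $\cE^+$ to all of $\sD^+$.

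For (2), I lift each $\cE \in \BMPe(U)$ to some $\tilde\cE \in \BMPe(X)$ with $j^*\tilde\cE \cong \cE$. When $\bk$ is local, Theorem~\ref{thm:bmp-classification} decomposes $\cE$ into shifts of indecomposable $\cE_y^U$ for $y \in \cV(U)$, each of which equals $j^*\cE_y$ for the corresponding indecomposable BMP sheaf $\cE_y$ on $X$; the key point here is that $j^*\cE_y$ has the correct support and normalization, and that its endomorphism ring, being a quotient of the local ring $\End(\cE_y)$ via the SES of Proposition~\ref{prop:BMP-pseudo-dt}, remains local. Under condition {\bf (V)}, the lift is built directly by a Braden--MacPherson-style recipe: set $\tilde\cE|_U = \cE$, define $\tilde\cE^E = \cE^y/\alpha(E)\cE^y$ on each edge $E\colon s\text{---}y$ adjacent to $s$ (forced by (BMP2)), and take $\tilde\cE^s$ to be a graded free cover of $\im(u_s^{\tilde\cE})$. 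Either way $j^*\iota(\tilde\cE^+) \cong j^*\tilde\cE \cong \cE$, placing every BMP sheaf on $U$ in the essential image of $j^* \circ \iota$; together with the full faithfulness from (1) and the standard fact that a fully faithful triangulated functor whose essential image contains a generating set of the target is essentially surjective, this yields (2).

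The main obstacle is the base-case calculation in (1), specifically the identification of $\Hom$-groups in $\Kb\BMPe(X)$ with the quotients and kernels of Proposition~\ref{prop:BMP-pseudo-dt}. This requires verifying that $i_*i^{[*]}\cE$ is itself a BMP sheaf; since $s$ is minimal in the closed stratum $Z = X_s$, $i_*i^{[*]}\cE$ is effectively a skyscraper at $s$ with no nontrivial edge data, and all of the conditions (BMP1)--(BMP4') hold essentially tautologically.
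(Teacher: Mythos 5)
Your proof is correct and follows essentially the same route as the paper: a five-lemma d\'evissage to the case of $\cE^+$ against a shifted BMP sheaf, explicit identification of the homotopy-category $\Hom$ groups in degrees $0$ and $-1$ with the cokernel-of-$\alpha$ and kernel-of-$\alpha$ descriptions furnished by Proposition~\ref{prop:BMP-pseudo-dt}, vanishing elsewhere for degree reasons, and lifting BMP sheaves from $U$ to $X$ for essential surjectivity. One small slip: in the $k=-1$ case the vanishing follows from the injectivity of $\alpha$ in the SES for the \emph{original} $\cE$ (which says exactly that $\varphi\circ\varepsilon=0$ forces $\varphi=0$), not from the SES with $i_*i^{[*]}\cE$ substituted for $\cE$ (that substitution makes $\varepsilon$ the identity and yields no information).
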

\begin{proof}
 \begin{enumerate}
  \item By a standard five-lemma argument, this reduces to showing that
  \[
   \Hom_{\Dmixe(X)}(\iota\cE^+, \cG[n]) \to \Hom_{\Dmixe(U)}(j^*\iota\cE^+, j^*\cG[n])
  \]
  is an isomorphism for all $\cE, \cG \in \BMPe(X)$ and $n \in \Z$. If $n \notin \{-1, 0\}$, both sides are zero. The case $n = -1$ and $n = 0$ follow from Proposition~\ref{prop:BMP-pseudo-dt}. Indeed, for $n = -1$, the right hand side is zero because $j^*i_*i^{[*]}\cE = 0$, and the left hand side is 0 by the injectivity of $\alpha$ in Proposition~\ref{prop:BMP-pseudo-dt}. For $n = 0$, $\ker\gamma \subset \im\alpha$ gives the injectivity, while $\gamma$ surjective implies surjectivity.
  
  \item For any $\cF \in \BMPe(U)$, there exists $\cE \in \BMPe(X)$ such that $j^*\cE = \cF$. Now observe that $j^*\cE^+ = j^*\cE$. \qedhere
 \end{enumerate}
\end{proof}

Still assuming that $Z = X_s$, now consider $j_*$ and $i^!$. If $X$ satisfies {\bf (V)}, then for any $\cE \in \BMPe(X)$, we let $\cE^- \in \Dmixe(X)$ be the image of the complex
\[
  i_*i^{[!]}\cE \xrightarrow{\eta} \cE
\]
in degrees $-1$ and $0$, and Lemma~\ref{lem:recollement-key-j_shriek} dualizes. So suppose not. In this case, $i_*i^{[!]}\cE$ may no longer be BMP, so we need to modify $\cE^-$ using a resolution to obtain an object in $\Dmixe(X)$. First, since $\BMPe(X)$ is a full subcategory of $\She(X)$, we may view $\Dmixe(X)$ as a full subcategory of $\Kb\She(X)$. For any $\cE \in \BMPe(X)$ and any finite resolution
\[
  P_\bullet\colon 0 \to P_m \xrightarrow{f_m} \cdots \xrightarrow{f_3} P_2 \xrightarrow{f_2} P_1 \xrightarrow{f_1} i^{[!]}\cE \to 0
\]
of $i^{[!]}\cE \in \She(X_s)$ by BMP sheaves (i.e.~graded free $R$-modules of finite rank), let $\cE^-_{P_\bullet} \in \Dmixe(X)$ be the image of the complex
\[
 i_*P_m \xrightarrow{i_*f_m} \cdots \xrightarrow{i_*f_3} i_*P_2 \xrightarrow{i_*f_2} i_*P_1 \xrightarrow{\eta \circ (i_*f_1)} \cE,
\]
where $\cE$ is in degree 0. Let $\sD^-$ be the full triangulated subcategory of $\Dmixe(X)$ generated by all such $\cE^-_{P_\bullet}$, and let $\iota\colon \sD^- \to \Dmixe(X)$ be inclusion. The following result is dual to Lemma~\ref{lem:recollement-key-j_shriek}.
\begin{lem} \label{lem:recollement-key-j_star}
 \begin{enumerate}
  \item For any $\cF \in \Dmixe(X)$ and $\cG \in \sD^-$, the morphism
  \[
   \Hom_{\Dmixe(X)}(\cF, \iota \cG) \to \Hom_{\Dmixe(U)}(j^*\cF, j^*\iota\cG)
  \]
  induced by $j^*$ is an isomorphism.
  \item The composition $j^* \circ \iota\colon \sD^- \to \Dmixe(U)$ is essentially surjective.
 \end{enumerate}
\end{lem}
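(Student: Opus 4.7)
The plan is to dualize the proof of Lemma~\ref{lem:recollement-key-j_shriek}, substituting the second short exact sequence of Proposition~\ref{prop:BMP-pseudo-dt} (the one involving $\eta$ and $\beta$) for the first. For part~(1), I would first reduce by a standard five-lemma argument in $\sD^-$ to the case $\cG = \cE^-_{P_\bullet}$ for some $\cE \in \BMPe(X)$ and chosen resolution $P_\bullet$; a second five-lemma reduction in $\Dmixe(X) = \Kb\BMPe(X)$ reduces $\cF$ to a single BMP sheaf $\cF_0 \in \BMPe(X)$ placed in degree $-n$. The target becomes
\[
 \Hom_{\Dmixe(X)}(\cF_0[n], \cE^-_{P_\bullet}) \simto \Hom_{\Dmixe(U)}(j^*\cF_0[n], j^*\cE^-_{P_\bullet}).
\]

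The core computation runs as follows. Each term of $\cE^-_{P_\bullet}$ lies in $\BMPe(X)$: the terminal $\cE$ by assumption, and each $i_*P_k$ because the stratum $X_s$ being closed forces $s$ to be minimal, so that (BMP2) at edges incident to $s$ is automatic. Applying the second sequence of Proposition~\ref{prop:BMP-pseudo-dt} termwise therefore yields a short exact sequence of $\bk$-module complexes
\[
 0 \to \Hom(\cF_0, i_*i^{[!]}\cE^-_{P_\bullet})^\bullet \to \Hom(\cF_0, \cE^-_{P_\bullet})^\bullet \to \Hom(j^*\cF_0, j^*\cE^-_{P_\bullet})^\bullet \to 0,
\]
and hence a long exact sequence on cohomology. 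The rightmost complex is concentrated in degree $0$ with value $\Hom(j^*\cF_0, j^*\cE)$ (all other terms vanish since $j^*i_* = 0$), which is exactly the right-hand side of the desired isomorphism. The leftmost complex identifies, via the adjunction $i^{[*]} \dashv i_*$ together with the facts that $i^{[!]}i_* = \id$ and $i^{[!]}\eta = \id$, with $\Hom_{\She(X_s)}(i^{[*]}\cF_0, i^{[!]}\cE^-_{P_\bullet})^\bullet$, where $i^{[!]}\cE^-_{P_\bullet}$ is precisely the augmented resolution $P_m \to \cdots \to P_1 \to i^{[!]}\cE$. This augmented complex is exact by the choice of $P_\bullet$, and since $i^{[*]}\cF_0$ is a BMP sheaf on the single-vertex stratum $X_s$---i.e.~a graded free, hence projective, $R$-module---the functor $\Hom_R(i^{[*]}\cF_0, -)$ preserves exactness. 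Thus the leftmost complex is acyclic, the long exact sequence collapses, and the isomorphism follows in every degree $n$.

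For part~(2), essential surjectivity reduces by the triangulated structure on both sides to producing, for each $\cF \in \BMPe(U)$, a lift $\cE \in \BMPe(X)$ with $j^*\cE \cong \cF$; such a lift exists by standard BMP theory (via Theorem~\ref{thm:bmp-classification} when $\bk$ is local, or by a direct Braden--MacPherson extension across the added bottom stratum in general). For any resolution $P_\bullet$ of $i^{[!]}\cE$ one then has $j^*\cE^-_{P_\bullet} = j^*\cE \cong \cF$. The principal obstacle---and the whole reason this dual case requires resolutions rather than the simple two-term complex used in the proof of Lemma~\ref{lem:recollement-key-j_shriek}---is guaranteeing the existence of a \emph{finite} $P_\bullet$ by graded free $R$-modules of finite rank. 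This is exactly where the hypothesis that $\bk$ is a regular local ring enters: $R = \Sym_\bk(V^*)$ is then regular of finite global dimension, and $i^{[!]}\cE$ is finitely generated over $R$ by (BMP1) for $\cE$, so admits such a resolution.
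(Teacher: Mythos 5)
Your proof is correct and follows essentially the same route as the paper's: a five-lemma reduction to $\cF_0[n]$ versus $\cE^-_{P_\bullet}$, then the adjunction $i^{[*]}\dashv i_*$ together with the projectivity of $i^{[*]}\cF_0$ and the second exact sequence of Proposition~\ref{prop:BMP-pseudo-dt}, with the regular local ring hypothesis entering exactly where you say it does in part~(2). Your packaging of the computation as a termwise short exact sequence of Hom complexes (whose left term is $\Hom(i^{[*]}\cF_0,-)$ applied to the exact augmented resolution) is just a uniform way of organizing the paper's separate treatment of the cases $n\neq 0$ and $n=0$, not a different argument.
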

\begin{proof}[Proof of Lemma~\ref{lem:recollement-key-j_star}]
 \begin{enumerate}
  \item Again by the five lemma, we reduce to showing that
  \[
   \Hom_{\Dmixe(X)}(\cF[n], \iota\cE^-_{P_\bullet}) \to \Hom_{\Dmixe(U)}(j^*\cF[n], j^*\iota\cE^-_{P_\bullet})
  \]
  is an isomorphism for all $\cF \in \BMPe(X)$, $\cE^-_{P_\bullet}$, and $n \in \Z$. Since $j^*\iota \cE^-_{P_\bullet} \cong j^*\cE$ lives in degree 0, the right hand side is zero for $n \neq 0$, and the left hand side is also zero since $i^{[*]} \dashv i_*$ and $i^{[*]}\cF$ is projective. For $n = 0$, use Proposition~\ref{prop:BMP-pseudo-dt} as before.
  \item Let $\cE \in \BMPe(X)$. Since $\bk$ is a regular local ring, $i^{[!]}\cE$ has some finite free resolution $P_\bullet$. Then $j^*\cE^-_{P_\bullet} = j^*\cE$. \qedhere
 \end{enumerate}
\end{proof}

We sketch the rest of the proof, which is now exactly as in~\cite{AR-II}.
\begin{proof}[Proof of Proposition~\ref{prop:recollement}]
 We construct $j_!$ and $i^*$. First, in the case $Z = X_s$, Lemma~\ref{lem:recollement-key-j_shriek} implies that $j^* \circ \iota$ is an equivalence. Define
 \[
  j_! := \iota \circ (j^* \circ \iota)^{-1} \colon \Dmixe(U) \to \Dmixe(X).
 \]
 Then $j^*j_! \cong \id$. Lemma~\ref{lem:recollement-key-j_shriek} also implies that
 \[
  \Hom_{\Dmixe(X)}(j_!\cF, \cG) \to \Hom_{\Dmixe(U)}(j^*j_!\cF, j^*\cG) \cong \Hom_{\Dmixe(U)}(\cF, j^*\cG)
 \]
 is an isomorphism for all $\cF, \cG \in \Dmixe(X)$, so $j_! \dashv j^*$. Now, in the setting of Proposition~\ref{prop:recollement}, define $j_!$ by induction on the number of strata in $Z$. It can be shown that, for any $\cF \in \Dmixe(X)$, there exist unique objects $\cF' \in \Dmixe(U)$ and $\cF'' \in \Dmixe(Z)$ and a unique distinguished triangle
 \[
  j_!\cF' \to \cF \to i_*\cF'' \to.
 \]
 Then necessarily $j^*\cF \cong \cF'$, and $i^*$ can be uniquely defined up to isomorphism by setting $i^*\cF = \cF''$.
 
 The argument for $j_*$ and $i^!$ is entirely dual; when defining $j_*$ inductively,\\Lemma~\ref{lem:V-open} ensures that we can invoke {\bf (V)} at each step, for adding one closed stratum.
\end{proof}
\begin{rmk} \label{rmk:restriction-two-strata}
 If the naive restriction $i^{[*]}\colon \She(X) \to \She(Z)$ preserves BMP sheaves, it induces a functor $i^{[*]}\colon \Dmixe(X) \to \Dmixe(Z)$ that is left adjoint to $i_*$, so $i^* \cong i^{[*]}$. The same is true for $i^{[!]}$; in particular, if $X$ satisfies {\bf (V)}, then $i_s^! \cong i_s^{[!]}$ for all $s \in \cV(X)$.
 
 The statement for $i^{[*]}$ applies for example when every connected component of the underlying graph of $Z$ consists of one or two strata. Indeed, if $\cF \in \BMPe(X)$, then $i^{[*]}\cF$ clearly satisfies (BMP1), (BMP2), and (BMP4). We may check (BMP3) on each connected component: there is nothing to check if it has one stratum, while for a component $\{s, t\}$ with $s \le t$, (BMP3) is just the surjectivity of $\Gamma(\{s, t\}, \cF) \to \cF^t$, which follows from (BMP4).
\end{rmk}
Achar--Riche next define these functors for locally closed inclusions by composition \cite[\S2.5]{AR-II}; their results transfer to our setting with the same proof (but using the duality in recollement rather than Verdier duality). Thus we have the full assortment of $*$- and $!$-type pushforward and pullback functors between locally closed unions of ``strata'' of an arbitrary moment graph, satisfying the usual composition and adjunction properties.

\subsection{Proper smooth stratified morphisms}
\label{ss:pss}

Let $X, X'$ be moment graphs over the same free $\bk$-module $\fh$.
\begin{defn}
 A \emph{morphism of moment graphs} $f\colon X \to X'$ is a set-map
 \[
  f\colon \cV(X) \to \cV(X')
 \]
 such that for any edge $E\colon s\text{---}t$ in $X$, either $f(s) = f(t)$, or $f(s)$ and $f(t)$ are connected by an edge labeled $\alpha_X(E)$.
\end{defn}
For a stratum $X'_s$ ($s \in \cV(X')$), denote by $f^{-1}(X'_s)$ the sub-moment graph of $X$ corresponding to $f^{-1}(s) \subset \cV(X)$.
\begin{defn} \label{def:pullback}
 Given a morphism $f\colon X \to X'$, we define the \emph{pullback functor}
 \[
  f^*\colon \She(X') \to \She(X).
 \]
 Given $\cF \in \She(X')$, $f^*\cF \in \She(X)$ is defined as follows. For $s \in \cV(X)$, set
 \[
  (f^*\cF)^s = \cF^{f(s)}.
 \]
 Let $E\colon s\text{---}t$ be an edge in $X$. If $f(s) = f(t)$, then set
 \[
  (f^*\cF)^E = \cF^{f(s)}/\alpha(E) = \cF^{f(t)}/\alpha(E),
 \]
 and let $\rho^{f^*(\cF)}_{s, E}$, $\rho^{f^*(\cF)}_{t, E}$ be the quotient map. Otherwise, there is an edge\\$E'\colon f(s)\text{---}f(t)$, and we set
 \[
  (f^*\cF)^E = \cF^{E'}, \qquad \rho^{f^*\cF}_{s, E} = \rho^\cF_{f(s), E'}, \qquad \rho^{f^*\cF}_{t, E} = \rho^\cF_{f(t), E'}.
 \]
\end{defn}
\begin{defn} \label{defn:pss}
 Let $d \in \Z$. A morphism $f\colon X \to X'$ is called \emph{PSS-$d$} if it satisfies the following conditions:
 \begin{enumerate}
  \item $f^*\colon \She(X') \to \She(X)$ restricts to $f^*\colon \BMPe(X') \to \BMPe(X)$;
  \item the induced exact functor $f^*\colon \Dmixe(X') \to \Dmixe(X)$ admits an exact right adjoint $f_*\colon \Dmixe(X) \to \Dmixe(X')$, and $f^*$ is left adjoint to $f^*\{2d\}$. (That is, $f^*$ admits a biadjoint up to shift.)
 \end{enumerate}
 If $f$ is PSS-$d$, we set $f_! = f_*$ and $f^! = f^*\{2d\}$.
\end{defn}
This is meant to be an analogue of proper smooth stratified (in the sense of~\cite[\S2.6]{AR-II}) of relative dimension $d$. Note that we do not define analogues of proper, smooth, or stratified morphisms individually.

\begin{prop}[cf.~\cite{AR-II}, Proposition~2.8] \label{prop:pss}
 Let $f\colon X \to X'$ be PSS-$d$. Let $h'\colon Y' \hookrightarrow X'$ be a locally closed inclusion. Consider the following diagram:
 \[
  \xymatrix{
   f^{-1}(Y') \ar@{^(->}[r]^-h \ar[d]_f & X \ar[d]^f \\
   Y' \ar@{^(->}[r]_{h'} & X'.
  }
 \]
 Then
 \begin{enumerate}
  \item $f\colon f^{-1}(Y') \to Y'$ is PSS-$d$;
  \item we have the following natural isomorphisms of functors:
  \[
   \begin{array}{ccccccc}
    f_*h_* &\cong& h_*f_*,& & f_*h_! &\cong& h_!f_*, \\
    h^*f^* &\cong& f^*h^*,& & h^!f^* &\cong& f^*h^*, \\
    f^*h_* &\cong& h_*f^*,& & f^*h_! &\cong& h_!f^*, \\
    h^*f_* &\cong& f_*h^*,& & h^!f_* &\cong& f_*h^!.
   \end{array}
  \]
 \end{enumerate}
\end{prop}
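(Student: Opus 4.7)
Any locally closed inclusion factors as a closed followed by an open inclusion (or vice versa), and the eight isomorphisms to be proved behave well under composition of $h'$, so I would first reduce to the two extreme cases where $h'$ is either a closed inclusion $i'\colon Z' \hookrightarrow X'$ or an open inclusion $j'\colon U' \hookrightarrow X'$. In each case I plan to establish a single base change isomorphism at the level of moment graph sheaves, deduce part~(1) from it, and then derive the remaining seven isomorphisms in part~(2) formally by taking left and right adjoints.

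The closed case carries all the content. Unraveling Definition~\ref{def:pullback} together with the extension-by-zero formula from the proof of Proposition~\ref{prop:naive-partial-recollement}, one checks directly that
\[
 f^* \circ i'_* \;\cong\; i_* \circ f^* \colon \She(Z') \longrightarrow \She(X),
\]
where $i\colon Z := f^{-1}(Z') \hookrightarrow X$ is the pulled-back inclusion. Both sides send $\cF$ to the sheaf on $X$ supported on $Z$ whose $s$-stalk is $\cF^{f(s)}$ for $s \in \cV(Z)$; the only mild subtlety is that edges between $\cV(Z)$ and its complement vanish on both sides, which uses that $Z = f^{-1}(Z')$ is closed. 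Granted this identity, BMP preservation for $f\colon Z \to Z'$ is automatic: for $\cF \in \BMPe(Z')$ the sheaf $i'_*\cF$ is BMP on $X'$, hence $i_*(f^*\cF) \cong f^*(i'_*\cF)$ is BMP on $X$ by the PSS-$d$ property of $f\colon X \to X'$, forcing $f^*\cF \in \BMPe(Z)$.

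For the adjoints required in Definition~\ref{defn:pss}(2), I would take the candidates $f_*^Z := i'^* \circ f_* \circ i_*$ on $\Dmixe(Z)$ and verify $f^* \dashv f_*^Z$ and $f_*^Z \dashv f^*\{2d\}$ by combining the recollement adjunctions $i^* \dashv i_* \dashv i^!$ from Proposition~\ref{prop:recollement}, the adjunction structure of $f$ on the full graphs, and the derived upgrade of $f^* i'_* \cong i_* f^*$. The open case $h' = j'$ is analogous and simpler: since $j^* = j^!$ is literal restriction of sheaves, $j^* \circ f^* \cong f^* \circ j'^*$ is immediate at the sheaf level, BMP preservation follows by extending through the closed complement, and the adjoints are constructed using $j_!$ and $j_*$ on the full graphs.

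The remaining isomorphisms in part~(2) are then formal consequences of the two sheaf-level base change identities, obtained by passing to left and right adjoints on each side. The step I expect to be the main obstacle is the careful verification that the candidate functor $i'^* f_* i_*$ is genuinely exact and furnishes the claimed right adjoint and biadjoint-up-to-shift structure on the restricted categories, rather than merely on the full graphs; this reduces to a diagram chase combining the recollement adjunctions with the sheaf-level base change and the biadjointness of $f^*$, following~\cite[Proposition~2.8]{AR-II} essentially verbatim once the base change is in place.
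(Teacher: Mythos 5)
Your sheaf-level observations are correct and match the paper's: the underived base changes $f^*i'_* \cong i_*f^*$ and $j^*f^* \cong f^*j'^*$, and the deduction that $f^*$ preserves BMP sheaves on preimages of locally closed unions of strata. The gap is precisely in the step you defer to a ``diagram chase.'' Your two candidate adjoints do not automatically agree: adjunction applied to $f^*i'_* \cong i_*f^*$ gives $f^* \dashv i'^{!}f_*i_*$ on one side and $i'^{*}f_*i_* \dashv f^*\{2d\}$ on the other, and these two functors coincide exactly when $j'^*f_*i_* = 0$, i.e.\ when $f_*$ carries objects supported on $Z$ to objects supported on $Z'$. That statement is not a formal consequence of the sheaf-level identities, the recollement adjunctions, and the biadjointness of $f^*$ on the full graphs. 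The paper obtains it from Lemma~\ref{lem:commute-j-shriek}, the \emph{derived} base change $f^*j'_! \cong j_!f^*$, whose proof needs genuinely new input: the description of the essential image of $j_!$ as generated by the two-term complexes $\cE^+$ (from Lemma~\ref{lem:recollement-key-j_shriek} and the construction of $j_!$), the compatibility $f^*\cE^+ \cong (f^*\cE)^+$, and the short exact sequences of Proposition~\ref{prop:BMP-pseudo-dt}. Your fallback of following~\cite[Proposition~2.8]{AR-II} ``essentially verbatim'' is exactly what the paper says cannot be done here: Achar--Riche invoke proper base change at the level of the ordinary (non-mixed) derived category, which has no analogue for moment graphs, since $f_*$ exists only abstractly as an adjoint on $\Kb\BMP$.

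A secondary point: for part~(2), not all eight isomorphisms arise by passing to adjoints from the two sheaf-level identities. In the paper, $f^*i^* \cong i^*f^*$ and $f^*i^! \cong i^!f^*$ are deduced from $f^*j_! \cong j_!f^*$ and $f^*j_* \cong j_*f^*$ via the recollement gluing criterion of Lemma~\ref{lem:morphism-of-recollement} (from~\cite{PS}); that criterion cannot be applied with $F = f_*$, which is why the $f_*$-squares must instead be reached by adjunction from the $f^*$-squares once part~(1) is established. So the argument needs both Lemma~\ref{lem:commute-j-shriek} and the recollement lemma of~\cite{PS} as explicit ingredients, and neither appears in your plan.
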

\begin{rmk}
 In fact, if $f^*\colon \BMPe(X') \to \BMPe(X)$ admits a biadjoint up to shift before passing to the homotopy category, then it is easy to see that $f^*\colon \BMPe(Y') \to \BMPe(f^{-1}(Y'))$ does also.
\end{rmk}
We need to argue differently from~\cite{AR-II} since we do not have a proper base change theorem at the non-mixed level. We need two preliminary lemmas.

Let $f\colon Y \to Y'$ be PSS-d. Consider the diagram
\begin{align*}
 \xymatrix@C=1.5cm{
  Z \ar@{^(->}[r]^i \ar[d]_f & Y \ar[d]_f & U \ar@{_(->}[l]_j \ar[d]_f \\
  Z' \ar@{^(->}[r]_i & Y' & U' \ar@{_(->}[l]^j
 }
\end{align*}
where $i\colon Z' \hookrightarrow Y'$ is a closed inclusion with open complement $j\colon U' \hookrightarrow Y'$, and $Z$ and $U$ are full preimages under $f$. Then we have the diagram
\begin{equation} \label{diag:pss-recollement}
 \begin{array}{c}\xymatrix@C=1.5cm{
  \Dmixe(Z) \ar[r] & \Dmixe(Y) \ar[r] \ar@<-0.6ex>[d]_{f_*} \ar@<0.6ex>[l] \ar@<-0.6ex>[l] & \Dmixe(U) \ar@<0.6ex>[l] \ar@<-0.6ex>[l] \\
  \Dmixe(Z') \ar[r] \ar@<-0.6ex>[u]_{f^*} & \Dmixe(Y') \ar[r] \ar@<-0.6ex>[u]_{f^*} \ar@<0.6ex>[l] \ar@<-0.6ex>[l] & \Dmixe(U') \ar@<-0.6ex>[u]_{f^*} \ar@<0.6ex>[l] \ar@<-0.6ex>[l]
 }\end{array}
\end{equation}
where the rows are recollement diagrams. To show (1), it suffices in this set-up to construct a biadjoint $f_*$ up to shift for the left and right hand columns. We will use the following lemmas.
\begin{lem} \label{lem:commute-j-shriek}
 In this set-up, $f^*j_! \cong j_!f^*$.
\end{lem}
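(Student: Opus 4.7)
The plan is to construct a natural transformation $\alpha\colon j_! f^* \to f^* j'_!$ as the mate of the evident sheaf-level isomorphism $j^* f^* \cong f^* (j')^*$ (immediate from Definition~\ref{def:pullback} applied to open inclusions), and then show $\alpha_F$ is an isomorphism by induction on $n := |\cV(Y') \setminus \cV(U')|$, the number of strata in the closed complement $Z' := Y' \setminus U'$.

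For the base case $n = 1$, write $Z' = \{s'\}$ with closed inclusion $i'\colon Z' \hookrightarrow Y'$, and set $Z = f^{-1}(Z')$ with closed inclusion $i\colon Z \hookrightarrow Y$; note $Z$ may be multi-stratum when $f$ is many-to-one on orbits. By Lemma~\ref{lem:recollement-key-j_shriek}, $(j')_!(j')^* \cE \cong \cE^+ = [\cE \xrightarrow{\varepsilon} (i')_* (i')^{[*]} \cE]$ in degrees $0$ and $1$ for $\cE \in \BMPe(Y')$. Applying $f^*$ termwise and using the sheaf-level commutations $f^* (i')_* \cong i_* f^*$ and $f^* (i')^{[*]} \cong i^{[*]} f^*$ (straightforward from the definitions), one obtains $f^* \cE^+ \cong [f^* \cE \xrightarrow{\varepsilon} i_* i^{[*]} f^* \cE]$. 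The key technical point, where the PSS-$d$ hypothesis does real work, is that $i^{[*]} f^* \cE \in \BMPe(Z)$ \emph{even when $Z$ is multi-stratum}: the sheaf $f^*((i')_* (i')^{[*]} \cE)$ lies in $\BMPe(Y)$ by PSS-$d$ and is visibly supported on $Z$, and a direct check shows that a BMP sheaf on $Y$ supported on a closed sub-moment graph restricts via $i^{[*]}$ to a BMP sheaf on that sub-moment graph---axioms (BMP3) and (BMP4) transfer because any open set $\cJ \subset Z$ extends to an open set $\cJ \cup (Y \setminus Z)$ of $Y$. Consequently $i^* f^* \cE \cong i^{[*]} f^* \cE$, and the recollement triangle for $f^* \cE$ identifies $j_! j^* f^* \cE$ with the same two-term complex $[f^* \cE \to i_* i^{[*]} f^* \cE]$. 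This isomorphism matches $\alpha_{(j')^* \cE}$ (a short diagram chase), and the thick triangulated subcategory of $\Dmixe(U')$ on which $\alpha_F$ is an iso then equals all of $\Dmixe(U')$, since $(j')^*\colon \BMPe(Y') \to \BMPe(U')$ is essentially surjective (Theorem~\ref{thm:bmp-classification}).

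For the inductive step $n \ge 2$, choose a maximal element $s' \in Z'$; then $Z'_0 := Z' \setminus \{s'\}$ is closed in $Y'$ of size $n - 1$. Set $V' = Y' \setminus Z'_0$, $V = f^{-1}(V')$, and factor $j' = j'_2 \circ j'_1$, where $j'_1\colon U' \hookrightarrow V'$ has closed complement $\{s'\}$ and $j'_2\colon V' \hookrightarrow Y'$ has closed complement $Z'_0$; similarly $j = j_2 \circ j_1$. The inductive hypothesis applied to $(j'_2)_!$ yields $f^* (j'_2)_! \cong (j_2)_! (f|_V)^*$. The base-case argument---which used only that $f^*$ preserves BMP, not full PSS-$d$---then applies to $(f|_V)^* (j'_1)_!$, giving $(f|_V)^* (j'_1)_! \cong (j_1)_! (f|_U)^*$. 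Here $(f|_V)^*$ preserves BMP because any $\cE \in \BMPe(V')$ extends to $\tilde \cE \in \BMPe(Y')$ via Theorem~\ref{thm:bmp-classification}, $f^* \tilde \cE \in \BMPe(Y)$, and restriction along the open inclusion $V \hookrightarrow Y$ preserves BMP. Composing the two isomorphisms gives $f^* j'_! \cong j_! f^*$.

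The main obstacle is the BMP claim for $i^{[*]} f^* \cE$ in the multi-stratum case of the base case: it is precisely here that the PSS-$d$ hypothesis compensates for the missing proper base change theorem at the non-mixed level. Every subsequent step is a routine recollement manipulation or an extension of natural isomorphisms across a generating class.
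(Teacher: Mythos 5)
Your argument is correct, and its skeleton coincides with the paper's: reduce by factoring $j'$ to the case where the closed complement in $Y'$ is a single stratum $s'$, then exploit the explicit two-term complex $\cE^+=[\cE\to i'_*i'^{[*]}\cE]$ representing $j'_!(j')^*\cE$ together with the sheaf-level identity $f^*(i'_*i'^{[*]}\cE)\cong i_*i^{[*]}f^*\cE$, and finally extend from the generators $(j')^*\cE$ to all of $\Dmixe(U')$. Where you genuinely diverge is the finishing move in the single-stratum case. The paper never identifies $i^*f^*\cE$ with $i^{[*]}f^*\cE$: it shows that restriction along $j$ induces isomorphisms $\Hom(f^*\cE^+,\cG[n])\to\Hom(j^*f^*\cE^+,j^*\cG[n])$ by applying Proposition~\ref{prop:BMP-pseudo-dt} directly to the BMP sheaves $f^*\cE$ and $\cG$ on $Y$ (that proposition holds for the possibly multi-stratum closed subset $Z=f^{-1}(s')$) and concludes by a Yoneda argument; for this it only matters that $i_*i^{[*]}f^*\cE$ is BMP \emph{on $Y$}, which is immediate from PSS-$d$(1). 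You instead prove the stronger fact that $i^{[*]}f^*\cE$ is BMP \emph{on $Z$}---your observation that a BMP sheaf supported on a closed subgraph restricts to a BMP sheaf on that subgraph is correct and does not appear in the paper---and use it to identify the entire recollement triangle of $f^*\cE$ with $f^*$ of the triangle of $\cE$. This buys a more explicit, object-level statement at the cost of two steps you compress: the ``Consequently $i^*f^*\cE\cong i^{[*]}f^*\cE$'' needs the objectwise form of Remark~\ref{rmk:restriction-two-strata} (run the sheaf-level adjunction $i^{[*]}\dashv i_*$ through $\Hom_{\Kb}$ against all $\cG\in\Dmixe(Z)$ and invoke Yoneda, noting the units match), and the ``short diagram chase'' matching your mate $\alpha$ with this identification. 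Both are routine, your verification that $(f|_V)^*$ preserves BMP in the inductive step is the right point to check, so the proof stands.
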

\begin{proof}
 By factoring $j$, we reduce to the case that $Z'$ is a closed stratum $X_s$. It suffices to show that for any $\cF \in \Dmixe(U')$ and $\cG \in \Dmixe(X)$, the map
 \begin{align} \label{eqn:commute-j-shriek}
  \Hom_{\Dmixe(X)}(f^*j_!\cF, \cG) \to \Hom_{\Dmixe(U)}(j^*f^*j_!\cF, j^*\cG)
 \end{align}
 induced by $j^*$ is an isomorphism. Indeed, we would then get an isomorphism
 \begin{multline*}
  \Hom_{\Dmixe(X)}(f^*j_!\cF, \cG) \cong \Hom_{\Dmixe(U)}(j^*f^*j_!\cF, j^*\cG) \\
  \cong \Hom_{\Dmixe(U)}(f^*j^*j_!\cF, j^*\cG) \cong \Hom_{\Dmixe(U)}(f^*\cF, j^*\cG) \\
  \cong \Hom_{\Dmixe(X)}(j_!f^*\cF, \cG)
 \end{multline*}
 functorial in $\cF$ and $\cG$, and the result follows by a Yoneda-type argument.
 
 The proof that \eqref{eqn:commute-j-shriek} is an isomorphism is similar to the proof of Lemma~\ref{lem:recollement-key-j_shriek}. First, recall from Lemma~\ref{lem:recollement-key-j_shriek} and the proof of Proposition~\ref{prop:recollement} that the essential image of $j_!: \Dmixe(U') \to \Dmixe(X')$ is the full triangulated subcategory of $\Dmixe(X')$ generated by $\cE^+$ for all $\cE \in \BMPe(X')$. By the five lemma, it therefore suffices to show that
 \[
  \Hom_{\Dmixe(X)}(f^*\cE^+, \cG[n]) \to \Hom_{\Dmixe(U)}(j^*f^*\cE^+, j^*\cG[n])
 \]
 is an isomorphism for all $\cE \in \BMPe(X')$, $\cG \in \BMPe(X)$, $n \in \Z$. For this, note that $f^*i_*i^{[*]}\cE \cong i_*i^{[*]}f^*\cE$, and in fact $f^*\cE^+ \cong (f^*\cE)^+$. Now apply the first short exact sequence of Proposition~\ref{prop:BMP-pseudo-dt} to the BMP sheaves $f^*\cE$ and $\cG$.
\end{proof}
The next lemma is part of~\cite[Theorem~2.5]{PS}.
\begin{lem} \label{lem:morphism-of-recollement}
 Consider the diagram
 \[
  \xymatrix@C=1.5cm{
   \cD_Z \ar[r] \ar[d]_{F_Z} & \cD \ar[r] \ar[d]_F \ar@<0.6ex>[l] \ar@<-0.6ex>[l] & \cD_U \ar[d]_{F_U} \ar@<0.6ex>[l] \ar@<-0.6ex>[l] \\
   \cE_Z \ar[r] & \cE \ar[r] \ar@<0.6ex>[l] \ar@<-0.6ex>[l] & \cE_U \ar@<0.6ex>[l] \ar@<-0.6ex>[l]
  }
 \]
 of exact functors between triangulated categories, where the rows are recollement diagrams. (To simplify notation, we denote all of $F$, $F_Z$, $F_U$ by $F$, and write $i_*$, $j^*$, etc.~as usual for the functors in either recollement.) Assume that $Fi_* \cong i_*F$ and $Fj^* \cong j^*F$. Then $Fj_! \cong j_!F$ $\Leftrightarrow$ $Fi^* \cong i^*F$ and $Fj_* \cong j_*F$ $\Leftrightarrow$ $Fi^! \cong i^!F$.
\end{lem}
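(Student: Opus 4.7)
The plan is to invoke the two recollement distinguished triangles in $\cD$, namely $j_!j^*\cF \to \cF \to i_*i^*\cF \xrightarrow{+1}$ and its dual $i_*i^!\cF \to \cF \to j_*j^*\cF \xrightarrow{+1}$, together with the standard fact that the essential image of $j_!$ in a recollement is the full subcategory $\{\cF \in \cD : i^*\cF = 0\}$, on which the unit $j_!j^* \to \id$ is an isomorphism (and dually for $j_*$). The two equivalences in the lemma are formally dual, so I focus on the first, $Fj_! \cong j_!F \Leftrightarrow Fi^* \cong i^*F$.

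For the direction $Fi^* \cong i^*F \Rightarrow Fj_! \cong j_!F$: given $\cG \in \cD_U$, the identity $i^*j_! = 0$ combined with the hypothesis gives
\[
 i^*Fj_!\cG \cong Fi^*j_!\cG \cong F(0) = 0,
\]
so $Fj_!\cG$ lies in the essential image of $j_!$, whence $j_!j^*Fj_!\cG \simto Fj_!\cG$. Using also $Fj^* \cong j^*F$ and $j^*j_! \cong \id$, we obtain
\[
 Fj_!\cG \cong j_!j^*Fj_!\cG \cong j_!Fj^*j_!\cG \cong j_!F\cG,
\]
and naturality in $\cG$ is straightforward to verify.

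For the converse $Fj_! \cong j_!F \Rightarrow Fi^* \cong i^*F$: apply the exact functor $F$ to the distinguished triangle $j_!j^*\cF \to \cF \to i_*i^*\cF \xrightarrow{+1}$. Using the hypotheses $Fj_! \cong j_!F$, $Fj^* \cong j^*F$, and $Fi_* \cong i_*F$, the result is a distinguished triangle whose first two terms and first morphism agree (naturally in $\cF$) with those of the recollement triangle of $F\cF$, namely $j_!j^*F\cF \to F\cF \to i_*i^*F\cF \xrightarrow{+1}$. The axiom TR3 then produces an isomorphism $i_*Fi^*\cF \cong i_*i^*F\cF$ between the third terms, and applying $i^*$ together with $i^*i_* \cong \id$ yields $Fi^*\cF \cong i^*F\cF$.

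The second equivalence, $Fj_* \cong j_*F \Leftrightarrow Fi^! \cong i^!F$, follows by the entirely dual argument, using the triangle $i_*i^!\cF \to \cF \to j_*j^*\cF \xrightarrow{+1}$ and the essential image characterization of $j_*$ as $\{\cF : i^!\cF = 0\}$. The main obstacle is the naturality of the isomorphism produced by TR3 in the converse direction, since a cone is only determined up to non-canonical isomorphism; this has to be extracted from the functoriality of the recollement triangles themselves, which is where the care in \cite{PS} is invested. The rest of the proof is formal manipulation with the recollement axioms.
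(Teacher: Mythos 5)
The paper does not actually prove this lemma: it is quoted as part of \cite[Theorem~2.5]{PS}, so there is no in-paper argument to compare against. Your outline is the standard proof and is essentially correct. The direction $Fi^*\cong i^*F\Rightarrow Fj_!\cong j_!F$ is complete as written: $i^*Fj_!\cG\cong Fi^*j_!\cG=0$ places $Fj_!\cG$ in $\ker i^*=\operatorname{ess\,im}j_!$, where the counit $j_!j^*\to\id$ (it is the counit of $j_!\dashv j^*$, not the unit) is invertible, and the resulting chain of isomorphisms is a composite of natural maps.

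Two points in the converse need attention. First, the claim that ``the first two terms and first morphism agree'' presupposes that the isomorphism $Fj_!\cong j_!F$ intertwines the counits $j_!j^*\to\id$, i.e.\ that it is the canonical exchange morphism obtained by adjunction from $Fj^*\cong j^*F$; this is the intended reading of the hypothesis but should be made explicit. Second, the TR3 naturality issue you flag is real but closes by a standard uniqueness argument rather than anything delicate: since
\[
 \Hom\bigl(j_!j^*F\cF[1],\, i_*i^*F\cF\bigr)\cong \Hom\bigl(j^*F\cF[1],\, j^*i_*i^*F\cF\bigr)=0,
\]
the morphism completing the comparison of the two triangles is unique, and a unique completion is automatically natural. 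Alternatively, you can avoid TR3 altogether: apply the exact functor $i^*F$ to the triangle $j_!j^*\cF\to\cF\to i_*i^*\cF\to{}$; the first term becomes $i^*j_!Fj^*\cF=0$ by the hypothesis $Fj_!\cong j_!F$, so $i^*F$ of the unit $\cF\to i_*i^*\cF$ is an isomorphism $i^*F\cF\simto i^*Fi_*i^*\cF\cong Fi^*\cF$, natural by construction. With either repair, the dual equivalence for $j_*$ and $i^!$ goes through verbatim, as you say.
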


\begin{rmk}
 If $f$ has the additional property that the preimage of every stratum has at most two strata, then Lemma~\ref{lem:commute-j-shriek} has the following simpler proof. Again assume that $Z'$ is a closed stratum. By Remark~\ref{rmk:restriction-two-strata}, $i^*$ in either row is the naive restriction, so $f^*i^* \cong i^*f^*$. Then Lemma~\ref{lem:morphism-of-recollement} applied with $F = f^*$ implies $j_!f^* \cong f^*j_!$. This applies for example to the morphism $\pi^s\colon \cB \to \cP^s$ defined in~\S\ref{ss:bmp-sbim}.
\end{rmk}

\begin{proof}[Proof of Proposition~\ref{prop:pss}]
 \begin{enumerate}
  \item Assume the set-up of \eqref{diag:pss-recollement}. We will construct a biadjoint $f_*$ up to shift for the left and right hand columns. First consider the left hand column. Since $i_*$ is a full embedding, it is enough to show that $f_*\colon \Dmixe(X) \to \Dmixe(X')$ restricts to $\Dmixe(Z) \to \Dmixe(Z')$. So let $\cF \in \Dmixe(Z)$. To show that $f_*i_*\cF$ is supported on $Z'$, by recollement it suffices to show that $j^*f_*i_*\cF = 0$. This is true since, for any $\cG \in \Dmixe(U')$,
  \begin{multline*}
   \Hom_{\Dmixe(U')}(\cG, j^*f_*i_* \cF) \cong \Hom_{\Dmixe(Y)}(f^*j_!\cG, i_*\cF) \\
   \overset{\text{Lemma~}\ref{lem:commute-j-shriek}}{\cong} \Hom_{\Dmixe(Y)}(j_!f^*\cG, i_*\cF)
   \cong \Hom_{\Dmixe(U)}(f^*\cG, j^*i_*\cF) = 0.
  \end{multline*}
  
  Now consider the right hand column. Recall that in recollement, $j^*$ identifies $\Dmixe(U)$ as a quotient of $\Dmixe(Y)$ by $\Dmixe(Z)$ in an appropriate sense, and similarly for the bottom row. The previous paragraph therefore implies that $f_*\colon \Dmixe(Y) \to \Dmixe(Y')$ induces an exact functor $f_*\colon \Dmixe(U) \to \Dmixe(U')$ satisfying $j^*f_* \cong f_*j^*$.

  It remains to show the biadjunction up to shift for $U \to U'$. Since $j_!$ is fully faithful, this would follow from the same for $Y \to Y'$ if we knew that $j_!f^* \cong f^*j_!$ and $j_!f_* \cong f_*j_!$. The first isomorphism is Lemma~\ref{lem:commute-j-shriek}. Clearly $i_*f^* \cong i_*f^*$, and by adjunction for $Y \to Y'$ and $Z \to Z'$, we get $i^*f_* \cong f_*i^*$. This implies $j_!f_*\cong f_*j_!$ by Lemma~\ref{lem:morphism-of-recollement} applied with $F = f_*$.

  \item In the same set-up, we now have the diagram
  \[
   \xymatrix@C=1.5cm{
    \Dmixe(Z) \ar[r] \ar@<-0.6ex>[d]_{f_*} & \Dmixe(Y) \ar[r] \ar@<-0.6ex>[d]_{f_*} \ar@<0.6ex>[l] \ar@<-0.6ex>[l] & \Dmixe(U) \ar@<0.6ex>[l] \ar@<-0.6ex>[l] \ar@<-0.6ex>[d]_{f_*} \\
    \Dmixe(Z') \ar[r] \ar@<-0.6ex>[u]_{f^*} & \Dmixe(Y') \ar[r] \ar@<-0.6ex>[u]_{f^*} \ar@<0.6ex>[l] \ar@<-0.6ex>[l] & \Dmixe(U') \ar@<-0.6ex>[u]_{f^*} \ar@<0.6ex>[l] \ar@<-0.6ex>[l] 
   }
  \]
  where $f^*$ commutes up to natural isomorphism with $i_*$, $j^*$, and $j_!$, and $f^* \dashv f_* \dashv f^*\{2d\}$ in each column. (Note, however, that we no longer know any commutativity involving $f_*$.) To prove (2), it suffices to show that all squares here commute up to natural isomorphism. These are related as follows:
  \[
   \xymatrix@C=-0.7cm@R=0.5cm{
                                              & i_*f^* \cong i_*f^* \ar@{}[dl]|{\dashvdl} \ar@{}[dr]|{\dashvdr} &                                           \\
    f_*i^* \cong i^*f_*                       &                                                                 & f_*i^! \cong i^!f_*                       \\
    f_*j_! \cong j_!f_* \ar@{}[dr]|{\dashvdr} &                                                                 & f_*j_* \cong j_*f_* \ar@{}[dl]|{\dashvdl} \\
                                              & j^*f^* \cong f^*j^*                                             &
   }
   \hskip 1.5cm
   \xymatrix@C=-0.7cm@R=0.5cm{
                                              & i_*f_* \cong i_*f_* \ar@{}[dl]|{\dashvdl} \ar@{}[dr]|{\dashvdr} &                                           \\
    f^*i^* \cong i^*f^* \ar@{<=>}[d]          &                                                                 & f^*i^! \cong i^!f^* \ar@{<=>}[d]          \\
    f^*j_! \cong j_!f^* \ar@{}[dr]|{\dashvdr} &                                                                 & f^*j_* \cong j_*f^* \ar@{}[dl]|{\dashvdl} \\
                                              & j^*f_* \cong f_*j^*                                             &
   }
  \]
  Here, the two equivalences follow from Lemma~\ref{lem:morphism-of-recollement}, which however does not apply to $F = f_*$. In any case, the desired isomorphisms follow from the following ones: $i_*f^* \cong i_*f^*$, $j^*f^* \cong f^*j^*$, and $f^*j_! \cong j_!f^*$ (Lemma~\ref{lem:commute-j-shriek}). \qedhere
 \end{enumerate}
\end{proof}

\section{Mixed perverse sheaves}
\label{s:mixed-perverse-sheaves}

In this section, we assume for simplicity that $\bk$ is a field.

\subsection{Perverse t-structure}
\label{ss:perverse-t-structure}

Let $X$ be a moment graph over $\bk$. A BMP sheaf on a single stratum $X_s$ is simply a finite rank graded free $R$-module. Denote by $\ubk_{X_s} \in \BMPe(X_s)$ the regular module $R$. We call it the \emph{constant sheaf} on $X_s$.

From now on, we assume that $X$ is equipped with a \emph{dimension function} $s \mapsto d_s\colon \cV(X) \to \Z$. For each $s \in \cV(X)$, define the \emph{standard} and \emph{costandard} sheaves
\[
 \Delta_s := i_{s!}\ubk_{X_s}\{ d_s \}, \qquad \nabla_s := i_{s*}\ubk_{X_s}\{ d_s \}
\]
in $\Dmixe(X)$.

For a single stratum $X_s$, the perverse t-structure on $\Dmixe(X_s)$ will be analogous to the one in~\cite[\S6.7]{BL}. A BMP sheaf on $X_s$ is a finite rank graded free $R$-module. Let $\p D^{\le0}$ (resp.~$\p D^{\ge0}$) be the full subcategory of $\Dmixe(X_s)$ consisting of complexes that are homotopy equivalent to a complex $M$, where $M^i \cong \bigoplus_k R\{n_k\}$ with $n_k \ge i + d_s$ (resp.~$n_k \le i + d_s$).
\begin{lem}
 The pair $(\p D^{\le0}, \p D^{\ge0})$ defines a t-structure on $\Dmixe(X_s)$.
\end{lem}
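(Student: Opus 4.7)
The plan is to work in the explicit model $\Dmixe(X_s) = \Kb\BMPe(X_s)$, where objects are bounded complexes of finite rank graded free $R$-modules. The containments $\p D^{\le 0} \subset \p D^{\le 1}$ and $\p D^{\ge 1} \subset \p D^{\ge 0}$ are then immediate from the definitions: after passing to a representative in the prescribed form, the inequality $n_k \ge i + d_s$ implies the weaker $n_k \ge i + d_s - 1$, and similarly for $\le$.

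\textbf{Hom vanishing $\Hom(\p D^{\le 0}, \p D^{\ge 1}) = 0$.} Given $M, N$ in the respective subcategories, I would choose representatives in the prescribed form. Any chain map $f\colon M \to N$ decomposes into components $R\{n\} \to R\{m\}$ between summands in cohomological degree $i$, each lying in $\Hom_{R\lh\gmod}(R\{n\}, R\{m\}) = R^{m-n}$. The bounds $n \ge i + d_s$ and $m \le i + d_s - 1$ force $m - n \le -1$, and since $R$ lives in non-negative degrees, every such component, hence $f$ itself, must vanish. So no nonzero chain map exists between such representatives, and a fortiori no nonzero morphism in $\Dmixe(X_s)$.

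\textbf{Truncation triangles.} The crux is producing, for each $M \in \Dmixe(X_s)$, a distinguished triangle $A \to M \to B \to A[1]$ with $A \in \p D^{\le 0}$ and $B \in \p D^{\ge 1}$. My plan is first to replace $M$ by a \emph{minimal} representative, one whose differentials have all matrix entries in the maximal ideal $R^{>0}$. Existence follows from the standard cancellation argument: any nonzero scalar entry of a differential must, by the degree constraint $R^0 = \bk$, connect two summands of identical shift, yielding a contractible direct summand $R\{n\} \xrightarrow{\sim} R\{n\}$ that can be split off without changing the homotopy type; iterating terminates since the total rank of the complex strictly decreases. For such a minimal $M$, partition the summands of each $M^i$ into $M^i_+$ (those with $n_k \ge i + d_s$) and $M^i_-$ (those with $n_k \le i + d_s - 1$). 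A potential off-diagonal entry of $d^i$ from $R\{n\} \subset M^i_+$ to $R\{m\} \subset M^{i+1}_-$ would lie in $R^{m-n}$ with $n \ge i + d_s$ and $m \le i + d_s$, forcing $m = n = i + d_s$ and a scalar entry, which minimality forbids. Hence $M_+$ is a subcomplex, the termwise split short exact sequence $M_+ \hookrightarrow M \twoheadrightarrow M_-$ gives the desired distinguished triangle, and $M_+ \in \p D^{\le 0}$, $M_- \in \p D^{\ge 1}$ by construction. The main obstacle is precisely securing the minimal representative; once that is in hand, the weight-based splitting works on the nose.
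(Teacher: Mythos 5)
Your proof is correct, and the first two steps (shift containments and Hom-vanishing) coincide with the paper's. The truncation step, however, takes a genuinely different route. The paper works with an \emph{arbitrary} representative $M$, decomposes each $M^i$ by weight, and builds $L^i$ as $\ker\bigl(d^i_{i+d_s,0}\colon M^i_{i+d_s}\to M^{i+1}_{i+d_s}\bigr)$ plus all higher-weight summands, using that over a field the scalar component $d^i_{i+d_s,0}$ has graded free kernel and cokernel; the quotient $N=M/L$ then still carries contractible pieces in the critical weight that must be recognized as such to see $N\in\p D^{\ge1}$. You instead first pass to a minimal representative (differentials with entries in $R^{>0}$), after which the degree count shows there is \emph{no} differential component from $M^i_+$ to $M^{i+1}_-$ at all, so the truncation is a plain direct-sum splitting of summands and $M_-$ lies in $\p D^{\ge1}$ on the nose. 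The trade-off: your argument outsources the work to the standard Gaussian-elimination/minimality lemma for bounded complexes of graded free modules over a positively graded algebra with $R^0=\bk$ a field (which you correctly sketch), while the paper's is more self-contained but leaves the verification $N\in\p D^{\ge1}$ as "straightforward." Both uses of the field hypothesis are the same in substance — a nonzero degree-zero endomorphism of $R$ is invertible — so neither approach is more general; yours is arguably the cleaner bookkeeping.
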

\begin{proof}
 Clearly $\p D^{\le0}[1] \subset \p D^{\le0}$ and $\p D^{\ge0} \subset \p D^{\ge0}[1]$. If $M \in \p D^{\le0}$ and $N \in \p D^{\ge1}$, then $\Hom_{R\lh\gmod}(M^i, N^i) = 0$, so $\Hom_{\Dmixe(X_s)}(M, N) = 0$. Given $M \in \Dmixe(X_s)$, we must find a distinguished triangle $L \to M \to N \to$ with $L \in \p D^{\le0}$ and $N \in \p D^{\ge1}$. Each $M^i$ has a canonical decomposition $M^i = \bigoplus_j M^i_j$, where $M^i_j$ is isomorphic to a direct sum of copies of $R\{j\}$. Decompose $d^i\colon M^i \to M^{i+1}$ accordingly as $d^i_{jk}\colon M^i_j \to M^{i+1}_{j+k}$ (this is 0 unless $k \ge 0$). Now, define $L$ by
 \[
  L^i = \ker(d^i_{i+d_s, 0}\colon M^i_{i+d_s} \to M^{i+1}_{i+d_s}) \oplus \left( \bigoplus_{j > i+d_s} M^i_j \right)
 \]
 and differentials induced from $M$, and define $N = M/L$. Since $\bk$ is a field, any nonzero map $R \to R$ in $R\lh\gmod$ is an isomorphism. So $d^i_{i+d_s, 0}$ has graded free kernel and cokernel, so that $L$ and $N$ are in $\Dmixe(X_s)$. Clearly $L \in \p D^{\le0}$, and it is also straightforward to see that $N \in \p D^{\ge1}$.
\end{proof}

The following result is the equivariant analogue of~\cite[Lemma~3.2]{AR-II} and is proved in the same way.
\begin{lem}\label{lem:vanishing-delta-nabla}
 Let $s, t \in \cV(X)$. Then we have
 \[
  \Hom_{\Dmixe(X)}^\bullet (\Delta_s, \nabla_t [i]) \cong
   \begin{cases}
    R & \text{if $s = t$ and $i = 0$;} \\
    0 & \text{otherwise,}
   \end{cases}
 \]
 as graded $R$-modules.
\end{lem}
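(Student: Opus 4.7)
The plan is to use adjunctions from the recollement structure of Proposition~\ref{prop:recollement} (extended to locally closed inclusions as in the end of~\S\ref{ss:recollement}) to shift one of the two extremal functors and reduce to a Hom computation on a single stratum. Specifically, the adjunctions $i_{s!} \dashv i_s^!$ and $i_t^* \dashv i_{t*}$ yield the two identifications
\[
 \Hom^\bullet_{\Dmixe(X)}(\Delta_s, \nabla_t[i]) \cong \Hom^\bullet_{\Dmixe(X_t)}(i_t^*\Delta_s,\, \ubk_{X_t}\{d_t\}[i]) \cong \Hom^\bullet_{\Dmixe(X_s)}(\ubk_{X_s}\{d_s\},\, i_s^!\nabla_t[i]),
\]
and I would use whichever is more convenient in each case.

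For the diagonal case $s = t$, I would use the first identification together with the standard recollement compatibility $i_s^* i_{s!} \cong \id$, which holds because $i_s$ factors as $X_s \hookrightarrow \{\le s\} \hookrightarrow X$ (open followed by closed) with each piece satisfying $j^*j_! \cong \id$ and $i^*i_* \cong \id$. This reduces the problem to $\Hom^\bullet_{\Dmixe(X_s)}(\ubk_{X_s}, \ubk_{X_s}[i])$. Since $\BMPe(X_s)$ is just the category of finite-rank graded free $R$-modules and the two complexes are concentrated in cohomological degree~$0$, this Hom in $\Kb \BMPe(X_s)$ vanishes for $i \neq 0$ and for $i = 0$ equals $\bigoplus_n \Hom_{R\lh\gmod}(R, R\{n\}) = \bigoplus_n R^n = R$ as a graded $R$-module.

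For the off-diagonal case $s \neq t$, the key observation is that at least one of $t \not\le s$ or $s \not\le t$ holds. If $t \not\le s$, then $\Delta_s$ factors through $\iota_*$ for the closed inclusion $\iota\colon \{\le s\} \hookrightarrow X$ (via $i_{s!} = \iota_* j_{s!}$), while $X_t$ sits inside the complementary open subset $u\colon X \setminus \{\le s\} \hookrightarrow X$. The recollement vanishing $u^* \iota_* = 0$, together with the factorization $i_t^* = (i_{t,u})^* \circ u^*$, gives $i_t^*\Delta_s = 0$, so the first Hom vanishes. The case $s \not\le t$ is entirely dual: $\nabla_t$ factors through closed pushforward from $\{\le t\}$, and the analogous $!$-vanishing yields $i_s^!\nabla_t = 0$, so the second Hom vanishes.

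The main obstacle, such as it is, lies in verifying that the recollement identities $i_s^* i_{s!} \cong \id$, $u^*\iota_* = 0$, and their $!$-duals, extend from the open/closed recollement of Proposition~\ref{prop:recollement} to the locally closed stratum inclusions $i_s$, $i_t$. This is built into the composition-of-functors extension described at the end of~\S\ref{ss:recollement}; once in hand, the proof is formal and mirrors~\cite[Lemma~3.2]{AR-II}.
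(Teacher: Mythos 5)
Your proof is correct and follows essentially the same route as the paper, which simply invokes the argument of \cite[Lemma~3.2]{AR-II}: adjunction for $i_{s!}$ (resp.\ $i_{t*}$) reduces everything to a single stratum, the off-diagonal vanishing comes from $u^*\iota_*=0$ applied to whichever of $\{\le s\}$, $\{\le t\}$ fails to contain the other stratum, and the diagonal case is the computation $\Hom^\bullet_{R\lh\gmod}(R,R)\cong R$ in $\Kb$ of graded free $R$-modules. No issues.
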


\begin{defn}
 For a finite moment graph $X$, the \emph{perverse t-structure} on $\Dmixe(X)$ is defined by recollement, as in~\cite{BBD}:
 \begin{align*}
  \p\Dmixe(X)^{\le0} &= \{ \cF \in \Dmixe(X) \mid i_s^*\cF \in \p\Dmixe(X_s)^{\le 0} \text{ for all } s \in \cV(X) \}, \\
  \p\Dmixe(X)^{\ge0} &= \{ \cF \in \Dmixe(X) \mid i_s^!\cF \in \p\Dmixe(X_s)^{\ge 0} \text{ for all } s \in \cV(X) \}.
 \end{align*}
\end{defn}

\begin{prop}[cf.~\cite{AR-II}, Proposition~3.4]
 The perverse t-structure on\\$\Dmixe(X)$ is uniquely characterized by each of the following statements:
 \begin{enumerate}
  \item $\p\Dmixe(X)^{\le0}$ is generated under extensions by the $\Delta_s\la n \ra[m]$ with $s \in \cV(X)$, $n \in \Z$, and $m \ge 0$.
  \item $\p\Dmixe(X)^{\ge0}$ is generated under extensions by the $\nabla_s\la n \ra[m]$ with $s \in \cV(X)$, $n \in \Z$, and $m \le 0$.
 \end{enumerate}
\end{prop}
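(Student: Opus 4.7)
The plan is to prove statement (1); statement (2) follows by the dual argument using the $j_*/i^!$ half of recollement. Uniqueness of the t-structure is automatic, since a t-structure is determined by its aisle. I will prove (a) that each $\Delta_s\la n\ra[m]$ with $m \ge 0$ lies in $\p\Dmixe(X)^{\le 0}$, and (b) that every object of $\p\Dmixe(X)^{\le 0}$ is generated under extensions by such shifts.

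For (a), recall from the discussion following Proposition~\ref{prop:recollement} that $i_t^*\Delta_s$ vanishes unless $t = s$, in which case it equals $\ubk_{X_s}\{d_s\}$. The latter is the single summand $R\{d_s\}$ sitting in cohomological degree $0$, so the defining inequality $n_k \ge i + d_s$ of $\p D^{\le 0}$ on the stratum is satisfied with equality. The Tate twist $\la n\ra = \{-n\}[n]$ shifts internal grading and cohomological degree by opposite amounts, preserving the inequality, while $[m]$ with $m \ge 0$ pushes further into $\p D^{\le 0}$. Hence $\Delta_s\la n\ra[m] \in \p\Dmixe(X)^{\le 0}$.

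For (b), I would induct on the number of strata of $X$. In the base case $X = X_s$, an object of $\p\Dmixe(X_s)^{\le 0}$ is by definition homotopy equivalent to a complex $M$ whose degree-$i$ term is a direct sum of $R\{n_k\}$ with $n_k \ge i + d_s$. Such a complex is built by iterated mapping cones from its individual summands, each of the form $\ubk_{X_s}\{n_k\}[-i] \cong \Delta_s\la d_s - n_k\ra[n_k - d_s - i]$, and the inequality forces the cohomological shift $n_k - d_s - i$ to be nonnegative. For the inductive step, choose a closed stratum $\iota\colon Z \hookrightarrow X$ with open complement $j\colon U \hookrightarrow X$, where $Z$ has strictly fewer strata than $X$. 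Given $\cF \in \p\Dmixe(X)^{\le 0}$, the definition of the perverse t-structure gives $j^*\cF \in \p\Dmixe(U)^{\le 0}$ and $\iota^*\cF \in \p\Dmixe(Z)^{\le 0}$. By the inductive hypothesis, each of these is generated under extensions by the appropriate standard sheaves, so applying the triangulated functors $j_!$ and $\iota_*$---together with the composition identities $j_!\Delta^U_t \cong \Delta^X_t$ and $\iota_*\Delta^Z_t \cong \Delta^X_t$ inherited from the functoriality of $(-)_!$ for locally closed inclusions---yields generation of $j_!j^*\cF$ and $\iota_*\iota^*\cF$ in $\Dmixe(X)$. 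The recollement triangle $j_!j^*\cF \to \cF \to \iota_*\iota^*\cF \to$ then exhibits $\cF$ itself as an extension of the required form.

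The main obstacle is the base case: one must carefully match the explicit combinatorial definition of $\p D^{\le 0}$ on a single stratum---a graded-free condition relating internal grading and cohomological degree---with the reparametrization of $\ubk_{X_s}\{n_k\}[-i]$ in terms of Tate twists and cohomological shifts of $\Delta_s$. Once this bookkeeping is in place, the inductive step is a formal consequence of the recollement formalism developed in \S\ref{ss:recollement}, and statement (2) comes for free by the dual argument.
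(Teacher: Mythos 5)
Your proof is correct and follows the same route as the paper, which simply defers to \cite[Proposition~3.4]{AR-II}: one checks the generators lie in the aisle by computing $i_t^*\Delta_s$, handles a single stratum via the stupid filtration and the reindexing $\ubk_{X_s}\{n_k\}[-i]\cong\Delta_s\la d_s-n_k\ra[n_k-d_s-i]$, and then inducts on strata using the triangle $j_!j^*\cF\to\cF\to i_*i^*\cF\to$ together with the composition identities for $!$-pushforwards. Nothing further is needed.
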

Let $\Pmixe(X)$ denote the heart. Its objects are called \emph{mixed perverse sheaves}. Tate twist is t-exact for the perverse t-structure, hence restricts to an exact endofunctor of $\Pmixe(X)$. For each $s \in \cV(X)$, define
\[
 \IC_s := \im(\p\tau_{\ge0}\Delta_s \to \p\tau_{\le0}\nabla_s) \in \Pmixe(X),
\]
where $\p\tau_{\le i}$, $\p\tau_{\ge i}$ are the truncation functors. It follows from the general recollement theory of~\cite{BBD} that $\Pmixe(X)$ is a finite length abelian category with simples $\{ IC_s \mid s \in \cV(X) \}$ up to isomorphism and Tate twist.

The next lemma addresses the behavior of (co)standard sheaves under PSS morphisms. For us, the following partial analogue of~\cite[Lemma~3.8]{AR-II} will suffice.
\begin{lem}\label{lem:pss-delta-nabla}
 Let $f\colon X \to Y$ be PSS-$d$. Suppose that $X_s \subset f^{-1}(Y_t)$, i.e.~$f(s) = t$. Then
 \begin{align*}
  f_*\nabla_s &\cong \nabla_t \{ d_s - d_t \}, \\
  f_*\Delta_s &\cong
  \begin{cases}
   \Delta_t \{d_s - d_t\} &\mbox{ if } X_s \mbox{ is a closed stratum in } f^{-1}(Y_t); \\
   \Delta_t \{ d_s - d_t - 2d \} &\mbox{ if } X_s \mbox{ is an open stratum in } f^{-1}(Y_t).
  \end{cases}
 \end{align*}
\end{lem}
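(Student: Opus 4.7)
The plan is to factor $i_s$ through the preimage $f^{-1}(Y_t)$ in order to reduce to the case where $Y=Y_t$ is a single stratum. Concretely, let $h\colon f^{-1}(Y_t)\hookrightarrow X$ and $h'\colon Y_t\hookrightarrow Y$ be the locally closed stratum inclusions, and write $i_s=h\circ j_s$, where $j_s\colon X_s\hookrightarrow f^{-1}(Y_t)$ is a closed or open inclusion (according to the two cases). By Proposition~\ref{prop:pss}(1), the restriction $f'\colon f^{-1}(Y_t)\to Y_t$ is again PSS-$d$, and by Proposition~\ref{prop:pss}(2) one has $f_*h_*\cong h'_*f'_*$ and $f_*h_!\cong h'_!f'_*$. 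Since $h'_*\ubk_{Y_t}\{d_s\}\cong\nabla_t\{d_s-d_t\}$ and $h'_!\ubk_{Y_t}\{d_s\}\cong\Delta_t\{d_s-d_t\}$, it suffices to establish, inside $\Dmixe(Y_t)$, that $f'_*j_{s*}\ubk_{X_s}\{d_s\}\cong\ubk_{Y_t}\{d_s\}$ in both cases (which also handles the closed case of $\Delta$, since there $j_{s!}=j_{s*}$) and that $f'_*j_{s!}\ubk_{X_s}\{d_s\}\cong\ubk_{Y_t}\{d_s-2d\}$ in the open case.

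The first claim I would prove by Yoneda, using the left adjunctions $f'^*\dashv f'_*$ and $j_s^*\dashv j_{s*}$: for any $\cG\in\Dmixe(Y_t)$,
\[
 \Hom(\cG, f'_* j_{s*} \ubk_{X_s}\{d_s\})\cong\Hom(j_s^*f'^*\cG,\ubk_{X_s}\{d_s\}).
\]
The key observation is that $j_s^*f'^*\cG\cong\cG$ under the canonical identification of $\Dmixe(X_s)$ and $\Dmixe(Y_t)$ with $\Kb$ of finite-rank graded free $R$-modules. Indeed, since $X_s$ is a single stratum, Remark~\ref{rmk:restriction-two-strata} yields $j_s^*\cong j_s^{[*]}$ in both the closed and open cases; and by Definition~\ref{def:pullback}, the naive restriction of $f'^*\cG$ to $X_s$ has stalk $\cG^t=\cG$. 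Yoneda then delivers the first claim, and combined with the reduction above this proves the formula for $f_*\nabla_s$ (in both cases) and for $f_*\Delta_s$ in the closed case.

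For the open case of $\Delta_s$ one has $j_{s!}\neq j_{s*}$, so I would instead invoke the right-hand adjunctions: $f'_*=f'_!\dashv f'^!=f'^*\{2d\}$ (from the PSS-$d$ hypothesis) and $j_{s!}\dashv j_s^!=j_s^*$ (valid since $j_s$ is open). These give
\[
 \Hom(f'_*j_{s!}\ubk_{X_s}\{d_s\},\cG)\cong\Hom(\ubk_{X_s}\{d_s\},j_s^*f'^*\cG\{2d\})\cong\Hom(\ubk_{X_s}\{d_s\},\cG\{2d\}),
\]
using the same identity $j_s^*f'^*\cG\cong\cG$, and Yoneda produces the predicted extra shift by $\{-2d\}$. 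The only nonformal input is the identification $j_s^*f'^*\cG\cong\cG$, which is handled uniformly by Remark~\ref{rmk:restriction-two-strata}; everything else is pure manipulation of the adjunctions packaged in the PSS-$d$ hypothesis and the recollement formalism. The shift by $\{2d\}$ in the open $\Delta$ case arises precisely because one is forced to use the right adjoint $f'^!$ rather than the left adjoint $f'^*$.
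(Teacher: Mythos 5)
Your proof is correct and follows essentially the same route as the paper's: factor $i_s$ through $f^{-1}(Y_t)$, use Proposition~\ref{prop:pss} to commute $f_*$ past $h_*$ and $h_!$, observe that $i^{\prime *}_s f^* \cong \id$ because pullback followed by naive restriction to a single stratum returns the stalk at $t$, and then pass to adjoints (with the extra $\{2d\}$ coming from $f^! = f^*\{2d\}$ in the open case). The only cosmetic difference is that you spell out the adjoint transpositions via Yoneda where the paper simply says ``by adjunction.''
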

\begin{proof}
 Consider the following diagram:
 \[
  \xymatrix{
   X_s \ar@/^1pc/@{^(->}[rr]^-{i_s} \ar@{^(->}[r]_-{i'_s} & f^{-1}(Y_t) \ar[d]_f \ar[r]_-h & X \ar[d]^f \\
    & Y_t \ar@{^(->}[r]_-{i_t}  & Y
  }
 \]
 In what follows, use the identification $\Dmixe(X_s) \cong \Dmixe(Y_t)$ induced from the obvious identification $\BMPe(X_s) \cong \BMPe(Y_t)$ sending the constant sheaf to the constant sheaf.
 
 By definition, $f^*$ is the pullback functor, and by Remark~\ref{rmk:restriction-two-strata}, $i^{\prime *}_s$ is the naive restriction, so $i^{\prime *}_sf^* \cong \id$. By adjunction, $f_*i'_{s*} \cong \id$. Hence
 \begin{multline*}
  f_*\nabla_s = f_*i_{s*} \ubk_{X_s}\{ d_s \} \cong f_*h_*i'_{s*} \ubk_{X_s}\{ d_s \} \overset{\text{Prop.~\ref{prop:pss}}}{\cong} i_{t*}f_*i'_{s*} \ubk_{X_s}\{ d_s \} \\
  \cong i_{t*} \ubk_{Y_t}\{ d_s \} \cong \nabla_t\{d_s - d_t\}.
 \end{multline*}

 If $X_s$ is a closed stratum in $f^{-1}(Y_t)$, then $i^{\prime !}_s = i^{\prime *}_s$ is again the naive restriction, so $f_*i'_{s*} \cong \id$ and $f_*\Delta_s \cong \Delta_t\{d_s - d_t\}$ by dualizing the argument above.
 
 Suppose $X_s$ is an open stratum in $f^{-1}(Y_t)$. Then $f_!i'_{s!} \dashv i^{\prime !}_sf^! \cong i^{\prime *}_sf^*\{2d\} \cong \id\{2d\}$, so $f_!i'_{s!} \cong \id\{-2d\}$, and we again use Proposition~\ref{prop:pss} to show that $f_*\Delta_s \cong \Delta_t \{ d_s - d_t - 2d \}$.
\end{proof}

\subsection{Constructible mixed derived category and extension to infinite moment graphs}
\label{ss:constructible}
From now on, we assume that $X$ satisfies condition {\bf (V)} from the end of~\S\ref{ss:naive-functors}.

Denote by $\BMPc(X)$ the category obtained from $\BMPe(X)$ by applying $\bk \otimes_R (-)$ to graded Homs. That is, the objects of $\BMPc(X)$ are again BMP sheaves, and for two BMP sheaves $\cE, \cF$, we have
 \[
  \Hom_{\BMPc(X)}(\cE, \cF) = (\bk \otimes_R \Hom^\bullet_{\BMPe(X)}(\cE, \cF))_0,
 \]
i.e.~the degree 0 part of $\bk \otimes_R \Hom^\bullet_{\BMPe(X)}(\cE, \cF)$. This category has an induced grading shift $\{1\}$. The natural quotient functor
\[
 \For\colon \BMPe(X) \to \BMPc(X)
\] 
is called the \emph{forgetful functor}. Defining the graded Hom as before (now only a graded $\bk$-module), it follows from the definitions that the natural map
\begin{equation} \label{eq:equivariant-formality}
 \bk \otimes_R \Hom_{\BMPe(X)}^\bullet(\cE, \cF) \simto \Hom_{\BMPc(X)}^\bullet(\For\cE, \For\cF)
\end{equation}
is an isomorphism in $\bk\lh\gmod$.

\begin{rmk}
 The forgetful functor $\For\colon \Parity_T(X, \bk) \to \Parity_{(T)}(X, \bk)$ from $T$-equivariant to $T$-constructible parity sheaves satisfies the analogue of \eqref{eq:equivariant-formality}~\cite[Lemma~2.2(2)]{MR}. The equivalence $\Parity_T(X, \bk) \simto \BMPe(\bk \otimes_\Z \cG(X, T))$ therefore induces an equivalence $\Parity_{(T)}(X, \bk) \simto \BMPc(\bk \otimes_\Z \cG(X, T))$.
\end{rmk}

\begin{defn}
 Define the \emph{constructible mixed derived category} of $X$ by
 \[
  \Dmixc(X) := \Kb\BMPc(X).
 \]
\end{defn}
Let $\For\colon \Dmixe(X) \to \Dmixc(X)$ be the induced exact functor.

\begin{prop} \label{prop:BMP-graded-free-HOM}
 Let $X$ be a finite moment graph satisfying {\bf (V)}. For any $\cE, \cF \in \BMPe(X)$, $\Hom_{\BMPe(X)}^\bullet(\cE, \cF)$ is graded free over $R$.
\end{prop}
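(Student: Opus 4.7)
The plan is to induct on the number of strata of $X$, using the short exact sequences of Proposition~\ref{prop:BMP-pseudo-dt}. For the base case $X = X_s$ of a single stratum, BMP sheaves on $X_s$ are finite-rank graded free $R$-modules and their morphisms are simply $R$-module homomorphisms, so $\Hom^\bullet_{\BMPe(X_s)}(\cE,\cF)$ is graded free over $R$.

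For the inductive step, I would pick a minimal element $s \in \cV(X)$ (using finiteness of $X$) so that $X_s$ is closed, and let $i\colon X_s \hookrightarrow X$ and $j\colon U \hookrightarrow X$ denote the resulting closed inclusion and its open complement. Applying the second short exact sequence of Proposition~\ref{prop:BMP-pseudo-dt} with $\cF$ replaced by $\cF\{n\}$ for each $n \in \Z$ and summing over $n$ (using that $i^{[!]}$ and $j^*$ commute with grading shift) yields a short exact sequence of graded $R$-modules
\[
 0 \to \Hom^\bullet_{\She(X)}(\cE, i_*i^{[!]}\cF) \to \Hom^\bullet_{\She(X)}(\cE, \cF) \to \Hom^\bullet_{\She(U)}(j^*\cE, j^*\cF) \to 0.
\]

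By the $(i^{[*]}, i_*)$ adjunction of Proposition~\ref{prop:naive-partial-recollement}, the leftmost term is identified with $\Hom^\bullet_{\She(X_s)}(i^{[*]}\cE, i^{[!]}\cF)$. Now $i^{[*]}\cE$ is BMP on $X_s$ since restriction to a single stratum always preserves BMP (see Remark~\ref{rmk:restriction-two-strata}), and $i^{[!]}\cF$ is BMP on $X_s$ precisely by condition \textbf{(V)}. Both sides are then finite-rank graded free $R$-modules, so their graded Hom is graded free. For the rightmost term, $U$ has strictly fewer strata than $X$ and inherits \textbf{(V)} from $X$ by Lemma~\ref{lem:V-open}, so the inductive hypothesis applies. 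Since the quotient of the short exact sequence is graded free and hence graded projective, the sequence splits, and the middle term is a direct sum of two graded free $R$-modules, hence graded free.

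The essential ingredient making the argument go through is condition \textbf{(V)}: without it, $i^{[!]}\cF$ could fail to be graded free on $X_s$ and the identification of the leftmost term would break down. There is no serious obstacle here beyond choosing the correct one of the two dual short exact sequences in Proposition~\ref{prop:BMP-pseudo-dt}—namely the one involving $i^{[!]}$, so that condition \textbf{(V)} can be invoked directly.
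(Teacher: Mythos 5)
Your proof is correct and follows essentially the same route as the paper: the paper inducts by adjoining one stratum at a time to an open subgraph (along a total order refining the partial order) and applies the second short exact sequence of Proposition~\ref{prop:BMP-pseudo-dt}, using condition \textbf{(V)} for the costalk term and induction for the restriction term, exactly as you do by peeling off a minimal closed stratum. The only cosmetic difference is that the paper keeps the costalk $i_k^{[!]}\cF$ computed on all of $X$, so it invokes \textbf{(V)} for $X$ directly rather than transferring it to $U$ via (the argument of) Lemma~\ref{lem:V-open}.
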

\begin{proof}
 Choose a total order $s_1 > \cdots > s_r$ on $\cV(X)$ refining the given partial order. For $k \ge 1$, consider the open inclusion $j_k\colon U_k = \{s_1, \ldots, s_k\} \hookrightarrow X$. We will induct on $k$ to show that $\Hom^\bullet_{\She(U_k)}(j_k^*\cE, j_k^*\cF)$ is graded free.
 
 This is clear for $k = 1$. For $k > 1$, let $j'_k\colon U_k \hookrightarrow U_{k+1}$ be the open inclusion with closed complement $i'_k\colon X_{s_k} \hookrightarrow U_{k+1}$, and let $i_k = j_{k+1} \circ i'_k\colon X_{s_k} \hookrightarrow X$. Applying Proposition~\ref{prop:BMP-pseudo-dt} to $j_{k+1}^*\cE$ and $j_{k+1}^*\cF\{n\}$ and summing over $n \in \Z$, we get a short exact sequence of graded $R$-modules
 \begin{multline*}
  0 \to \Hom_{\She(U_{k+1})}^\bullet(j_{k+1}^*\cE, i'_{k*}i_k^{[!]}\cF) \\
  \to \Hom_{\She(U_{k+1})}^\bullet(j_{k+1}^*\cE, j_{k+1}^*\cF) \to \Hom_{\She(U_k)}^\bullet(j_k^*\cE, j_k^*\cF) \to 0.
 \end{multline*}
 The first term is graded free by the condition {\bf (V)}, and the last term is graded free by induction. Hence so is the middle term.
\end{proof}

Proposition~\ref{prop:BMP-graded-free-HOM} implies that the constructible analogue of Proposition~\ref{prop:BMP-pseudo-dt} holds, and then that the whole theory of~\S\ref{s:recollement} generalizes to the constructible mixed derived category with the same proofs.

For a single stratum $X_s$, $\BMPc(X_s)$ naturally identifies with the category of finite dimensional $\bk$-vector spaces. Thus $\Kb\BMPc(X_s)$ has a natural t-structure, and shifting by the dimension function, we obtain as before the perverse t-structure on $\Dmixc(X)$. It is easy to check that the forgetful functor is perverse t-exact.

We have the following constructible analogue of Lemma~\ref{lem:vanishing-delta-nabla}:
\begin{lem}[cf.~\cite{AR-II}, Lemma~3.2] \label{lem:vanishing-delta-nabla-constructible}
 \[
  \Hom_{\Dmixc(X)}^\bullet (\Delta_s, \nabla_t [i]) \cong
   \begin{cases}
    \bk & \text{if $s = t$ and $i = 0$;} \\
    0   & \text{otherwise,}
   \end{cases}
 \]
 as graded $\bk$-modules.
\end{lem}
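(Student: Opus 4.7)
The plan is to deduce this constructible statement from its equivariant counterpart, Lemma~\ref{lem:vanishing-delta-nabla}, via the base-change isomorphism~\eqref{eq:equivariant-formality}. First, since condition~\textbf{(V)} is in force, I note that the proof of Proposition~\ref{prop:recollement} constructs the recollement functors in both $\Dmixe(X)$ and $\Dmixc(X)$ using the same two-term complexes $\cE^\pm$ built from the naive functors $i_*, j^*, i^{[*]}, i^{[!]}$, and that the latter visibly commute with $\For$ at the level of BMP categories. I would therefore choose bounded complexes $\Delta_s^\bullet, \nabla_t^\bullet$ of BMP sheaves representing $\Delta_s, \nabla_t \in \Dmixe(X)$ whose termwise images under $\For$ represent the analogous objects in $\Dmixc(X)$.

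The main step is then to analyze the bounded complex of graded $R$-modules
\[
 K^\bullet := \Hom_{\BMPe(X)}^\bullet(\Delta_s^\bullet, \nabla_t^\bullet),
\]
which by construction satisfies $H^i(K^\bullet) = \Hom_{\Dmixe(X)}^\bullet(\Delta_s, \nabla_t[i])$. Proposition~\ref{prop:BMP-graded-free-HOM} (which applies under~\textbf{(V)}) tells us that each $K^n$ is graded free over $R$, and Lemma~\ref{lem:vanishing-delta-nabla} identifies $H^*(K^\bullet)$ with $R$ concentrated in degree $0$ (when $s = t$) or $0$ otherwise. In particular the cohomology is $R$-flat, so the universal coefficient spectral sequence $E_2^{p,q} = \mathrm{Tor}^R_{-p}(\bk, H^q(K^\bullet)) \Rightarrow H^{p+q}(\bk \otimes_R K^\bullet)$ collapses at $E_2$ and yields $H^i(\bk \otimes_R K^\bullet) \cong \bk \otimes_R H^i(K^\bullet)$. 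Finally, applying~\eqref{eq:equivariant-formality} termwise identifies $\bk \otimes_R K^\bullet$ with $\Hom_{\BMPc(X)}^\bullet(\For\Delta_s^\bullet, \For\nabla_t^\bullet)$, whose $i$-th cohomology is the desired constructible Hom; combining these gives the result.

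The main obstacle I anticipate is verifying rigorously that $\Delta_s, \nabla_t \in \Dmixc(X)$ are represented by $\For\Delta_s^\bullet, \For\nabla_t^\bullet$, i.e.~that the full recollement package of~\S\ref{ss:recollement} is compatible with $\For$. This is essentially bookkeeping once one unwinds the explicit construction of $j_!$ and $j_*$ under~\textbf{(V)}, but it is the only subtle point: all the rest is formal manipulation with the Hom complex.
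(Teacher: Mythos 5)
Your argument is correct, but it takes a genuinely different route from the one the paper intends. The paper's (implicit) proof is to re-run the equivariant argument verbatim inside the constructible category: Proposition~\ref{prop:BMP-graded-free-HOM} gives the constructible analogue of Proposition~\ref{prop:BMP-pseudo-dt}, hence the full recollement package on $\Dmixc(X)$, and then the adjunction computation of Lemma~\ref{lem:vanishing-delta-nabla} (reduce via $i_{s!}\dashv i_s^!$ and $i_t^!\nabla_t$, $i_s^*\Delta_s$ to a single stratum, where the Hom is now $\bk$ rather than $R$) goes through with no change. You instead \emph{deduce} the constructible statement from the equivariant one by base change along $R\to\bk$: representing $\Delta_s,\nabla_t$ by complexes of BMP sheaves compatible with $\For$, using Proposition~\ref{prop:BMP-graded-free-HOM} to see that the total Hom complex has graded free terms, and using the $R$-flatness of the answer $R$ (or $0$) from Lemma~\ref{lem:vanishing-delta-nabla} to collapse the universal coefficient spectral sequence, then invoking~\eqref{eq:equivariant-formality} termwise. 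Both routes ultimately rest on Proposition~\ref{prop:BMP-graded-free-HOM}, and both need the compatibility $\For\Delta_s\cong\Delta_s$, $\For\nabla_t\cong\nabla_t$ (the paper uses this silently when asserting that $\For$ is perverse t-exact; your unwinding of $\cE^\pm$ through the inductive construction of $j_!$ and $j_*$ under {\bf (V)} is the right way to check it). What your approach buys is a transparent illustration of the ``equivariant formality'' principle --- that freeness of equivariant Homs forces constructible Homs to be $\bk\otimes_R$ of them at the derived level --- at the cost of the representative-choosing bookkeeping; the paper's approach is less illuminating for this particular lemma but transfers all of \S\ref{ss:perverse-t-structure} uniformly by one blanket remark.
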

All other results of~\S\ref{ss:perverse-t-structure} continue to hold, with the same proof.

Finally, the entire theory readily extends to infinite moment graphs $X$ such that $\{ \le s\}$ is finite for all $s \in \cV(X)$, analogous to the extension to ind-varieties in~\cite[\S3.6]{AR-II}.

\subsection{Graded highest weight structure}
\label{ss:ghw-structure}

Let $X$ be a moment graph satisfying the condition {\bf (V)}, so that $\Pmixc(X)$ is defined. We now impose the following additional condition from~\cite[\S3.2]{AR-II}:
\begin{enumerate}
 \item[\bf(A2)] For each $s \in \cV(X)$, the objects $\Delta_s$ and $\nabla_s$ are perverse.
\end{enumerate}
\begin{prop}[cf.~\cite{AR-II}, Proposition~3.11] \label{prop:ghw-structure}
 If $X$ satisfies the condition~{\bf (A2)}, then $\Pmixc(X)$ is a graded highest weight category in the sense of~\cite[Definition~A.1]{AR-II}, and $\Delta_s\la n \ra$ (resp.~$\nabla_s\la n \ra$) are precisely the standard (resp.~costandard) objects.
\end{prop}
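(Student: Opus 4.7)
The plan is to verify the axioms of a graded highest weight category as formulated in~\cite[Definition~A.1]{AR-II}, following the same strategy as~\cite[Proposition~3.11]{AR-II}. Taking $\cV(X)$ as the weight poset with $\Z$-action by Tate twist, the candidates for the standard, costandard, and simple objects indexed by $(s,n) \in \cV(X) \times \Z$ are $\Delta_s \la n \ra$, $\nabla_s \la n \ra$, and $\IC_s \la n \ra$, respectively. Condition \textbf{(A2)} ensures the former two are honest objects of $\Pmixc(X)$, and the general recollement theory of~\cite{BBD} has already been invoked to produce $\Pmixc(X)$ as a finite length abelian category with simples $\IC_s \la n \ra$.

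The Ext-orthogonality axiom is the first and most substantial input. Lemma~\ref{lem:vanishing-delta-nabla-constructible} asserts
\[
 \Hom^\bullet_{\Dmixc(X)}(\Delta_s, \nabla_t[i]) \cong
 \begin{cases} \bk & s = t,\ i = 0, \\ 0 & \text{otherwise.} \end{cases}
\]
Since by \textbf{(A2)} both $\Delta_s$ and $\nabla_t$ lie in the heart $\Pmixc(X)$, this identifies $\Ext^i_{\Pmixc(X)}(\Delta_s\la n\ra, \nabla_t\la m\ra)$ with the corresponding graded piece of $\Hom^\bullet_{\Dmixc(X)}(\Delta_s, \nabla_t[i])$, giving exactly the required biorthogonality: it is $\bk$ if $(s,n,i) = (t,m,0)$ and zero otherwise.

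Next I would verify the structure of $\Delta_s$ and $\nabla_s$ as (co)standards. On a single stratum $X_s$, the naive restriction $i_s^*\Delta_s \cong i_s^*\nabla_s$ is the constant sheaf shifted by $\{d_s\}$, hence simple perverse on $X_s$, which shows that $\IC_s$ (as defined in the paper via the canonical image $\p\tau_{\ge 0}\Delta_s \to \p\tau_{\le 0}\nabla_s$) is the unique simple quotient of $\Delta_s$ and the unique simple subobject of $\nabla_s$. The recollement gives distinguished triangles
\[
 K \to \Delta_s \to \IC_s \to, \qquad \IC_s \to \nabla_s \to C \to
\]
in $\Pmixc(X)$, where $K = \ker(\Delta_s \to \IC_s)$ and $C = \mathrm{coker}(\IC_s \to \nabla_s)$. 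Applying $i_s^*$ to $K$ and $i_s^!$ to $C$ gives zero (by the characterization of $\IC_s$), so both lie in the Serre subcategory of $\Pmixc(X)$ supported on $\{<s\}$; since $\Pmixc(X)$ has finite length, every composition factor of $K$ and $C$ is therefore of the form $\IC_t\la m \ra$ with $t < s$.

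The main potential obstacle is the verification of the remaining axiom of~\cite[Definition~A.1]{AR-II} concerning the existence of a well-behaved ``filtration'' setup beyond just $\Delta$'s and $\nabla$'s---in the finite case this amounts to enough projectives/injectives with standard/costandard filtrations, which follows by the standard BBD argument via induction on the number of strata (opening up one stratum at a time using the recollement of Proposition~\ref{prop:recollement}) and the Ext-orthogonality above, exactly as in~\cite[Proposition~3.11]{AR-II}. For infinite $X$ with finite $\{\le s\}$, one appeals to the $\Pmixc(X) = \varinjlim \Pmixc(X_{\le s})$ construction parallel to~\cite[\S3.6]{AR-II}. Once the finite case is done, the axioms of~\cite[Definition~A.1]{AR-II} have been checked term by term, completing the proof.
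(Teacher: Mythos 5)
Your proposal is correct and follows essentially the approach the paper intends: the paper omits the proof entirely, deferring to~\cite[Proposition~3.11]{AR-II}, and your verification of the axioms via condition \textbf{(A2)}, Lemma~\ref{lem:vanishing-delta-nabla-constructible}, the BBD description of $\ker(\Delta_s \to \IC_s)$ and $\mathrm{coker}(\IC_s \to \nabla_s)$, and induction on strata is precisely that argument. One small imprecision: the comparison map $\Ext^i_{\Pmixc(X)}(\Delta_s\la n\ra, \nabla_t\la m\ra) \to \Hom_{\Dmixc(X)}(\Delta_s\la n\ra, \nabla_t\la m\ra[i])$ is an isomorphism only for $i \le 1$ and injective for $i = 2$ (with no general statement for higher $i$), which is all the highest weight axioms require but falls short of the full ``identification'' you assert for every $i$.
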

In this situation, we may speak of the full additive subcategory
\[
 \Tmixc(X) \subset \Pmixc(X)
\]
of tilting objects, whose objects are called \emph{mixed tilting sheaves}. For each $s \in \cV(X)$, there is a unique indecomposable tilting object $\cT_s$ that is supported on $\{ \le s\}$ and whose restriction to $X_s$ is $\ubk_{X_s}\{d_s\}$.
\begin{lem}[cf.~\cite{AR-II}, Lemma~3.15] \label{lem:tilt-perv-der-eq}
 The natural functors
 \[
  \Kb\Tmixc(X) \to \Db\Pmixc(X) \to \Dmixc(X)
 \]
 are equivalences of categories.
\end{lem}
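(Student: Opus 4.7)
The plan is to prove the two equivalences separately using standard machinery for graded highest weight categories, leveraging Proposition~\ref{prop:ghw-structure} and Lemma~\ref{lem:vanishing-delta-nabla-constructible}. Both arguments should transfer essentially verbatim from~\cite{AR-II}; the moment-graph setting introduces no new obstacles once the key vanishing results have been established.

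For the realization functor $\Db\Pmixc(X) \to \Dmixc(X)$, I would apply Beilinson's criterion: it suffices to check that for all $\cF, \cG \in \Pmixc(X)$ and all $i \ge 0$, the natural map
\[
 \Ext^i_{\Pmixc(X)}(\cF, \cG) \to \Hom_{\Dmixc(X)}(\cF, \cG[i])
\]
is an isomorphism, and that $\Pmixc(X)$ generates $\Dmixc(X)$ as a triangulated category. Generation follows from recollement combined with {\bf (A2)}: the triangulated category $\Dmixc(X)$ is built from the $\Delta_s\la n \ra[m]$, and by {\bf (A2)} these live (up to the cohomological shift $[m]$) in the heart. The Ext-matching is verified first on $\cF = \Delta_s$, $\cG = \nabla_t$ using Lemma~\ref{lem:vanishing-delta-nabla-constructible} and the vanishing $\Ext^{>0}_{\Pmixc(X)}(\Delta_s, \nabla_t) = 0$ built into the graded highest weight axioms, then propagated by dévissage along the $\Delta$- and $\nabla$-flags of general objects of the heart.

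For $\Kb\Tmixc(X) \to \Db\Pmixc(X)$, I would invoke the classical Happel--Ringel argument: if every object of $\Pmixc(X)$ admits a finite resolution by objects of $\Tmixc(X)$, and $\Ext^{>0}_{\Pmixc(X)}(\cT, \cT') = 0$ for all $\cT, \cT' \in \Tmixc(X)$, then the natural functor is an equivalence. The Ext-vanishing is immediate from the existence of $\Delta$- and $\nabla$-flags on tilting objects together with $\Ext^{>0}(\Delta, \nabla) = 0$. For the tilting resolutions, I would induct on the support: given $\cF \in \Pmixc(X)$ supported on a finite closed union of strata $Z$, choose a maximal $s \in Z$ with $i_s^*\cF \neq 0$, construct a map from a sum of Tate twists of $\cT_s$ to $\cF$ whose restriction to the stratum $X_s$ is surjective, and iterate on the kernel (whose support is strictly smaller).

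The main obstacle will be the finite tilting resolution: one must verify that the inductive step terminates and produces a complex in $\Kb\Tmixc(X)$. This is guaranteed by the support finiteness of BMP sheaves, which forces any object of $\Pmixc(X)$ to be supported on a finite closed union of strata, even when $X$ itself is infinite. Once these resolutions are in place, the remaining formal verifications follow exactly as in~\cite{AR-II}.
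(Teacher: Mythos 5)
The paper gives no proof of this lemma at all---it is one of the statements transported from \cite{AR-II} ``with only notation changes''---so I am comparing your argument with the standard one of [AR-II, Lemma~3.15]. Your overall strategy and your identification of the key inputs (Proposition~\ref{prop:ghw-structure} and Lemma~\ref{lem:vanishing-delta-nabla-constructible}) are correct, but two of your intermediate steps fail as stated. First, for the realization functor you propose to verify Beilinson's criterion by ``dévissage along the $\Delta$- and $\nabla$-flags of general objects of the heart''; but general objects of a graded highest weight category admit no such flags (already the simples $\IC_s$ typically have neither), so this dévissage only reaches pairs $(\cF,\cG)$ with $\cF$ standardly filtered and $\cG$ costandardly filtered, not arbitrary pairs. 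Second, and more seriously, it is false that every object of $\Pmixc(X)$ admits a finite exact tilting resolution, and your inductive construction does not get off the ground: in the $\mathsf{A}_1$ situation of \S\ref{ss:a1-computation}, one checks that $\Hom(\cT_s\la n\ra,\IC_s)=0$ for all $n$ (any nonzero map composed with $\IC_s\hookrightarrow\nabla_s$ would be a multiple of the surjection $\cT_s\twoheadrightarrow\nabla_s$, whose image is not contained in $\IC_s$), so $\IC_s$ is not a quotient of any sum of tilting objects and admits no tilting resolution whatsoever. Relatedly, the kernel of a surjection from a sum of tiltings is not in general supported on a strictly smaller closed set.

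The correct route, which is the one in \cite{AR-II}, replaces ``every object has a finite tilting resolution'' by the weaker statement that $\Tmixc(X)$ generates $\Dmixc(X)$ (resp.\ $\Db\Pmixc(X)$) as a triangulated category: one shows by induction on the poset that each $\Delta_s$ admits a finite coresolution by tiltings whose cokernels are filtered by $\Delta_t\la n\ra$ with $t<s$ (dually for $\nabla_s$), and that the $\Delta_s\la n\ra$ generate $\Dmixc(X)$ by recollement and {\bf (A2)}. Together with the vanishing $\Hom_{\Dmixc(X)}(\cT,\cT'[i])=0$ for $i\neq 0$ (which \emph{does} follow by your dévissage, since tiltings have both flags), this proves that the composite $\Kb\Tmixc(X)\to\Dmixc(X)$ is an equivalence; the same argument run inside $\Db\Pmixc(X)$, using $\Ext^{>0}_{\Pmixc(X)}(\cT,\cT')=0$ from the highest weight axioms, shows $\Kb\Tmixc(X)\to\Db\Pmixc(X)$ is an equivalence; and the realization functor is then an equivalence by two-out-of-three, with no need to check Beilinson's criterion on arbitrary objects of the heart.
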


\section{Bruhat moment graphs}
\label{s:bruhat}

In this section, we apply the general theory above to (partial) Bruhat graphs.

Throughout this section, fix a Coxeter system $(W, S)$ with $|S| < \infty$. Denote the identity element by $1$. Let $T \subset W$ denote the set of reflections (conjugates of simple reflections). For $I \subset S$, let $W_I \le W$ denote the subgroup generated by $I$. We say that $I$ is \emph{finitary} if $W_I$ is finite.

We also fix a field $\bk$ of characteristic not equal to 2, and a reflection faithful representation $\fh$ of $(W, S)$ over $\bk$, in the following sense.
\begin{defn}[\cite{Soe07}, Definition~1.5] \label{defn:reflection-faithful}
 A \emph{reflection faithful} representation $\fh$ of $(W, S)$ is a faithful, finite dimensional representation of $W$ such that for all $w \in W$, the fixed subspace $V^w$ has codimension 1 if and only if $w \in T$.
\end{defn}
We call the data $(W, \fh)$ a \emph{(reflection faithful) realization}.

\subsection{BMP sheaves and Soergel bimodules}
\label{ss:bmp-sbim}
Associated to $(W, \fh)$ will be two different classes of categories: moment-graph-theoretic and Soerge-theoretic. Pro\-position~\ref{prop:bmp-sbim} relates the two; this is a slight extension of Fiebig's results~\cite{Fie-verma, Fie-coxeter}.

We start with the moment graph side. For each $t \in T$, choose a nonzero form $\alpha_t \in V^*$ vanishing on $V^t$. Each $\alpha_t$ is unique up to a nonzero scalar, and everything that follows will be independent of this choice.

\begin{defn} (\cite[\S2.2]{Fie-verma})
 Let $I \subset S$ be finitary. The \emph{(partial) Bruhat (moment) graph} $\cP^I = \cP^I(W, \fh)$ is the following moment graph over $\fh$:
 \begin{itemize}
  \item vertices $W/W_I$, partially ordered by the induced Bruhat order;
  \item edge between $p, q \in W/W_I$ labeled $\alpha_t$, whenever $p = tq$ for some $t \in T$;
  \item dimension function
   \[
    p \mapsto d_p := \min \{\ell(w) \mid w \in p\}\colon W/W_I \to \Z.
   \]
 \end{itemize}
\end{defn}
In what follows, whenever we write $I$, we mean either $I = \emptyset$ or $I = \{s\}$ ($s \in S$). As a general rule, we will omit $\emptyset$ from the notation, and write $s$ instead of $\{s\}$. For example, $W_s = \{1, s\}$ and $\cP^s = \cP^{\{s\}}$. We write $\cB$ for $\cP^\emptyset$.

Much of the discussion below should generalize to any finitary $I$. We hope to return to this in a future work.

Let $s \in S$. The natural quotient map $W \to W/W_s$ defines a map of moment graphs $\pi^s\colon \cB \to \cP^s$. By Theorem~\ref{thm:bmp-classification}, every object of $\BMPe(\cP^I)$ is isomorphic to a finite direct sum of shifts of indecomposable BMP sheaves $\cE_p^I$ for $p \in W/W_I$, characterized by the following properties: $(\cE_p^I)^q = 0$ unless $q \le p$ (support condition); $(\cE_p^I)^p \cong R\{d_p\}$ (``perverse'' normalization).

Next, we recall Soergel-theoretic notions associated to $(W, \fh)$. The $W$ action on $\fh$ induces a $W$-action on $R$. For $s \in S$, let $R^s$ denote the $s$-invariants. Functors of restriction and induction
\[
 \theta^s_*\colon R\lh\gmod\lh R \to R\lh\gmod\lh R^s, \quad \theta^{s*}\colon R\lh\gmod\lh R^s \to R\lh\gmod\lh R
\]
satisfy the adjunctions
\begin{equation} \label{eq:theta-adj}
 \theta^{s*} \dashv \theta^s_* \dashv \theta^{s*}\{2\}
\end{equation}
(see~\cite[Proposition~5.10]{Soe07}).

Let $\SBim^I$ be the full subcategory of $R\lh\gmod\lh R^I$ of (singular) Soergel bimodules~\cite[Definition~7.1]{Wil-ssbim}. By the classification theorem~\cite[Theorem~7.10]{Wil-ssbim}, every object of $\SBim^I$ is isomorphic to a finite direct sum of shifts of indecomposable Soergel bimodules $B_p^I$ for $p \in W/W_I$, again characterized by a support condition and a normalization.

\begin{prop} \label{prop:bmp-sbim}
 Let $s \in S$. The pullback $\pi^{s*}\colon \She(\cP^s) \to \She(\cB)$ (see Definition~\ref{def:pullback}) preserves BMP sheaves. Moreover, there are functors $F, F^s, \pi^s_*$ as in the following diagram
 \[
  \xymatrix{
   \BMPe(\cB) \ar@<-0.6ex>[d]_{\pi^s_*} \ar[r]^-{F} & \SBim \ar@<-0.6ex>[d]_{\theta^s_*} \\
   \BMPe(\cP^s) \ar@<-0.6ex>[u]_{\pi^{s*}} \ar[r]^-{F^s} & \SBim^s \ar@<-0.6ex>[u]_{\theta^{s*}}
  }
 \]
 satisfying the following conditions:
 \begin{enumerate}
  \item All functors commute with $\{1\}$ (up to natural equivalence);
  \item $F$ and $F^s$ are equivalences;
  \item $F^s \circ \pi^s_* \cong \theta^s_* \circ F$ and $F \circ \pi^{s*} \cong \theta^{s*} \circ F^s$;
  \item $F(\cE_w) \cong B_w$ for $w \in W$, and $F^s(\cE_p^s) \cong B_p^s$ for $p \in W/W_s$.
 \end{enumerate}
 It follows from (1)--(3) and \eqref{eq:theta-adj} that $\pi^{s*} \dashv \pi^s_* \dashv \pi^{s*}\{2\}$, so $\pi^s\colon \cB \to \cP^s$ is PSS-1 (see Definition~\ref{defn:pss}).
\end{prop}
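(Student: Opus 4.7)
The plan is to take $F$ and $F^s$ to be Fiebig's equivalences~\cite{Fie-verma,Fie-coxeter}, verify that $\pi^{s*}$ preserves the BMP class directly, identify it with $\theta^{s*}$ under these equivalences by a sections computation, and then \emph{define} $\pi^s_*$ so that the remaining commutativity is tautological. The adjunctions then transport through the equivalences from~\eqref{eq:theta-adj}.

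In more detail, Fiebig's equivalences are built from the global sections functor, carrying an $R$-bimodule structure coming from the structure algebra; they are equivalences of graded additive categories, so condition~(1) for $F, F^s$ and condition~(2) are automatic, while the matching of indecomposables in~(4) comes from comparing the support-and-normalization characterizations on the two sides. Next, Definition~\ref{def:pullback} gives $(\pi^{s*}\cF)^w = \cF^{wW_s}$ for $w \in W$. Axioms (BMP1) and (BMP2) on $\pi^{s*}\cF$ are immediate from those of $\cF$: the restriction along an edge $E\colon w\text{---}ws$ collapsed by $\pi^s$ is the defining quotient $\cF^{wW_s} \twoheadrightarrow \cF^{wW_s}/\alpha_{wsw^{-1}}\cF^{wW_s}$, and along a non-collapsed edge it is pulled back from $\cP^s$ where (BMP2) holds.

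For (BMP3')--(BMP4'), the key point is a natural isomorphism of $R$-bimodules
\begin{equation*}
 \Gamma(\cI, \pi^{s*}\cF) \;\cong\; R \otimes_{R^s} \Gamma(\pi^s(\cI), \cF)
\end{equation*}
for any upward-closed $\cI \subset \cV(\cB)$; surjectivity of restriction on the pullback then follows from the corresponding surjectivity on $\cF$ by right-exactness of $R \otimes_{R^s} (-)$. Applied with $\cI = \cV(\cB)$, the same identification yields $F \circ \pi^{s*} \cong \theta^{s*} \circ F^s$, proving the second isomorphism in~(3). I would then \emph{define} $\pi^s_* := (F^s)^{-1} \circ \theta^s_* \circ F$; this is well-defined because $\theta^s_*$ preserves Soergel bimodules, and the first isomorphism in~(3) becomes tautological. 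Condition~(1) for the remaining functors is forced, and the adjunctions $\pi^{s*} \dashv \pi^s_* \dashv \pi^{s*}\{2\}$ transport from~\eqref{eq:theta-adj} via $F, F^s$ together with the isomorphism $F \circ \pi^{s*} \cong \theta^{s*} \circ F^s$, so $\pi^s$ is PSS-1.

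The main obstacle will be the sections identification above, which requires carefully matching the structure algebra of $\cB$ with the pullback of the structure algebra of $\cP^s$, and tracking the $R$-bimodule structure through Fiebig's construction. Once this is in hand, everything else is either Fiebig's theorem or formal transport through an equivalence of categories.
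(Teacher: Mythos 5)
Your overall strategy (run everything through Fiebig's global-sections functors and transport the adjunctions from \eqref{eq:theta-adj}) is the same as the paper's, but two of your key steps do not hold up as written.

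First, the sections identification $\Gamma(\cI, \pi^{s*}\cF) \cong R \otimes_{R^s} \Gamma(\pi^s(\cI), \cF)$ is false for a general upward-closed $\cI$. It can only hold when $\cI$ is a union of $\pi^s$-fibers: if $\cI$ meets a fiber $\{x, xs\}$ (with $x < xs$) only in $xs$, then $\Gamma$ over that part of $\cI$ contributes a single copy of $\cF^{\overline{x}}$, whereas $R \otimes_{R^s} \cF^{\overline{x}}$ has twice the graded rank (already for $W = \{1,s\}$, $\cI = \{s\}$, $\cF = \ubk$, the left side is $R$ and the right side is $R \otimes_{R^s} R \cong R \oplus R\{-2\}$). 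Since the sets $\{\ge x\}$ and $\{> x\}$ appearing in (BMP3')--(BMP4') are generically not unions of fibers, your mechanism for those axioms collapses. The correct statement (essentially \cite[Lemma~5.4]{Fie-coxeter}) is only about global sections, i.e.\ $\cI = \cV(\cB)$, which does give $F \circ \pi^{s*} \cong \theta^{s*} \circ F^s$ but not the local surjectivities. The paper sidesteps a direct axiom check entirely: it works in Fiebig's exact structure on the category $\Cref$ of reflexive $\cZ$-modules, where BMP sheaves are exactly the projective objects, shows that the module-theoretic $\pi^{s*}$ and $\pi^s_*$ are exact and each is left adjoint to an exact functor (hence preserves projectives), and separately identifies the module-theoretic $\pi^{s*}$ with the pullback on objects admitting a Verma flag.

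Second, you take ``$F^s$ is an equivalence $\BMPe(\cP^s) \simto \SBim^s$'' as given by Fiebig, but this singular case is precisely the new content of the proposition (the paper calls it ``a slight extension of Fiebig's results''). Fiebig proves the regular case; for $I = \{s\}$ one must redo his argument $\cV^s \simto \cF_\nabla^s$ (Proposition~\ref{prop:verma-nabla}) using Williamson's singular Soergel bimodules, and then show the equivalence restricts to $\BMPe(\cP^s) \simto \SBim^s$ via the characterization of both sides as the smallest subcategories containing the unit and closed under $\pi^s_*, \pi^{s*}$ (resp.\ $\theta^s_*, \theta^{s*}$), sums, shifts, and summands. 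That characterization argument itself needs the fact that $\pi^{s*}$ and $\pi^s_*$ preserve BMP sheaves, so it cannot be postponed. Your definition $\pi^s_* := (F^s)^{-1}\circ\theta^s_*\circ F$ and the transport of adjunctions are fine once the equivalences and the identification of $\pi^{s*}$ are in hand, and the same is true of items (1), (2), and (4); but as the argument stands, the hard parts are assumed rather than proved.
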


To work with constructible mixed derived categories, we need the following result.
\begin{prop} \label{prop:bruhat-V}
 The moment graphs $\cB$ and $\cP^s$ ($s \in S$) satisfy the condition~{\bf (V)}.
\end{prop}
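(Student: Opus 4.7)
Condition~{\bf (V)} unpacks to the following: for each vertex $p$ of $X \in \{\cB, \cP^s\}$ and each BMP sheaf $\cF$ on $X$, the partial costalk
\[
 (i_p^{[!]}\cF)^p = \{ m \in \cF^p \mid \rho^\cF_{p, E}(m) = 0 \text{ for all } E \in \cE_{\delta p} \}
\]
is a finite-rank graded free $R$-module; the other BMP axioms on the one-vertex graph $X_p$ are automatic. By Theorem~\ref{thm:bmp-classification} and additivity, it suffices to verify this when $\cF$ is one of the indecomposables $\cE_w$ on $\cB$ or $\cE_q^s$ on $\cP^s$.

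The plan is to transport the question to the Soergel-bimodule side via Proposition~\ref{prop:bmp-sbim}. Under the equivalence $F\colon \BMPe(\cB) \simto \SBim$, the partial-costalk assignment $\cF \mapsto (i_w^{[!]}\cF)^w$ corresponds to extracting a graded subquotient in a ``Bruhat filtration'' of $F(\cF)$, and condition~{\bf (V)} for $\cB$ becomes the statement that every Soergel bimodule admits a Verma flag whose subquotients are graded free $R$-modules. On the bimodule side this is Soergel's structure theorem~\cite{Soe07}; the precise moment-graph dictionary is worked out by Fiebig in~\cite{Fie-coxeter}, so the case $X = \cB$ of Proposition~\ref{prop:bruhat-V} reduces to his theorem. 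For $X = \cP^s$ I would argue in exactly the same way via $F^s\colon \BMPe(\cP^s) \simto \SBim^s$ and Williamson's extension~\cite{Wil-ssbim} of Soergel's structure theorem to singular Soergel bimodules.

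The conceptual heart, and the main obstacle, is the dictionary translating ``kernel of all upward edge-restrictions at $p$'' on the moment-graph side into ``subquotient of a standard filtration at $p$'' on the Soergel bimodule side; once it is in place, {\bf (V)} reduces cleanly to the (singular) Soergel structure theorems. As an alternative to invoking the singular case, one could try to deduce {\bf (V)} for $\cP^s$ from the case of $\cB$ via the PSS-$1$ morphism $\pi^s$: for $\cG \in \BMPe(\cP^s)$, the pullback $\pi^{s*}\cG$ is BMP on $\cB$, and the partial costalk at $p = \{w, ws\}$ (with $w < ws$) should match a partial-costalk-type subspace of $(\pi^{s*}\cG)^{w}$ and $(\pi^{s*}\cG)^{ws}$. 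This requires bookkeeping, since an upward edge at $p$ in $\cP^s$ may arise from an upward cross-coset edge at either representative $w$ or $ws$ in $\cB$, but would otherwise bypass any need for Williamson's singular result.
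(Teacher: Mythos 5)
Your proposal is correct and follows essentially the same route as the paper: \S\ref{ss:bmp-sbim-proof} proves Proposition~\ref{prop:bruhat-V} precisely by establishing Fiebig's dictionary $F^I\colon \cV^I \simto \cF^I_\nabla$ between Verma-flag objects and nabla-flag (singular) Soergel bimodules (Proposition~\ref{prop:verma-nabla}, i.e.\ \cite[Theorem~4.3]{Fie-coxeter} extended via \cite{Wil-ssbim}), and then concluding $\BMPe(\cP^I) \subset \cV^I$. The only cosmetic difference is that the paper reaches this last inclusion by generating BMP sheaves from $\cE_1$ under $\pi^s_*$, $\pi^{s*}$, sums, shifts, and summands (with $\cV^I$ closed under these since $\theta^s_*$, $\theta^{s*}$ preserve nabla flags), rather than by invoking the classification of indecomposables together with the structure theorem for Soergel bimodules as you do.
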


\subsection{Proof of Propositions~\ref{prop:bmp-sbim} and \ref{prop:bruhat-V}}
\label{ss:bmp-sbim-proof}
Since only the statements of these results will be used elsewhere, and since most of this already appears in~\cite{Fie-verma, Fie-coxeter}, we merely review Fiebig's results and indicate the necessary modifications. In particular, we will not review the necessary moment-graph- and Soergel-theoretic notions from~\cite{Fie-verma, Fie-coxeter, Soe07, Wil-ssbim}.

Let us get started. As with any moment graph, global sections gives a functor
\[
 \Gamma\colon \She(\cP^I) \to \cZ^I\lh\gmod
\]
to graded modules over the structure algebra $\cZ^I = \cZ(\cP^I)$. Since $\cP^I$ is quasi-finite~\cite[Lemma~3.2(b)]{Fie-verma}, by~\cite[Theorem~3.6]{Fie-verma} and the discussion in \cite[\S3.7]{Fie-verma}, $\Gamma$ is fully faithful on the full subcategory of objects ``generated by global sections.'' This gives a category $\cC^I$ that will be viewed as a full subcategory of both $\She(\cP^I)$ and $\cZ^I\lh\gmod$.

The latter viewpoint relates more directly to Soergel bimodules. The following lemma gathers several statements proved in~\cite[\S5]{Fie-coxeter}.
\begin{lem} \label{lem:Zmod-RmodR}
 There are functors $F, F^s, \pi^s_*$, and $\pi^{s*}$ as in the following diagram
 \[
  \xymatrix{
   \cZ\lh\gmod \ar@<-0.6ex>[d]_{\pi^s_*} \ar[r]^-{F} & R\lh\gmod\lh R \ar@<-0.6ex>[d]_{\theta^s_*} \\
   \cZ^s\lh\gmod \ar@<-0.6ex>[u]_{\pi^{s*}} \ar[r]^-{F^s} & R\lh\gmod\lh R^s \ar@<-0.6ex>[u]_{\theta^{s*}}
  }
 \]
 satisfying the following conditions:
 \begin{enumerate}
  \item All functors commute with $\{1\}$;
  \item $F^s \circ \pi^s_* \cong \theta^s_* \circ F$, $F \circ \pi^{s*} \cong \theta^{s*} \circ F^s$;
  \item $\pi^{s*} \dashv \pi^s_* \dashv \pi^{s*}\{2\}$ and $\theta^{s*} \dashv \theta^s_* \dashv \theta^{s*}\{2\}$.
 \end{enumerate}
\end{lem}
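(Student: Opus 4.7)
The plan is to follow Fiebig's constructions in \cite{Fie-verma, Fie-coxeter} relating modules over structure algebras of Bruhat graphs to $R$-bimodules, and then verify the claimed properties by direct inspection. All four functors will be built from the presentations $\cZ^I = \Gamma(\cP^I, R) \subset \prod_{p \in W/W_I} R$ (cut out by the edge-label congruences) and the morphism of structure algebras induced by $\pi^s$.

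First I would set up $F$ and $F^s$. The algebra $\cZ^I$ carries a canonical $R$-bimodule structure: the left action comes from the diagonal embedding $R \hookrightarrow \cZ^I$, and the right action acts on the $p$-indexed component by twisting with a coset representative $w \in p$. For $I = \emptyset$ this gives a right $R$-action; for $I = \{s\}$ it descends to a right $R^s$-action, since $w(r)$ for $r \in R^s$ depends only on $wW_s$. The functors $F$ and $F^s$ are defined on objects by equipping each structure-algebra module with the induced bimodule structure.

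Next I would set up $\pi^s_*$ and $\pi^{s*}$ and verify items (1) and (2). The moment-graph morphism $\pi^s \colon \cB \to \cP^s$ induces an injective ring homomorphism $\varphi\colon \cZ^s \hookrightarrow \cZ$ by pullback of sections. Set $\pi^s_* := \varphi^*$ (restriction of scalars) and $\pi^{s*} := \cZ \otimes_{\cZ^s} (-)$. Property (1) is immediate. For (2), the isomorphism $F^s \circ \pi^s_* \cong \theta^s_* \circ F$ reduces to checking that the right $R$-action on $\cZ$ restricts along $R^s \hookrightarrow R$ to the right $R^s$-action on $\cZ^s$, which holds by construction. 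The isomorphism $F \circ \pi^{s*} \cong \theta^{s*} \circ F^s$ amounts to an isomorphism of $\cZ^s$-$R$-bimodules
\[
 \cZ \simto \cZ^s \otimes_{R^s} R.
\]

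Finally, for item (3), the Soergel-side adjunctions $\theta^{s*} \dashv \theta^s_* \dashv \theta^{s*}\{2\}$ are classical (see~\cite[Proposition~5.10]{Soe07}) and reflect the fact that $R$ is a graded Frobenius extension of $R^s$ of degree $2$. The moment-graph adjunctions $\pi^{s*} \dashv \pi^s_* \dashv \pi^{s*}\{2\}$ then follow by transferring this Frobenius structure along the bimodule identification displayed above. I expect the main obstacle to be establishing this last isomorphism precisely: one must split the edge relations defining $\cZ$ according to whether an edge $w\text{---}ws$ lies within a single $W_s$-coset or connects two distinct cosets. The inter-coset relations are exactly those defining $\cZ^s$, while the intra-coset relation $z_w - z_{ws} \in \alpha_{wsw^{-1}} R$ corresponds, via the right twist by a coset representative, to the free basis $\{1, \alpha_s\}$ of $R$ over $R^s$. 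Assembling these contributions gives the displayed isomorphism and completes the reduction to the classical Frobenius property of $R/R^s$.
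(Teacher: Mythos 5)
Your construction is correct and is essentially the approach the paper takes: the paper proves this lemma by simply citing~\cite{Fie-coxeter}, \S 5, and the functors you build (restriction along the diagonal and twisted embeddings of $R$ into the structure algebra, restriction/induction along $\cZ^s \hookrightarrow \cZ$, and the identification $\cZ \cong \cZ^s \otimes_{R^s} R$ reducing everything to the Frobenius extension $R/R^s$) are precisely Fiebig's. The one point worth making explicit in your sketch is that the surjectivity of $\cZ^s \otimes_{R^s} R \to \cZ$ --- i.e., that the two components obtained by decomposing a section along the basis $\{1, \delta\}$ of $R$ over $R^s$ again satisfy the inter-coset congruences defining $\cZ^s$ --- relies on the GKM property of $\cB$ (non-proportionality of the labels $\alpha_t$ and $\alpha_{wsw^{-1}}$ at a vertex), which is exactly where reflection faithfulness enters.
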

At this stage, the functor $\pi^{s*}$ is unrelated to the pullback $\pi^{s*}\colon \She(\cP^s) \to \She(\cB)$.

To relate this to BMP sheaves, we need to introduce more categories. Let $\Cref^I \subset \cZ^I\lh\gmod$ be the full subcategory of ``reflexive'' objects, and let $\cV^I \subset \Cref^I$ be the full subcategory of objects that admit a Verma flag~\cite[\S4]{Fie-verma}. Since $(W, \fh)$ is reflection faithful, $\cB$ (resp.~$\cP^s$) is GKM~\cite[\S4.4]{Fie-verma}. It follows by~\cite[Proposition~4.6]{Fie-verma} that $\Cref^I \subset \cC^I$, so we have containments
\[
 \cV^I \subset \Cref^I \subset \cC^I.
\]
On the Soergel side, let $\cF_\nabla^I \subset R\lh\gmod\lh R^I$ be the full subcategory of objects that admit a nabla flag~\cite[Definition~6.1]{Wil-ssbim}.
\begin{prop} \label{prop:verma-nabla}
 The functor $F^I$ restricts to an equivalence
 \[
  F^I\colon \cV^I \simto \cF_\nabla^I.
 \]
\end{prop}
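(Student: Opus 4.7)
The plan is to handle the regular case $I = \emptyset$ by invoking Fiebig's analysis in~\cite{Fie-verma, Fie-coxeter}, and then to deduce the singular case $I = \{s\}$ via the commutative squares of Lemma~\ref{lem:Zmod-RmodR}. Full faithfulness of $F^I$ on $\cV^I$ is inherited from its known full faithfulness on $\cC^I \supset \cV^I$, so the content of the proposition consists of two statements: (a) $F^I$ sends Verma flags to nabla flags, and (b) $F^I\colon \cV^I \to \cF_\nabla^I$ is essentially surjective.

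The regular case is essentially Fiebig's result, once translated from his language into the present notation: the key computation is that $F$ maps each Verma module $V(w) \in \cV$ ($w \in W$) to the standard bimodule $R \otimes_{R^w} R$ (equivalently, $R$ with right action twisted by $w$), which is the building block of nabla flags. From this, (a) for $I = \emptyset$ follows because both Verma and nabla flags are filtrations whose successive quotients are these respective standard objects, while (b) for $I = \emptyset$ is proved by induction on the length of a nabla flag: given $0 \to M' \to M \to R \otimes_{R^w} R\{n\} \to 0$ with $M'$ already lifted to $\widetilde{M'} \in \cV$, the full faithfulness of $F$ on $\cC$ lifts the extension class to $\cZ\lh\gmod$, and the resulting object is verified to lie in $\cV$ by a stalk-level check against the Verma-flag criterion of~\cite[\S5]{Fie-verma}.

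For the singular case, I would use the intertwiners $F^s \circ \pi^s_* \cong \theta^s_* \circ F$ and $F \circ \pi^{s*} \cong \theta^{s*} \circ F^s$ from Lemma~\ref{lem:Zmod-RmodR}(2), together with the preservation statements
\[
 \pi^s_*(\cV) \subset \cV^s, \quad \pi^{s*}(\cV^s) \subset \cV, \quad \theta^s_*(\cF_\nabla) \subset \cF_\nabla^s, \quad \theta^{s*}(\cF_\nabla^s) \subset \cF_\nabla
\]
(the moment-graph versions following from~\cite{Fie-coxeter}; the bimodule versions from~\cite{Wil-ssbim}). A direct calculation using the explicit structure of the adjunction $\pi^{s*} \dashv \pi^s_* \dashv \pi^{s*}\{2\}$ yields $\pi^s_* \pi^{s*} N \cong N \oplus N\{-2\}$ for any $N \in \cZ^s\lh\gmod$, and similarly on the Soergel side. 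Hence for (a) in the singular case, any $N \in \cV^s$ is a summand of $\pi^s_* \pi^{s*}N$, so $F^s(N)$ is a summand of $\theta^s_* F(\pi^{s*}N) \in \cF_\nabla^s$. For (b), given $M \in \cF_\nabla^s$, lift $\theta^{s*}M \in \cF_\nabla$ to $V \in \cV$ via the regular equivalence, so $F^s(\pi^s_* V) \cong \theta^s_* F(V) \cong M \oplus M\{-2\}$; full faithfulness of $F^s$ on $\cC^s$ then transports the idempotent projecting onto $M$ back to $\cV^s$, producing a lift of $M$.

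The main obstacle will be verifying the preservation statements $\pi^s_*(\cV) \subset \cV^s$ and $\pi^{s*}(\cV^s) \subset \cV$, which require understanding how Verma flags interact with the singular-regular base change on Bruhat graphs. For $\pi^{s*}$ the idea is to check that a Verma flag on $\cP^s$ pulls back stratum-by-stratum: each stratum of $\cP^s$ has preimage at most two strata of $\cB$, so the analysis is local and explicit. The preservation under $\pi^s_*$ should then follow formally by combining this with the summand decomposition $\pi^s_*\pi^{s*} \cong \id \oplus \{-2\}$ above. Once these preservation statements are in hand, the rest of the singular case is a formal consequence of the regular case via the commutative diagram.
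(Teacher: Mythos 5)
Your strategy for the singular case is genuinely different from the paper's. The paper simply reruns Fiebig's proof of the regular case (\cite[Theorem~4.3]{Fie-coxeter}) with the regular Soergel-theoretic inputs replaced by their singular analogues from~\cite{Wil-ssbim} (extensions of singular standard modules, the singular ``hin-und-her'' lemma), whereas you propose to bootstrap the singular equivalence from the regular one using the intertwining squares of Lemma~\ref{lem:Zmod-RmodR} and the decomposition $\pi^s_*\pi^{s*} \cong \id \oplus \{-2\}$. That reduction is attractive and most of it is sound, but as written it has two gaps.

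First, your derivation of $\pi^s_*(\cV) \subset \cV^s$ does not work: the isomorphism $\pi^s_*\pi^{s*} \cong \id \oplus \{-2\}$ only controls $\pi^s_*$ on objects in the essential image of $\pi^{s*}$, and a general object of $\cV$ (e.g.\ a single Verma module supported at one $x \in W$) is not a summand of a pullback, since pullbacks of Verma-flagged objects have flags indexed by full fibers $\{x, xs\}$. This preservation statement therefore needs an independent argument; it is proved directly in \cite[Proposition~5.5 and end of \S5.3]{Fie-coxeter}, and citing that is legitimate --- but note that the present paper deduces the Verma-flag preservation by $\pi^s_*$ and $\pi^{s*}$ \emph{from} the proposition you are proving, so you must take Fiebig's direct proof as input rather than that deduction. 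Second, both halves of your singular argument silently use that $\cV^s$ and $\cF_\nabla^s$ are closed under direct summands: in (a) to pass from ``$F^s(N)$ is a summand of an object of $\cF_\nabla^s$'' to ``$F^s(N) \in \cF_\nabla^s$,'' and in (b) to conclude that the summand of $\pi^s_* V$ splitting off $M$ still lies in $\cV^s$. Both closures are true (via Fiebig's graded-freeness criterion for admitting a Verma flag, and the $\Ext$-vanishing characterization of nabla flags in \cite{Soe07, Wil-ssbim}), but they are nontrivial and must be invoked explicitly. Finally, a small slip in your sketch of the regular case: the standard bimodule attached to $w$ is $R_w$, i.e.\ $R$ with the right action twisted by $w$, not $R \otimes_{R^w} R$; your parenthetical shows you intend the correct object. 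With these repairs your reduction gives a valid alternative to the paper's proof, trading Williamson's singular $\Ext$ computations for the push--pull formalism.
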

\begin{proof}
 For the regular case ($I = \emptyset$), this is~\cite[Theorem~4.3]{Fie-coxeter}. It is straightforward to extend Fiebig's proof to the case $I = \{s\}$, $s \in S$, by replacing the results used from regular Soergel theory by their singular analogues from~\cite{Wil-ssbim}. In particular, results on extensions of singular standard modules may be reduced to the regular case using~\cite[Theorem~4.10]{Wil-ssbim}; see~\cite[Lemmas~6.8--6.10]{Wil-ssbim} and the discussion preceeding these lemmas. One also uses the singular ``hin-und-her'' lemma~\cite[Lemma~6.2]{Wil-ssbim}.
\end{proof}

By~\cite[Theorem~6.4]{Wil-ssbim}, $\theta^s_*$ and $\theta^{s*}$ preserve objects that admit a nabla flag. It follows from Proposition~\ref{prop:verma-nabla} that $\pi^s_*$ and $\pi^{s*}$ preserve objects that admit a Verma flag. (This is also proved directly in~\cite{Fie-coxeter}: see the paragraph at the end of \S5.3 and Proposition~5.5.) Thus we have the diagram
\[
 \xymatrix{
  \cV \ar@<-0.6ex>[d]_{\pi^s_*} \ar[r]^-{F}_-{\sim} & \cF_\nabla \ar@<-0.6ex>[d]_{\theta^s_*} \\
  \cV^s \ar@<-0.6ex>[u]_{\pi^{s*}} \ar[r]^-{F^s}_-{\sim} & \cF_\nabla^s \ar@<-0.6ex>[u]_{\theta^{s*}}
 }
\]
where both squares commute (up to natural isomorphism). Moreover, for objects in $\cV^s$, $\pi^{s*}$ agrees with the pullback~\cite[Lemma~5.4]{Fie-coxeter}.

It follows from~\cite[Proposition~5.3]{Fie-coxeter} that $\pi^s_*$ and $\pi^{s*}$ also preserve reflexive objects:
\[
 \pi^s_*\colon \Cref \to \Cref^s, \quad \pi^{s*}\colon \Cref^s \to \Cref.
\]
Moreover, there is an exact structure on $\Cref^I$~\cite[\S4.1]{Fie-verma} so that the projective objects are precisely $\BMPe(\cP^I)$~\cite[Theorem~5.2]{Fie-verma}, and $\pi^s_*$ and $\pi^{s*}$ are exact~\cite[Proposition~5.6]{Fie-coxeter} (this is stated for $\cV^I$, but the proof also works for $\Cref^I$). Thus, by Lemma~\ref{lem:Zmod-RmodR}(3), each of $\pi^s_*$ and $\pi^{s*}$ is left adjoint to an exact functor, hence preserves projectives:
\[
 \pi^s_*\colon \BMPe(\cB) \to \BMPe(\cP^s), \quad \pi^{s*}\colon \BMPe(\cP^s) \to \BMPe(\cB).
\]

A support argument together with the classification of BMP sheaves now shows that $\BMPe(\cB)$ and $\BMPe(\cP^s)$ may be characterized as the smallest strictly full subcategories of $\Cref$ and $\Cref^s$ so that $\cE_e \in \BMPe(\cB)$ and that are closed under $\pi^s_*$, $\pi^{s*}$, finite direct sum, shift, and direct summand (cf.~\cite[proof of Theorem~6.1]{Fie-coxeter}). In particular, $\BMPe(\cB^I) \subset \cV^I$, which proves Proposition~\ref{prop:bruhat-V}.

Since $\SBim$ and $\SBim^s$ admit a similar characterization, it follows that the equivalences of Proposition~\ref{prop:verma-nabla} restrict to equivalences
\[
 F^I\colon \BMPe(\cP^I) \simto \SBim^I.
\]
This proves Proposition~\ref{prop:bmp-sbim}(1)--(3). Since $F^I$ is compatible with two notion of support in each category, (4) follows from the characterization of $\cE_p^I$ and $B_p^I$. This concludes the proof of Proposition~\ref{prop:bmp-sbim}.

\subsection{Convolution and perverse t-structure}
By transfering the tensor product $\otimes_R$ via the equivalences of Proposition~\ref{prop:bmp-sbim}, we obtain \emph{convolution functors}
\begin{align*}
 (-) \ast (-) \colon \BMPe(\cB) \times \BMPe(\cP^I) &\to \BMPe(\cP^I), \\
 (-) \ast (-) \colon \BMPc(\cB) \times \BMPe(\cP^I) &\to \BMPc(\cP^I),
\end{align*}
making $\BMPe(\cB)$ into a monoidal category with unit object $\cE_1$, with left module category $\BMPe(\cP^I)$ and right module category $\BMPc(\cB)$. Passing to the bounded homotopy category, we obtain
\begin{align*}
 (-) \ast (-) \colon \Dmixe(\cB) \times \Dmixe(\cP^I) &\to \Dmixe(\cP^I), \\
 (-) \ast (-) \colon \Dmixc(\cB) \times \Dmixe(\cP^I) &\to \Dmixc(\cP^I).
\end{align*}
Let $\cF \in \Dmixe(\cB)$ and $\cG \in \Dmixe(\cP^I)$. There are functorial isomorphisms
\begin{align}
 \For(\cF \ast \cG) &\cong \For(\cF) \ast \cG, \\
 \pi^*_s(\cF \ast \cG) &\cong \cF \ast \pi^{s*}(\cG) \quad \text{ if } I = \{s\}.
\end{align}
Each claim follow from the corresponding statement on the Soergel side.

By the discussion in~\S\ref{ss:constructible}, Proposition~\ref{prop:bruhat-V} implies that the theory of~\S\ref{s:recollement}--\ref{s:mixed-perverse-sheaves} applies to both $\Dmixe(\cP^I)$ and $\Dmixc(\cP^I)$. In particular, it makes sense to talk about standard sheaves $\Delta_p$, costandard sheaves $\nabla_p$, the categories of mixed perverse sheaves $\Pmixe(\cP^I)$, $\Pmixc(\cP^I)$, and simples $\IC_p$.

Let $s \in S$ and $x \in W$ be such that $xs > x$. Let $\overline{x} \in W/\{1, s\}$ be the image of $x$. Since $\pi^s\colon \cB \to \cP^s$ is PSS-$1$ (Proposition~\ref{prop:bmp-sbim}), by Lemma~\ref{lem:pss-delta-nabla} we have
\begin{align}\label{eqn:pi-delta-nabla}
 \pi^s_* \Delta_{xs} \cong \Delta_{\overline{x}}\{-1\}
 \quad\text{and}\quad
 \pi^s_* \Delta_x \cong \Delta_{\overline{x}}.
\end{align}
As in~\cite[Corollary~3.9--3.10]{AR-II}, it then follows from Proposition~4.4 that $\pi^s_*\{1\}$ is right t-exact, $\pi^s_*\{-1\}$ is left t-exact, $\pi^{s\ast}\{1\}$ is t-exact, and $\pi^{s\ast}\IC_{\overline{x}} \cong \IC_{xs}$.

\begin{rmk}
 As pointed out in~\cite[Remark 6.2]{EW-hodge}, if $(W, \fh)$ satisfies Soergel's conjecture, one obtains a t-structure on $\Kb\SBim$ by defining the heart to consist of complexes that are homotopy equivalent to complexes $\cF$ such that for all $i \in \Z$, $\cF^i$ is a direct sum of indecomposable Soergel bimodules $B_w\{i\}$. Via Proposition~\ref{prop:bmp-sbim}, this induces a t-structure on $\Dmixe(\cB)$. One can show using facts about exceptional collections (see e.g.~\cite[Section~2.1]{Bez}) that this t-structure agrees with the one defined using recollement.
\end{rmk}

\subsection{$\mathsf{A}_1$ computations}
\label{ss:a1-computation}
Let $s \in S$. Consider the closed sub-moment graph $Y_1^s = \{1, s\}$ of $\cB$, with unique edge $E\colon 1\text{---}s$ labeled $\alpha_s$. We will depict a moment graph sheaf $\cF$ on $\cB$ supported on $Y_1^s$ as follows:
\[
 \cF = \begin{array}{c} \xymatrix@R=13pt{\cF^s \ar[d] \\ \cF^E \\ \cF^1 \ar[u]} \end{array}
\]
It is easy to see that
\[
 \cE_1 = \ubk_{\cB_1} = \begin{array}{c} \xymatrix@R=13pt{0 \ar[d] \\ 0 \\ R \ar[u]} \end{array},
 \qquad
 \cE_s = \ubk_{Y^s_1}\{1\} := \begin{array}{c} \xymatrix@R=13pt{R\{1\} \ar[d]^q \\ (R/\alpha_s)\{1\} \\ R\{1\} \ar[u]_q} \end{array},
\]
where $q$ is the natural quotient map.

Let $\eta_s\colon \cE_1\{-1\} \to \cE_s$ and $\epsilon_s\colon \cE_s \to \cE_1\{1\}$ be the following morphisms:
\[
 \begin{array}{c}\xymatrix@R=13pt{
  0 \ar[d] \ar[r] & R\{1\} \ar[d]^q \\
  0 \ar[r] & (R/\alpha_s)\{1\} \\
  R\{-1\} \ar[u] \ar[r]^{\alpha_s} & R\{1\} \ar[u]_q.
 }\end{array}, \qquad
 \begin{array}{c}\xymatrix@R=13pt{
  R\{1\} \ar[d]^q \ar[r] & 0 \ar[d] \\
  (R/\alpha_s)\{1\} \ar[r] & 0 \\
  R\{1\} \ar[u]_q \ar[r]^1 & R\{1\} \ar[u].
 }\end{array}.
\]
We have
\[
 \Hom_{\BMPe(\cB)}(\cE_1\{-1\}, \cE_s) = \bk \cdot \eta_s, \qquad \Hom_{\BMPe(\cB)}(\cE_s, \cE_1\{1\}) = \bk \cdot \epsilon_s.
\]

Consider the inclusions
\[
 \cB_1 \overset{i}{\hookrightarrow} Y^s_1 \overset{j}{\hookleftarrow} \cB_s.
\]
Recall the naive sheaf functors from~\S\ref{ss:naive-functors}. The unit of $i_* \dashv i^{[!]}$ (resp.~the counit of $i^{[*]} \dashv i_*$) at $\cE_s$ equals
\[
 \eta_s\colon \cE_1\{-1\} = i_!i^{[!]}\cE_s \to \cE_s \qquad (\text{resp.~}\epsilon_s\colon \cE_s \to i_*i^{[*]}\cE_s = \cE_1\{1\}).
\]
From the definition of $j_!$ (resp.~$j_*$) in~\S\ref{ss:recollement}, we find that $\Delta_s$ (resp.~$\nabla_s$) in $\Dmix(\cB)$ is the image of the complex
\[
 \cE_1\{-1\} \xrightarrow{\eta_s} \cE_s \qquad (\text{resp.~}\cE_s \xrightarrow{\epsilon_s} \cE_1\{1\}),
\]
where $\cE_s$ is in degree 0.

\subsection{Soergel-theoretic graded category $\cO$}
\label{ss:consequences}
With the results above in place, we can prove analogues of many of the results from~\cite{AR-II} on flag varieties.

\begin{lem}[cf.~\cite{AR-II}, Lemma~4.1]
 Let $s \in S$ and $x \in W$ with $xs > x$. Consider the morphisms
 \begin{align*}
  \eta &\colon \Delta_{xs} \to \pi^{s!}\pi^s_! \Delta_{xs} \simto \pi^{s!}\Delta_{\overline{x}}\{ -1 \} \simto \pi^{s*}\pi^s_* \Delta_x\{1\} \\
  \varepsilon &\colon \pi^{s*}\pi^s_* \Delta_x\{1\} \to \Delta_x\{1\},
 \end{align*}
 where the isomorphisms in $\eta$ are defined by \eqref{eqn:pi-delta-nabla}, and the other two morphisms are defined by adjunction. Then there exists a distinguished triangle
 \[
  \Delta_{xs} \xrightarrow{\eta} \pi^{s*}\pi^s_* \Delta_x\{1\} \xrightarrow{\varepsilon} \Delta_x\{1\} \to
 \]
in $\Dmixe(\cB)$.
\end{lem}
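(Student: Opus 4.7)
The plan is to reduce to the case $x = 1$ by convolving on the left with $\Delta_x$, and then to verify that case directly using the $\mathsf{A}_1$ computations of~\S\ref{ss:a1-computation} together with the recollement distinguished triangle on the sub-moment graph $Y_1^s = \{1,s\}$.

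For the reduction, since $\ell(xs) = \ell(x) + 1$, one has $\Delta_{xs} \cong \Delta_x \ast \Delta_s$ in $\Dmixe(\cB)$ (transferred via Proposition~\ref{prop:bmp-sbim} from the standard identity $R_x \otimes_R R_s \cong R_{xs}$ for standard Soergel bimodules), while trivially $\Delta_x \cong \Delta_x \ast \cE_1$. Combining this with the projection formula $\pi^{s*}(\cF \ast \cG) \cong \cF \ast \pi^{s*}\cG$ stated after the definition of convolution, together with its companion $\pi^s_*(\cF \ast \cH) \cong \cF \ast \pi^s_*\cH$ (which holds by the same argument, since $\theta^s_*$ only restricts the right $R$-action on bimodules), I obtain $\pi^{s*}\pi^s_*\Delta_x \cong \Delta_x \ast \pi^{s*}\pi^s_*\cE_1$. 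Hence the claimed triangle is the image under the triangulated endofunctor $\Delta_x \ast (-)$ of the $x = 1$ triangle
\[
 \Delta_s \xrightarrow{\eta_1} \pi^{s*}\pi^s_*\cE_1\{1\} \xrightarrow{\varepsilon_1} \cE_1\{1\} \to,
\]
so it suffices to treat the latter.

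For the $x = 1$ case, first compute the middle term: by~\eqref{eqn:pi-delta-nabla}, $\pi^s_*\cE_1 = \pi^s_*\Delta_1 \cong \Delta_{\overline{1}} = \cE^s_{\overline{1}}$. Applying Definition~\ref{def:pullback} to the skyscraper sheaf $\cE^s_{\overline{1}}$ shows that $\pi^{s*}\cE^s_{\overline{1}}$ is supported on $Y_1^s$ with value $R$ at each of the vertices $1$ and $s$, value $R/\alpha_s$ on the unique edge, and the natural quotients as restriction maps; so $\pi^{s*}\cE^s_{\overline{1}} \cong \ubk_{Y_1^s} = \cE_s\{-1\}$ by~\S\ref{ss:a1-computation}, and hence $\pi^{s*}\pi^s_*\cE_1\{1\} \cong \cE_s$. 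Now invoke the recollement distinguished triangle $j_! j^* \cE_s \to \cE_s \to i_* i^{[*]} \cE_s \to$ for the inclusions $i\colon \{1\} \hookrightarrow Y_1^s$ and $j\colon \{s\} \hookrightarrow Y_1^s$ (where $i^* \cong i^{[*]}$ by Remark~\ref{rmk:restriction-two-strata}). From the $\mathsf{A}_1$ picture, $j^*\cE_s = \ubk_{X_s}\{1\}$ so $j_! j^* \cE_s \cong \Delta_s$, and $i^{[*]}\cE_s = \ubk_{X_1}\{1\}$ so $i_* i^{[*]} \cE_s \cong \cE_1\{1\}$. This produces a distinguished triangle $\Delta_s \to \cE_s \to \cE_1\{1\} \to$ in $\Dmixe(\cB)$.

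The hard part will be identifying the maps of this recollement triangle with $\eta_1$ and $\varepsilon_1$. From the $\mathsf{A}_1$ picture, $\Hom_{\Dmixe(\cB)}(\cE_s, \cE_1\{1\}) = \bk \cdot \epsilon_s$, and a brief chain-level calculation using Proposition~\ref{prop:BMP-graded-free-HOM} and the vanishing $\Hom_{\BMPe(\cB)}(\cE_s, \cE_1\{-1\}) = 0$ (read off from the $\mathsf{A}_1$ description of $\Hom^\bullet(\cE_s, \cE_1)$) yields $\Hom_{\Dmixe(\cB)}(\Delta_s, \cE_s) \cong \bk$. Both maps in question therefore live in one-dimensional Hom-spaces. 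Nonvanishing of $\varepsilon_1$ follows from the zigzag identity $\pi^s_*\varepsilon_1 \circ \eta_{\pi^s_*\cE_1\{1\}} = \id$, which forces $\pi^s_*\varepsilon_1$ (hence $\varepsilon_1$ itself) to be nonzero; a parallel zigzag argument gives $\eta_1 \neq 0$. Each of $\eta_1$, $\varepsilon_1$ therefore agrees with the corresponding recollement map up to a nonzero scalar, so (after harmless rescaling) the triangle with these maps is distinguished, completing the $x = 1$ case and, by the reduction, the proof.
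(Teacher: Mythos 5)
Your $x=1$ argument is sound and is in fact the paper's own proof specialized to that case (the paper works on the two-vertex preimage $Y^s_x=\{x,xs\}$, identifies $\pi^s_*$ there with an explicit global-sections functor, applies the recollement triangle $j_!j^!\to\id\to i_*i^*\to$ to $\ubk_{Y^s_x}\{d_{xs}\}$, and then pushes forward along the locally closed inclusion $h\colon Y^s_x\hookrightarrow\cB$ via Proposition~\ref{prop:pss}; for $x=1$ the inclusion $h$ is closed and everything collapses to your computation). The problem is the reduction step. The isomorphism $\Delta_{xs}\cong\Delta_x\ast\Delta_s$ is exactly Proposition~\ref{prop:convo-facts}(1), which in the logical order of the paper (following \cite{AR-II}) comes \emph{after} this lemma and whose (omitted) proof is the induction in \cite{AR-II} that uses this lemma's distinguished triangle. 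So as written your argument is circular unless you supply an independent proof of $\Delta_{xs}\cong\Delta_x\ast\Delta_s$.

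The justification you offer does not do this. Under $\Kb\Phi$ the object $\Delta_x$ corresponds to the Rouquier complex $F_x$, \emph{not} to the standard bimodule $R_x$ ($R$ with the right action twisted by $x$); the latter is generally not even an object of $\SBim$, and the identity $R_x\otimes_R R_s\cong R_{xs}$ lives in a different category. Moreover, the identification $\Kb\Phi(\Delta_x)\cong F_x$ is Theorem~\ref{thm:std-rouquier}, itself proved by induction using Proposition~\ref{prop:convo-facts}, so invoking it here is again circular. (Your auxiliary claims are fine: $\pi^s_*(\cF\ast\cH)\cong\cF\ast\pi^s_*\cH$ does hold since $\theta^s_*$ merely restricts the right action, and the identification of the maps via one-dimensional Hom spaces is the same device the paper uses.) The repair is simply to drop the reduction and run your $x=1$ computation on $Y^s_x$ for general $x$: the only new ingredients are the explicit description of $\pi^s_*$ restricted to $Y^s_x\to\cP^s_{\overline x}$ as sections over $Y^s_x$, and the compatibility $h_!\circ(\text{sheaf functors on }Y^s_x)\cong(\text{sheaf functors on }\cB)\circ h_!$ from Proposition~\ref{prop:pss}.
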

\begin{proof}
 Let $Y^s_x = (\pi^s)^{-1}(\cP^s_{\overline{x}})$, so $\cV(Y^s_x) = \{x, xs\}$. Consider the following diagram:
 \[
  \xymatrix{
   Y^s_x \ar[d]_{\pi^s} \ar@{^(->}[r]^h & \cB \ar[d]^{\pi^s} \\
   \cP^s_{\overline{x}} \ar@{^(->}[r]_{i_{\overline{x}}} & \cP^s
  }
 \]
 By Proposition~\ref{prop:pss}, $\pi^{s*}\colon \Dmixe(\cP^s_{\overline{x}}) \to \Dmixe(Y^s_x)$ admits a biadjoint $\pi^s_*$ up to shift. Define the functor
 \begin{align*}
  \pi^s_{(*)}\colon \She(Y^s_x) &\to \She(\cP^s_{\overline{x}}) \\
  (\pi^s_{(*)}\cF)^{\overline{x}} &= \Gamma(Y^s_x, \cF),
 \end{align*}
 where the $R$-algebra structure on $\Gamma(Y^s_x, \cF)$ is coordinate-wise multiplication by the constant section. It is easy to show that $\pi^{s*} \dashv \pi^s_{(*)}$ and that $\pi^s_{(*)}$ preserves BMP sheaves, so $\pi^s_* \cong \pi^s_{(*)}$.

 Consider the inclusions $i\colon \cB_x \hookrightarrow Y^s_x$ and $j\colon \cB_{xs} \hookrightarrow Y^s_x$. As in~\S\ref{ss:a1-computation}, applying the functorial distinguished triangle $j_!j^! \to \id \to i_*i^* \to$ to the BMP sheaf $\ubk_{Y^s_x}\{d_{xs}\}$, we obtain a triangle
 \[
  \Delta_{Y^s_x, xs} \to \ubk_{Y^s_x}\{d_{xs}\} \to \Delta_{Y^s_x, x}\{1\} \to.
 \]
 Using the description $\pi^s_* \cong \pi^s_{(*)}$ above, the middle term can be identified with $\pi^{s*}\pi^s_*\Delta_x\{1\}$. Applying $h_!$ and using Proposition~\ref{prop:pss}, we obtain a distinguished triangle
 \[
  \Delta_{xs} \to \pi^{s*}\pi^s_*\Delta_x\{1\} \to \Delta_x\{1\} \to
 \]
 in $\Dmixe(\cB)$. Since
 \begin{multline*}
 \Hom_{\Dmixe(\cB)}(\pi^{s*}\pi^s_*\Delta_x\{1\}, \Delta_x\{1\}) \cong \Hom_{\Dmixe(\cB)}(\pi^s_*\Delta_x\{1\}, \pi^s_*\Delta_x\{1\}) \\
 \cong \Hom_{\Dmixe(\cP^s)}(\Delta_{\overline{x}}\{-1\}, \Delta_{\overline{x}}\{-1\})
 \cong \Hom_{\Dmixe(\cP^s_{\overline{x}})} (\ubk_{\cP^s_{\overline{x}}}, \ubk_{\cP^s_{\overline{x}}})
 \end{multline*}
is free of rank 1 over $\bk$, one can assume that the second morphism above is induced by adjunction, and similarly for the first morphism.
\end{proof}

\begin{prop}[cf.~\cite{AR-II}, Proposition~4.4] \label{prop:convo-facts}
 \begin{enumerate}
 \item If $\ell(yw) = \ell(y) + \ell(w)$, then we have isomorphisms 
  \[
   \Delta_{yw} \cong \Delta_y \ast \Delta_w, \qquad \nabla_{yw} \cong \nabla_y \ast \nabla_w
  \]
  in $\Dmixe(\cB)$.
 \item We have isomorphisms
  \[
   \Delta_w \ast \nabla_{w^{-1}} \cong \cE_1 \cong \nabla_{w^{-1}} \ast \Delta_w
  \]
  in $\Dmixe(\cB)$.
 \end{enumerate}
\end{prop}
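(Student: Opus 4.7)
The plan is to reduce both parts to rank-one statements about a simple reflection $s \in S$ via induction. For part~(1), I would induct on $\ell(y) + \ell(w)$, reducing to the base case $\Delta_x \ast \Delta_s \cong \Delta_{xs}$ (and dually $\nabla_x \ast \nabla_s \cong \nabla_{xs}$) whenever $xs > x$. With~(1) established, part~(2) follows by writing a reduced expression $w = s_1 \cdots s_n$ (noting that $w^{-1} = s_n \cdots s_1$ is also reduced) and telescoping:
\[
\Delta_w \ast \nabla_{w^{-1}} \cong \Delta_{s_1} \ast \cdots \ast \Delta_{s_n} \ast \nabla_{s_n} \ast \cdots \ast \nabla_{s_1},
\]
which collapses to $\cE_1$ via iterated application of the base case $\Delta_s \ast \nabla_s \cong \cE_1$ (and its dual $\nabla_s \ast \Delta_s \cong \cE_1$).

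For the rank-one case of~(1), the key ingredient is the preceding lemma, which furnishes a distinguished triangle $\Delta_{xs} \to \pi^{s*}\pi^s_*\Delta_x\{1\} \to \Delta_x\{1\} \xrightarrow{+1}$. The Soergel-side identification $\theta^{s*}\theta^s_*M\{1\} \cong M \otimes_R B_s$ (cf.~\S\ref{ss:bmp-sbim-proof}) gives a functorial isomorphism $\pi^{s*}\pi^s_*\cF\{1\} \cong \cF \ast \cE_s$, under which the counit of $\pi^{s*} \dashv \pi^s_*$ corresponds to $\id_{\Delta_x} \ast \epsilon_s$, with $\epsilon_s \colon \cE_s \to \cE_1\{1\}$ as in~\S\ref{ss:a1-computation}. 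Convolving $\Delta_x$ with the explicit two-term description of $\Delta_s$ from the same section produces a second distinguished triangle sharing $\Delta_x \ast \cE_s$ as middle term; matching the two triangles via the octahedron axiom applied to the composition $\epsilon_s \circ \eta_s = \alpha_s$ yields $\Delta_x \ast \Delta_s \cong \Delta_{xs}$. The $\nabla$-case is dual.

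For the base case $\Delta_s \ast \nabla_s \cong \cE_1$, I would compute the total complex of the $2 \times 2$ bicomplex $\Delta_s \ast \nabla_s$ directly from the explicit descriptions in~\S\ref{ss:a1-computation}. The main Soergel-side input is the decomposition $B_s \otimes_R B_s \cong B_s\{1\} \oplus B_s\{-1\}$, which transfers to $\cE_s \ast \cE_s \cong \cE_s\{1\} \oplus \cE_s\{-1\}$, together with the relation $\epsilon_s \circ \eta_s = \alpha_s$. These ingredients allow one to exhibit an explicit chain contraction that collapses the total complex to $\cE_1$ in degree~$0$.

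The main obstacle is the rank-one case of~(1): the two distinguished triangles around $\Delta_x \ast \cE_s$ have formally different outer terms ($\Delta_{xs}$ and $\Delta_x\{1\}$ versus $\Delta_x\{-1\}$ and $\Delta_x \ast \Delta_s$), so aligning them requires careful bookkeeping with the adjunction units and counits and their interaction with convolution. A cleaner alternative is to transfer the entire argument to the Soergel side via Proposition~\ref{prop:bmp-sbim}, whereupon both parts reduce to Rouquier's theorem that the Rouquier complex $F_w$ is well-defined up to homotopy and satisfies $F_s \otimes F_s^{-1} \cong R$, with the rest following by telescoping as above.
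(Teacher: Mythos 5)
Your overall architecture (induct to reduce part~(1) to the rank-one case $xs>x$, then telescope reduced expressions for part~(2)) is the right one, and your rank-one treatment of $\Delta_s\ast\nabla_s\cong\cE_1$ via $B_s\otimes_RB_s\cong B_s\{1\}\oplus B_s\{-1\}$ and an explicit contraction is fine. But the key rank-one step of part~(1) has a genuine gap. The octahedron applied to the composition $\Delta_x\{-1\}\xrightarrow{\id\ast\eta_s}\Delta_x\ast\cE_s\xrightarrow{\id\ast\epsilon_s}\Delta_x\{1\}$, whose composite is multiplication by $\alpha_s$, produces the triangle
\[
\Delta_{xs}\to\cone(\id\ast\eta_s)\to\cone\bigl(\alpha_s\colon\Delta_x\{-1\}\to\Delta_x\{1\}\bigr)\to,
\]
and the third term is \emph{not} zero (already for $x=1$ it is the cone of $\alpha_s\colon R\{-1\}\to R\{1\}$). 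So matching the two triangles this way cannot yield $\cone(\id\ast\eta_s)\cong\Delta_{xs}$; no amount of bookkeeping with units and counits fixes this. The source of the trouble is that you are using $\Delta_s=\cone(\eta_s\colon\cE_1\{-1\}\to\cE_s)$, following the displayed complexes in \S\ref{ss:a1-computation}; but unwinding the construction of $j_!$ via the objects $\cE^+=[\cE\to i_*i^{[*]}\cE]$ in \S\ref{ss:recollement} shows that $\Delta_s=i_{s!}\ubk_{\cB_s}\{1\}=\cE_s^+$ is the complex $\cE_s\xrightarrow{\epsilon_s}\cE_1\{1\}$ in degrees $0$ and $1$, i.e.\ the \emph{fiber} of $\epsilon_s$, while $\cone(\eta_s)$ is $\nabla_s$ (this is also what makes $\Kb\Phi(\Delta_s)\cong F_s$ in Theorem~\ref{thm:std-rouquier} come out right). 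With the correct identification no octahedron is needed: convolving the triangle $\Delta_s\to\cE_s\xrightarrow{\epsilon_s}\cE_1\{1\}\to$ with $\Delta_x$ gives $\Delta_x\ast\Delta_s\to\Delta_x\ast\cE_s\xrightarrow{\id\ast\epsilon_s}\Delta_x\{1\}\to$, and since $\id\ast\epsilon_s$ agrees (up to a nonzero scalar, by the rank-one Hom computation in the lemma's proof) with the counit $\varepsilon$ in the lemma's triangle $\Delta_{xs}\to\Delta_x\ast\cE_s\xrightarrow{\varepsilon}\Delta_x\{1\}\to$, the two triangles have the same second map and hence isomorphic fibers: $\Delta_x\ast\Delta_s\cong\Delta_{xs}$.

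Two further remarks. First, your ``cleaner alternative'' via Rouquier's theorem is circular in the context of this paper: identifying $\Delta_{yw}$ with the Rouquier complex of a reduced expression for $yw$ is precisely Theorem~\ref{thm:std-rouquier}, whose proof is the induction using the present proposition, and the paper explicitly advertises the proposition as giving a \emph{new} proof of the braid relations. Second, for part~(2) an alternative to the explicit contraction is to use Lemma~\ref{lem:vanishing-delta-nabla} together with part~(1) to see that $\Delta_w\ast\nabla_{w^{-1}}$ has the correct Homs against all $\Delta_x$ and $\nabla_x$, but your direct rank-one computation is equally acceptable.
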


\begin{prop}[cf.~\cite{AR-II}, Proposition~4.6] \label{prop:convo-exact}
 Let $w \in W$.
 \begin{enumerate}
 \item The functors 
  \[
  ({-}) \ast \nabla_w, \ \nabla_w \ast ({-})\colon \Dmixe(\cB) \to \Dmixe(\cB)
  \]
  are right t-exact with respect to the perverse t-structure.\label{it:convo-right-exact}
 \item The functors 
  \[
  ({-}) \ast \Delta_w, \ \Delta_w \ast ({-})\colon \Dmixe(\cB) \to \Dmixe(\cB)
  \]
  are left t-exact with respect to the perverse t-structure.\label{it:convo-left-exact}
 \end{enumerate}
 In particular, for any $w, y \in W$, $\nabla_y \ast \Delta_w$ and $\Delta_w \ast \nabla_y$ are perverse.
\end{prop}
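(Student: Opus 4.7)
The plan is to reduce to the case of a simple reflection via Proposition~\ref{prop:convo-facts}(1), then exploit the t-exactness statements for $\pi^{s*}$ and $\pi^s_*$ immediately preceding this proposition. For a reduced expression $w = s_1 \cdots s_\ell$, the isomorphisms $\nabla_w \cong \nabla_{s_1} \ast \cdots \ast \nabla_{s_\ell}$ and $\Delta_w \cong \Delta_{s_1} \ast \cdots \ast \Delta_{s_\ell}$ express convolution with $\nabla_w$ or $\Delta_w$ on either side as a composition of simple-reflection functors; since right (resp.\ left) t-exactness is preserved under composition, it suffices to treat $w = s \in S$.

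For $s \in S$ the key preliminary is the identification
\[
 \cF \ast \cE_s \;\cong\; \pi^{s*}\pi^s_*\cF\{1\} \;\cong\; \cE_s \ast \cF,
\]
valid for all $\cF \in \Dmixe(\cB)$. This follows from Proposition~\ref{prop:bmp-sbim} via the Soergel-bimodule description $B_s \cong R \otimes_{R^s} R\{1\}$, or equivalently from the projection formula for $\pi^s$ combined with $\cE_s \cong \pi^{s*}\cE^s_{\overline 1}\{1\}$ and the fact that $\cE^s_{\overline 1}$ is the convolution unit on $\cP^s$. The two-stratum geometry of $Y^s_1 = \{1, s\}$ then yields the distinguished triangles in $\Dmixe(\cB)$
\[
 \cE_1\{-1\} \xrightarrow{\eta_s} \cE_s \to \nabla_s \to
 \qquad\text{and}\qquad
 \Delta_s \to \cE_s \xrightarrow{\epsilon_s} \cE_1\{1\} \to,
\]
the latter being the $x = 1$ case of the Lemma immediately preceding this proposition. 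Convolving with an arbitrary $\cF$ and applying the identification produces
\begin{align*}
 \cF\{-1\} \to \pi^{s*}\pi^s_*\cF\{1\} &\to \cF \ast \nabla_s \to, \\
 \cF \ast \Delta_s \to \pi^{s*}\pi^s_*\cF\{1\} &\to \cF\{1\} \to.
\end{align*}
For $\cF \in \p\Dmixe(\cB)^{\le 0}$, I would then use the three t-exactness statements ($\pi^s_*\{1\}$ right t-exact, $\pi^s_*\{-1\}$ left t-exact, $\pi^{s*}\{1\}$ t-exact), combined with the t-exactness of the Tate twist $\la 1 \ra = \{-1\}[1]$, to check that both endpoints of the first triangle lie in $\p\Dmixe^{\le 0}$; by extension closure, $\cF \ast \nabla_s \in \p\Dmixe^{\le 0}$. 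The argument for $\nabla_s \ast (-)$ is identical using the right-hand form of the identification.

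The left t-exactness of $(-) \ast \Delta_w$ and $\Delta_w \ast (-)$ then follows formally from Proposition~\ref{prop:convo-facts}(2): the identities $\Delta_w \ast \nabla_{w^{-1}} \cong \cE_1 \cong \nabla_{w^{-1}} \ast \Delta_w$ exhibit $(-) \ast \Delta_w$ as the inverse of the autoequivalence $(-) \ast \nabla_{w^{-1}}$ (which is right t-exact by the first part), and the inverse of a right t-exact equivalence is left t-exact. The ``in particular'' is immediate: $\nabla_y \in \p\Dmixe^{\ge 0}$ combined with left t-exactness of $(-) \ast \Delta_w$ gives $\nabla_y \ast \Delta_w \in \p\Dmixe^{\ge 0}$, and $\Delta_w \in \p\Dmixe^{\le 0}$ combined with right t-exactness of $\nabla_y \ast (-)$ gives $\nabla_y \ast \Delta_w \in \p\Dmixe^{\le 0}$; hence $\nabla_y \ast \Delta_w$ is perverse, and $\Delta_w \ast \nabla_y$ by the symmetric argument. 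The hard part will be the perverse-degree bookkeeping in the simple-reflection step: since neither $\{1\}$ nor $[1]$ is individually t-exact, and the three given t-exactness facts each involve a specific grading shift, one must combine them carefully---and likely exploit the specific form of the connecting map $\cF \ast \eta_s$ as a unit of adjunction---to land the endpoints of the distinguished triangle precisely in $\p\Dmixe^{\le 0}$.
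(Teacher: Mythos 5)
Your overall architecture is sound: the reduction to $w = s$ via Proposition~\ref{prop:convo-facts}(1), the deduction of part~(2) from part~(1) via the invertibility in Proposition~\ref{prop:convo-facts}(2), and the ``in particular'' are all correct, and your two distinguished triangles are the right ones (consistent with the unlabeled Lemma preceding Proposition~\ref{prop:convo-facts} and with Theorem~\ref{thm:std-rouquier}). But the step you yourself flag as ``the hard part'' is not a bookkeeping issue --- it is false as you have set it up. To conclude by extension closure from the rotated triangle $\cF \ast \cE_s \to \cF \ast \nabla_s \to \cF\la 1 \ra \to$, you need the middle term $\cF \ast \cE_s \cong \pi^{s*}\pi^s_*\cF\{1\}$ to lie in $\p\Dmixe(\cB)^{\le 0}$ whenever $\cF$ does, i.e.\ you need $({-}) \ast \cE_s$ to be right t-exact. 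It is not: taking $\cF = \Delta_s$, the Soergel-side computation $F_s \otimes_R B_s \simeq B_s\{-1\}$ (from $B_s \otimes_R B_s \cong B_s\{1\} \oplus B_s\{-1\}$) gives $\Delta_s \ast \cE_s \cong \cE_s\{-1\} = \cE_s\la 1 \ra[-1]$, which sits in perverse degree exactly $1$. Composing the stated shifted t-exactness facts shows this is the generic bound: $(\pi^{s*}\{1\}) \circ (\pi^s_*\{1\}) = \pi^{s*}\pi^s_*\{2\}$ is right t-exact, so $\pi^{s*}\pi^s_*\cF\{1\}$ lands only in $\p\Dmixe(\cB)^{\le 1}$, and the example shows the bound is attained. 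No care with the connecting map can repair this while you quantify over all $\cF \in \p\Dmixe(\cB)^{\le 0}$.

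The intended argument (cf.~\cite{AR-II}, Proposition~4.6) avoids this by reducing to generators: since $({-}) \ast \nabla_s$ is triangulated and commutes with $\la 1 \ra$ and $[1]$, and $\p\Dmixe(\cB)^{\le 0}$ is generated under extensions by the $\Delta_y\la n \ra[m]$ with $m \ge 0$, it suffices to show $\Delta_y \ast \nabla_s \in \p\Dmixe(\cB)^{\le 0}$ for each $y$. If $ys < y$ one bypasses the triangle entirely: $\Delta_y \ast \nabla_s \cong \Delta_{ys} \ast \Delta_s \ast \nabla_s \cong \Delta_{ys}$. If $ys > y$, then $\Delta_y \ast \cE_s \cong \pi^{s*}\pi^s_*\Delta_y\{1\}$ is, by the Lemma, an extension of $\Delta_y\{1\} = \Delta_y\la -1 \ra[1] \in \p\Dmixe(\cB)^{\le -1}$ and $\Delta_{ys} \in \p\Dmixe(\cB)^{\le 0}$, so it \emph{does} lie in $\p\Dmixe(\cB)^{\le 0}$, and your triangle then finishes. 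Separately, your identification $\cE_s \ast \cF \cong \pi^{s*}\pi^s_*\cF\{1\}$ for \emph{left} convolution is incorrect: $\theta^{s*}\theta^s_*$ is $({-}) \otimes_{R^s} R$, which matches $M \otimes_R B_s$, whereas $B_s \otimes_R M \cong R \otimes_{R^s} M\{1\}$ is a different functor, and the paper only constructs the right-handed $\pi^s$. For $\nabla_s \ast ({-})$ one should run the generator argument on $\nabla_s \ast \Delta_y$, using the inversion anti-automorphism of $\cB$ or a left-handed analogue of $\cP^s$.
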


\begin{thm}[cf.~\cite{AR-II}, Theorem~4.7] \label{thm:ghw}
 Let $I  = \emptyset$ or $I = \{s\}$, where $s \in S$. The (partial) Bruhat moment graph $\cP^I$ satisfies assumption~{\bf (A2)}. Hence by Proposition~\ref{prop:ghw-structure}, $\Pmixc(\cP^I)$ is a graded highest weight category.
\end{thm}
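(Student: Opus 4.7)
The goal is to verify assumption \textbf{(A2)}: for each $p \in W/W_I$, the objects $\Delta_p$ and $\nabla_p$ are perverse. My plan is to handle the regular case $I = \emptyset$ first by length induction on $w \in W$, then deduce the singular case $I = \{s\}$ by transporting along the PSS-$1$ morphism $\pi^s\colon \cB \to \cP^s$.

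For the regular case, the recollement characterization of the perverse t-structure already gives $\Delta_w \in \p\Dmixe(\cB)^{\le 0}$, so the content is $\Delta_w \in \p\Dmixe(\cB)^{\ge 0}$; the $\nabla_w$ statement is dual. For the base case $w = s \in S$, I would use the explicit two-term model $\Delta_s = [\cE_1\{-1\} \xrightarrow{\eta_s} \cE_s]$ from \S\ref{ss:a1-computation}. By the support of $\Delta_s$, it suffices to check $i_t^!\Delta_s \in \p\Dmixe(\cB_t)^{\ge 0}$ for $t \in \{1, s\}$. At $t = s$ this is automatic since $\cB_s$ is open in $Y^s_1$, giving $i_s^!\Delta_s \cong \ubk\{1\}$. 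At $t = 1$, Remark~\ref{rmk:restriction-two-strata} identifies $i_1^!$ with the naive costalk $i_1^{[!]}$; since $\ker \rho_{1, E} = \alpha_s R\{1\} \cong R\{-1\}$ and $\eta_s^1$ is multiplication by $\alpha_s$, the induced map identifies with the identity $R\{-1\} \to R\{-1\}$, so $i_1^!\Delta_s$ is acyclic and hence trivially perverse.

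For the inductive step, fix a reduced expression $w = s_1 \cdots s_n$. Proposition~\ref{prop:convo-facts}(1) gives $\Delta_w \cong \Delta_{s_1} \ast \cdots \ast \Delta_{s_n}$; iterating the left t-exactness of $\Delta_v \ast (-)$ from Proposition~\ref{prop:convo-exact}(2) against the already-perverse objects $\Delta_{s_i}$ yields $\Delta_w \in \p\Dmixe(\cB)^{\ge 0}$.

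For the singular case, let $\overline{x} \in W/W_s$ with minimal length representative $x$. Combining~\eqref{eqn:pi-delta-nabla} and Lemma~\ref{lem:pss-delta-nabla} yields $\Delta_{\overline{x}} \cong \pi^s_* \Delta_x$ and $\nabla_{\overline{x}} \cong \pi^s_* \nabla_x$. Since $\pi^s_*$ is not perverse t-exact, I would verify perversity strata-by-strata on $\cP^s$: for each $\overline{y} \in W/W_s$ with minimal representative $y$, apply the PSS base change of Proposition~\ref{prop:pss} to the fiber square with inclusion $h\colon (\pi^s)^{-1}(\cP^s_{\overline{y}}) = \{y, ys\} \hookrightarrow \cB$ to identify $i_{\overline{y}}^* \Delta_{\overline{x}}$ and $i_{\overline{y}}^! \Delta_{\overline{x}}$ with $\pi^s_* h^*\Delta_x$ and $\pi^s_* h^!\Delta_x$ respectively. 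On this two-stratum fiber, the regular case controls $h^{*,!}\Delta_x$, and a direct computation analogous to the rank-one base case places these in the correct perverse range. I expect this singular-case bookkeeping to be the main technical obstacle, specifically the tracking of Tate twists: the weaker assertion that $\pi^s_*\{\pm 1\}$ is right/left t-exact does not immediately pin down the perverse range of $\pi^s_* \Delta_x$, so the explicit rank-one verification on the fiber is essential before invoking Lemma~\ref{lem:pss-delta-nabla} globally.
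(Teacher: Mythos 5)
The paper prints no proof of this theorem: per the policy announced in the introduction, it is to be copied from \cite[Theorem~4.7]{AR-II} using the results of the surrounding subsections. Your treatment of $I = \emptyset$ is correct and is essentially the intended argument, if slightly longer than necessary: the ``in particular'' clause of Proposition~\ref{prop:convo-exact} already asserts that $\nabla_y \ast \Delta_w$ is perverse, and taking $y = 1$ (so that $\nabla_1 = \cE_1$ is the monoidal unit) gives {\bf (A2)} for $\cB$ in one line. Your rank-one computation ($i_s^!\Delta_s \cong \ubk\{1\}$, $i_1^!\Delta_s = 0$ via $\eta_s^1 = \alpha_s$) and the induction through $\Delta_w \cong \Delta_{s_1}\ast\cdots\ast\Delta_{s_n}$ are both fine.

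The case $I = \{s\}$ has a genuine gap, precisely where you flag ``the main technical obstacle.'' After base change, showing $\Delta_{\overline{x}} = \pi^s_*\Delta_x \in \p\Dmixe(\cP^s)^{\ge 0}$ amounts to showing $\pi^s_*h^!\Delta_x \in \p D^{\ge 0}$ for each fiber $h\colon\{y < ys\}\hookrightarrow\cB$. The regular case only tells you that $h^!\Delta_x$ lies in $\p D^{\ge 0}$ of the fiber, and the sole t-exactness available for $\pi^s_*$ is the shifted statement that $\pi^s_*\{-1\}$ is left t-exact, which yields $\pi^s_*h^!\Delta_x \in \p D^{\ge -1}$ --- off by one. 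Concretely, applying $\pi^s_*$ to the fiber triangle $i_*i^! \to \id \to j_*j^* \to{}$ gives $i_y^!\Delta_x \to \pi^s_*h^!\Delta_x \to i_{ys}^!\Delta_x \to{}$, and the third term is controlled only up to the discrepancy $d_{ys} = d_{\overline{y}}+1$ between the dimension functions; the promised ``direct computation analogous to the rank-one base case'' is not available, since $h^!\Delta_x$ on a general fiber is not explicitly computable (only its perverse amplitude is known from the regular case). To close the gap with tools already in the paper: the halves that do follow from shifted t-exactness are $\Delta_{\overline{x}} \cong \pi^s_*\Delta_{xs}\{1\} \in \p D^{\le 0}$ and $\nabla_{\overline{x}} \cong \pi^s_*\nabla_{xs}\{-1\} \in \p D^{\ge 0}$. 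For the remaining halves, note that $\pi^{s*}\{1\}$ is t-exact and kills no nonzero object (on the Soergel side $\theta^s_*\theta^{s*} \cong \id \oplus \id\{-2\}$, using $\mathrm{char}\,\bk \neq 2$), hence reflects perversity; and $\pi^{s*}\Delta_{\overline{x}}\{1\} = \pi^{s*}\pi^s_*\Delta_x\{1\} \cong \Delta_x \ast \cE_s$, which is perverse because convolving $\Delta_x$ into the two triangles $\cE_1\{-1\} \to \cE_s \to \Delta_s \to{}$ and $\nabla_s \to \cE_s \to \cE_1\{1\} \to{}$ of \S\ref{ss:a1-computation} exhibits $\Delta_x\ast\cE_s$ as an extension of $\Delta_{xs}$ by $\Delta_x\{-1\} \in \p D^{\ge 1}$ and of $\Delta_x\{1\} \in \p D^{\le -1}$ by the perverse object $\Delta_x\ast\nabla_s$ (Proposition~\ref{prop:convo-exact}). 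The argument for $\nabla_{\overline{x}}$ is the mirror image.
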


\begin{ex}
 Let $s \in S$. One easily checks from the definitions that $\cT_s \in \Pmixc(\cB)$ is represented by the image in $\Dmixc(\cB)$ of the three-term complex
 \[
  \cE_1\{-1\} \xrightarrow{\eta_s} \cE_s \xrightarrow{\epsilon_s} \cE_1\{1\},
 \]
 where $\cE_s$ is in degree 0, and $\eta_s$ and $\epsilon_s$ are the morphisms described in~\S\ref{ss:a1-computation}.
\end{ex}

The following result only makes sense when $W$ is finite. Let $w_0$ denote the longest element of $W$.
\begin{prop}[Geometric Ringel duality; cf.~\cite{AR-II}, Proposition~4.11] \label{prop:ringel}
 The functor
 \[
  R := ({-}) \ast \Delta_{w_0}\colon \Dmixc(\cB) \to \Dmixc(\cB)
 \]
 is a triangulated equivalence with quasi-inverse $({-}) \ast \nabla_{w_0}$. Moreover,
 \[
  R(\nabla_w) \cong \Delta_{ww_0}, \qquad R(\cT_w) \cong \cP_{ww_0}.
 \]
\end{prop}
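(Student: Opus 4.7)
The plan is to follow the three-step structure of~\cite[Proposition~4.11]{AR-II}.

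For the equivalence, Proposition~\ref{prop:convo-facts}(2) applied to $w = w_0$ (using $w_0 = w_0^{-1}$) yields $\Delta_{w_0} \ast \nabla_{w_0} \cong \cE_1 \cong \nabla_{w_0} \ast \Delta_{w_0}$, so by associativity of convolution, $R$ and $(-) \ast \nabla_{w_0}$ are mutually quasi-inverse triangulated auto-equivalences of $\Dmixc(\cB)$. For the formula $R(\nabla_w) \cong \Delta_{ww_0}$, the identity $\ell(w^{-1}) + \ell(ww_0) = \ell(w) + (\ell(w_0) - \ell(w)) = \ell(w_0)$ allows Proposition~\ref{prop:convo-facts}(1) to give $\Delta_{w_0} \cong \Delta_{w^{-1}} \ast \Delta_{ww_0}$; combined with the cancellation $\nabla_w \ast \Delta_{w^{-1}} \cong \cE_1$ from Proposition~\ref{prop:convo-facts}(2), one computes $R(\nabla_w) = \nabla_w \ast \Delta_{w_0} \cong \nabla_w \ast \Delta_{w^{-1}} \ast \Delta_{ww_0} \cong \Delta_{ww_0}$.

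To show $R(\cT_w) \cong \cP_{ww_0}$, I would first establish that $R(\cT_w)$ is perverse with a $\Delta$-filtration. Starting from a $\nabla$-filtration $0 = F_0 \subset F_1 \subset \cdots \subset F_n = \cT_w$ of $\cT_w$ in $\Pmixc(\cB)$ and inducting on $i$: left t-exactness of $R = (-) \ast \Delta_{w_0}$ (Proposition~\ref{prop:convo-exact}(2)) gives $R(F_i) \in \p\Dmixc(\cB)^{\ge 0}$; the distinguished triangle $R(F_{i-1}) \to R(F_i) \to \Delta_{y_i w_0}\langle n_i\rangle \to$ has outer terms perverse (by inductive hypothesis and assumption \textbf{(A2)} from Theorem~\ref{thm:ghw}), so $R(F_i) \in \p\Dmixc(\cB)^{\le 0}$ as well, and the triangle becomes a short exact sequence extending the filtration. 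Hence $R(\cT_w) \in \Pmixc(\cB)$ with an induced $\Delta$-filtration, and is indecomposable since $R$ is an equivalence.

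For projectivity, I would verify that $\Ext^1_{\Pmixc(\cB)}(R(\cT_w), M) = 0$ for every $M \in \Pmixc(\cB)$, which combined with the standard filtration identifies $R(\cT_w)$ as an indecomposable projective in the graded highest weight category. Via the equivalence,
\[
 \Ext^1_{\Pmixc(\cB)}(R(\cT_w), M) \cong \Hom_{\Dmixc(\cB)}(\cT_w, R^{-1}(M)[1]),
\]
and since $\cT_w$ is tilting, this Hom vanishes whenever the perverse cohomologies of $R^{-1}(M)$ are $\nabla$-filtered (by induction on the perverse cohomological filtration, using $\Ext^{>0}_{\Pmixc(\cB)}(\cT_w, N) = 0$ for any $\nabla$-filtered $N$, which follows from the $\Delta$-filtration of $\cT_w$ and Lemma~\ref{lem:vanishing-delta-nabla-constructible}). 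Right t-exactness of $(-) \ast \nabla_{w_0}$ (Proposition~\ref{prop:convo-exact}(1)) already places $R^{-1}(M)$ in $\p\Dmixc(\cB)^{\le 0}$; the stronger $\nabla$-filtration property is approached by writing $\nabla_{w_0} \cong \nabla_{s_1} \ast \cdots \ast \nabla_{s_N}$ for a reduced expression (Proposition~\ref{prop:convo-facts}(1)) and analyzing $(-) \ast \nabla_s$ factor by factor on perverse sheaves via the $\mathsf{A}_1$-computations of~\S\ref{ss:a1-computation}. Finally, $R(\cT_w) \cong \cP_{ww_0}$ with no Tate twist follows because $w_0$ reverses the Bruhat order: $\nabla_w$ being the Bruhat-maximal factor (with multiplicity one and no Tate shift) in the $\nabla$-filtration of $\cT_w$ makes $\Delta_{ww_0}$ the unique Bruhat-minimal factor in the $\Delta$-filtration of $R(\cT_w)$, matching the characterization of the indecomposable projective with top simple $\IC_{ww_0}$. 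The main obstacle is the detailed analysis of $(-) \ast \nabla_s$ needed to establish the $\nabla$-filtration property underpinning the projectivity argument.
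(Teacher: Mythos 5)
Your first two claims are handled exactly as in the paper's source \cite[Proposition~4.11]{AR-II}: Proposition~\ref{prop:convo-facts}(2) with $w=w_0=w_0^{-1}$ gives the equivalence, and the factorization $\Delta_{w_0}\cong\Delta_{w^{-1}}\ast\Delta_{ww_0}$ plus cancellation gives $R(\nabla_w)\cong\Delta_{ww_0}$. The deduction that $R(\cT_w)$ is perverse with a standard filtration is also correct (and can be shortened: once $R$ sends each $\nabla_{y}\la n\ra$ to $\Delta_{yw_0}\la n\ra$, the image of the costandard filtration is a filtration by perverse objects, since the heart is extension-closed).

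The gap is in the projectivity step, and you have named it yourself. You aim to prove $\Ext^1_{\Pmixc(\cB)}(R(\cT_w),M)=0$ for \emph{every} perverse $M$, which forces you to control $R^{-1}(M)=M\ast\nabla_{w_0}$ for arbitrary $M$; the ``perverse cohomologies of $R^{-1}(M)$ are $\nabla$-filtered'' statement you would need is not available (right t-exactness only gives $R^{-1}(M)\in\p\Dmixc(\cB)^{\le0}$, and the factor-by-factor analysis of $({-})\ast\nabla_s$ is exactly the missing content). This detour is unnecessary: in a graded highest weight category it suffices to test against standard objects, whose images under $R^{-1}$ you already know \emph{exactly}. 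Indeed, by adjunction and the identity $R^{-1}(\Delta_y\la n\ra)\cong\nabla_{yw_0}\la n\ra$ (the second claim read backwards),
\[
 \Hom_{\Dmixc(\cB)}\bigl(R(\cT_w),\Delta_y\la n\ra[m]\bigr)\cong\Hom_{\Dmixc(\cB)}\bigl(\cT_w,\nabla_{yw_0}\la n\ra[m]\bigr)=0\quad\text{for }m>0,
\]
the vanishing coming from the standard filtration of $\cT_w$ and Lemma~\ref{lem:vanishing-delta-nabla-constructible}. Vanishing of $\Ext^{>0}(R(\cT_w),\Delta_y\la n\ra)$ for all $y,n$ then yields $\Ext^{>0}(R(\cT_w),\IC_y\la n\ra)=0$ by induction along the Bruhat order (the kernel of $\Delta_y\la n\ra\twoheadrightarrow\IC_y\la n\ra$ has composition factors indexed by $z<y$), hence $R(\cT_w)$ is projective. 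Your concluding identification $R(\cT_w)\cong\cP_{ww_0}$ via the untwisted surjection $R(\cT_w)\twoheadrightarrow\Delta_{ww_0}$ and indecomposability is fine. With the projectivity argument replaced as above, the proof is complete and coincides with the one the paper intends.
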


\subsection{Koszulity}
\label{ss:koszul}
In this subsection, we discuss the Koszulity of $\Pmixc(\cB)$.

\begin{prop} \label{prop:soergel-conjecture}
The following are equivalent:
\begin{enumerate}
  \item for all $w \in W$, $\cE_w \cong \IC_w$;
  \item for all $x < w$, $j_x^*\cE_w$ is generated in degrees $< -\ell(x)$;
  \item for all $x < w$, $j_x^!\cE_w$ is generated in degrees $> -\ell(x)$;
  \item $(W, \fh)$ satisfies (the analogue of) Soergel's conjecture (see~\cite[Conjecture~3.16]{EW-soergel-calculus}).
 \end{enumerate}
\end{prop}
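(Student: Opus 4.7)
The plan is to establish the equivalences in two groups: $(1)\Leftrightarrow(2)\Leftrightarrow(3)$ via the characterization of the simple perverse sheaf $\IC_w$ by its strict support bounds, combined with a self-duality of $\cE_w$; and $(1)\Leftrightarrow(4)$ by transfer through the equivalence $F\colon\BMPe(\cB)\simto\SBim$ of Proposition~\ref{prop:bmp-sbim}.

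For $(1)\Leftrightarrow(2)$ and $(1)\Leftrightarrow(3)$, the main observation is that, unpacking the perverse t-structure on a single stratum from~\S\ref{ss:perverse-t-structure}, conditions~(2) and~(3) translate exactly into the strict perverse bounds
\[
 j_x^*\cE_w \in \p\Dmixc(\cB_x)^{\le -1} \quad\text{and}\quad j_x^!\cE_w \in \p\Dmixc(\cB_x)^{\ge 1}
\]
for all $x<w$ (under our shift convention, a complex of graded free $R$-modules whose cohomological degree~$i$ term has summands $R\{n_k\}$ lies in $\p\Dmixc(\cB_x)^{\le -1}$ precisely when $n_k \ge i+\ell(x)+1$ for all $i,k$). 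Since $\IC_w$ is uniquely characterized within $\Dmixc(\cB)$ as an extension of $\ubk_{\cB_w}\{d_w\}$ from $\cB_w$ supported on $\{\le w\}$ and satisfying these strict bounds on all smaller strata, the forward implications read off directly. Conversely, $(2)$ and~$(3)$ together---combined with the normalization $j_w^*\cE_w\cong j_w^!\cE_w\cong \ubk_{\cB_w}\{d_w\}$, which follows from the support of $\cE_w$ on $\{\le w\}$ together with Remark~\ref{rmk:restriction-two-strata} applied to the local picture near $w$---place $\cE_w$ in $\Pmixc(\cB)$ and force $\cE_w\cong\IC_w$ by the IC characterization.

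The individual equivalence $(2)\Leftrightarrow(3)$ requires a self-duality of $\cE_w$: the indecomposable Soergel bimodule $B_w$ is known to be self-dual (up to shift) under $\mathbb{D}=\Hom_R^\bullet(-,R)$, and this transports through $F$ to a self-duality of $\cE_w\in\BMPe(\cB)$. Passed to $\Dmixc(\cB)$, the induced duality interchanges $j_x^*$ and $j_x^!$ up to a Tate twist, so the degree bounds on stalks and costalks imply each other. For $(1)\Leftrightarrow(4)$, Soergel's conjecture asserts that the graded character of $B_w$ in the Hecke algebra equals the Kazhdan--Lusztig basis element $b_w$. Via $F$ this character is determined by the graded ranks of the stalks $(\cE_w)^x$ for $x\le w$ (Fiebig's moment-graph character formula, \cite{Fie-coxeter}), and the equality with $b_w$ is the standard reformulation equivalent to condition~(2).

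The main obstacle is the construction of the self-duality on $\Dmixc(\cB)$ and the verification that it interchanges $j_x^*$ and $j_x^!$. Since Verdier duality has not been developed for general moment graphs, this duality must be built by transport from the Soergel side, and its compatibility with the recollement-based definitions of $j_x^*$ and $j_x^!$ must be checked directly. I expect this to reduce, via Proposition~\ref{prop:pss} together with the $\mathsf{A}_1$-computations of~\S\ref{ss:a1-computation}, to a local check on two-stratum pieces, where Remark~\ref{rmk:restriction-two-strata} identifies the recollement functors with the naive ones.
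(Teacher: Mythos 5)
Your decomposition matches the paper's exactly: (1) $\Leftrightarrow$ (2)+(3) via the characterization of intermediate extensions from \cite[Corollaire~1.4.24]{BBD}, (2) $\Leftrightarrow$ (3) via the self-duality of $B_w$, and the bridge to (4) via Fiebig's character-theoretic result in \cite{Fie-coxeter} (the paper simply cites \cite[Proposition~8.4]{Fie-coxeter}, which is the content you sketch). Your translation of (2) and (3) into the strict perverse bounds $j_x^*\cE_w \in \p\Dmixc(\cB_x)^{\le -1}$ and $j_x^!\cE_w \in \p\Dmixc(\cB_x)^{\ge 1}$ is also correct. The one place you diverge --- and where you flag ``the main obstacle'' --- is unnecessary: for (2) $\Leftrightarrow$ (3) you do not need to construct a duality on the triangulated category $\Dmixc(\cB)$ or check its compatibility with recollement (indeed, the paper deliberately avoids developing Verdier duality for moment graphs). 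Since $\cE_w$ is a single BMP sheaf and $\cB$ satisfies {\bf (V)} (Proposition~\ref{prop:bruhat-V}), Remark~\ref{rmk:restriction-two-strata} identifies $j_x^!\cE_w$ with the naive partial costalk $i_x^{[!]}\cE_w$, and $j_x^*\cE_w$ with the naive stalk; under the equivalence $F$ these are the subquotients of the nabla- and Delta-flags of $B_w$, which the contravariant duality $\Hom_R^\bullet(-,R)$ on $\SBim$ interchanges while fixing $B_w$ up to shift. So the entire step lives in the additive category, and your anticipated reduction to two-stratum local checks is not needed. With that simplification your argument is the paper's argument.
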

\begin{proof}
 The equivalence (1) $\Leftrightarrow$ (2)+(3) follows from the characterization of $\IC_w$ in~\cite[Corollaire~1.4.24]{BBD}, (2) $\Leftrightarrow$ (3) follows from the self-duality of indecomposable Soergel bimodules under the contravariant duality, and (2) $\Leftrightarrow$ (4) is proved in~\cite[Proposition 8.4]{Fie-coxeter}.
\end{proof}

Assume that $W$ is finite, so projective covers $\cP_w \twoheadrightarrow \IC_w$ exist in $\Pmixc(\cB)$. Consider the objects
\[
 \cP := \bigoplus_{w \in W} \cP_w, \qquad \cE := \bigoplus_{w \in W} \cE_w, \qquad \IC := \bigoplus_{w \in W} \IC_w
\]
in $\Dmixc(\cB)$. Define the graded $\bk$-algebras
\begin{align*}
 A^\proj_{W, \fh} := \left(\bigoplus_n \Hom(\cP, \cP\la n \ra)\right)^\opp, \qquad A^\parity_{W, \fh} := \bigoplus_n \Hom(\cE, \cE\{n\}).
\end{align*}
\begin{prop}
 Suppose that $(W, \fh)$ satisfies Soergel's conjecture. Then $A^\proj_{W, \fh}$ and $A^\parity_{W, \fh}$ are Koszul, and Koszul dual to each other.
\end{prop}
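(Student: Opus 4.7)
The plan is to deduce the proposition from Beilinson--Ginzburg--Soergel-style Koszul duality (\cite{BGS}), using Soergel's conjecture to establish purity of $\Ext$ groups of simples in $\Pmixc(\cB)$. The starting observation is that under the conjecture, $\IC_w \cong \cE_w$ (Proposition~\ref{prop:soergel-conjecture}), so simple perverse sheaves live in cohomological degree $0$ of $\Dmixc(\cB) = \Kb\BMPc(\cB)$; all the required vanishing will follow.

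First I would set up the equivalence $\Pmixc(\cB) \simeq A^\proj_{W,\fh}\lh\gmod$. Since $W$ is finite and $\Pmixc(\cB)$ is graded highest weight (Theorem~\ref{thm:ghw}) with finitely many simples up to Tate twist, it admits a graded projective generator $\cP$, and $\Hom_{\Pmixc}(\cP,-)$ gives the desired equivalence, identifying $\IC$ with the semisimple quotient $A^\proj_0$. By Lemma~\ref{lem:tilt-perv-der-eq}, the realization $\Db\Pmixc(\cB)\simto\Dmixc(\cB)$ identifies $\Ext^n_{\Pmixc}(\IC,\IC\la m\ra)$ with $\Hom_{\Dmixc}(\IC,\IC\la m\ra[n])$.

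The key step is Koszul purity. Using $\IC_w \cong \cE_w$ and $\la 1\ra = \{-1\}[1]$, one rewrites
\[
  \Ext^n_{\Pmixc}(\IC_y,\IC_w\la m\ra) \cong \Hom_{\Kb\BMPc(\cB)}(\cE_y,\cE_w\{-m\}[n+m]),
\]
which vanishes unless $n+m=0$, since $\cE_y$ and $\cE_w\{-m\}$ are complexes concentrated in a single cohomological degree. This is precisely the BGS purity condition (with sign conventions matching $\la 1\ra = \{-1\}[1]$), which is equivalent to $A^\proj_{W,\fh}$ being Koszul by the standard BGS criterion (using that $A^\proj_{W,\fh}$ is positively graded with semisimple degree-$0$ part, which follows from the axioms of the graded highest weight structure).

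Finally I would identify $A^\parity_{W,\fh}$ with the Koszul dual $E(A^\proj_{W,\fh})$. Again under Soergel's conjecture,
\[
  A^\parity_n = \Hom_{\Dmixc}(\cE,\cE\{n\}) \cong \Hom_{\Dmixc}(\IC,\IC\la -n\ra[n]) \cong \Ext^n_{\Pmixc}(\IC,\IC\la -n\ra),
\]
which by purity accounts for the entire bigraded Ext algebra. A routine check that Yoneda product in $\Ext$ matches composition in $\Dmixc$ gives $A^\parity_{W,\fh} \cong E(A^\proj_{W,\fh})$ as graded algebras. By BGS, the Koszul dual of a Koszul algebra is Koszul with the original as its double dual, so $A^\parity_{W,\fh}$ is Koszul and the two algebras are mutually Koszul dual. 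The main technical obstacle is the careful bookkeeping of the three shifts $\{1\}$, $[1]$, and $\la 1\ra = \{-1\}[1]$ required to match purity conventions and verify the algebra isomorphism $A^\parity_{W,\fh} \cong E(A^\proj_{W,\fh})$, together with the (standard but nontrivial) verification that $A^\proj_{W,\fh}$ is positively graded so that BGS duality applies.
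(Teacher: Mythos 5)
Your proposal is correct and follows essentially the same route as the paper: reduce $\Ext$ of simples to $\Hom$ of parity objects via Lemma~\ref{lem:tilt-perv-der-eq} and Proposition~\ref{prop:soergel-conjecture}, deduce the purity vanishing from the fact that $\cE$ sits in a single cohomological degree, and conclude via the BGS criterion after identifying $A^\parity_{W, \fh}$ with $E(A^\proj_{W, \fh})$. The only small difference is that the paper derives positivity of the grading on $A^\proj_{W, \fh}$ from the already-established Koszulity of the category $\Pmixc(\cB)$ (citing~\cite{AR-III}) rather than from the highest weight axioms alone, but this is immaterial since you prove the purity first.
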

See~\cite[\S2]{BGS} for a discussion of a Koszul (graded) ring; in the proof below, we verify one of the equivalent conditions defining Koszulity. For a Koszul ring $B = \bigoplus_{i \ge 0} B_i$, its Koszul dual ring is $E(B) := \Ext_{B\lh\ngmod}^\bullet(B_0)$, where Ext is taken in ungraded $B$-modules.
\begin{proof}
 To ease the notation, we drop the subscript $W, \fh$.
 
 First, as in~\cite[Corollary~3.17]{AR-III}, by the second equivalence in Lemma~\ref{lem:tilt-perv-der-eq} and Proposition~\ref{prop:soergel-conjecture}, we have
 \begin{equation} \label{eq:koszul-ext-hom}
  \bigoplus_{n, m}\Ext_{\Pmixc(\cB)}^n(\IC, \IC\la m \ra) \cong \bigoplus_{n, m}\Hom_{\Dmixc(\cB)}^n(\cE, \cE[n+m]\{-m\}).
 \end{equation}
 Since $\cE$ is concentrated in cohomological degree 0, the Hom space in the last expression vanishes unless $m = -n$. Thus $\Pmixc(\cB)$ is a Koszul category in the sense of~\cite{AR-III}. Moreover, the last expression equals
 \[
  \bigoplus_n\Hom_{\Dmixc(\cB)}^n(\cE, \cE\{n\}) = A^\parity.
 \]
 
 By~\cite[Proposition~2.5]{AR-III}, $\Pmixc(\cB)$ Koszul implies that $A^\proj$ is positively graded. Now, by a standard argument, there is an equivalence
 \begin{align} \label{eqn:Pmix-gmod}
  \alpha := \bigoplus_{n \in \Z} \Hom(\cP, (-)\la n \ra)\colon \Pmixc(\cB) \simto A^\proj\lh\gmod^\fg
 \end{align}
 satisfying $\alpha \circ \la 1 \ra = \{1\} \circ \alpha$ and $\alpha(\IC) \cong (A^\proj)_0 = A^\proj/(A^\proj)_{>0}$. We therefore have a graded $\bk$-algebra isomorphism
 \[
  E(A^\proj) = \bigoplus_{n, m} \Ext_{A^\proj\lh\gmod}^n((A^\proj)_0, (A^\proj)_0\{m\})
  \cong \bigoplus_{n, m}\Ext_{\Pmixc(\cB)}^n(\IC, \IC\la m \ra).
 \]
 Since $A^\proj$ is positively graded with $(A^\proj)_0$ semisimple, the same vanishing as in~\eqref{eq:koszul-ext-hom} is equivalent to the Koszulity of $A^\proj$ \cite[Proposition 2.1.3]{BGS}. Putting everything together, the result is proved.
\end{proof}

\subsection{Application to Rouquier complexes}
\label{ss:rouquier-complexes}
We recall the definition of Rouquier complexes~\cite{Rou}. For $s \in S$, let $F_s$ (resp.~$E_s$) be the image in $\Kb\SBim$ of the complex
\[
 B_s \to R\{1\} \qquad (\text{resp.~} R\{-1\} \to B_s),
\]
where $B_s$ is in degree 0, and the differential is the unique nonzero map up to scalar. For $w \in W$, choose a reduced expression $w = s_1 \ldots s_l$, and define
\[
 F_w = F_{s_1} \cdots F_{s_l}, \qquad E_w = E_{s_1} \cdots E_{s_l}.
\]
Rouquier showed that $F_s, E_s$ satisfy the braid relations up to homotopy equivalence, so that $F_w$ and $E_w$ are well-defined up to isomorphism as objects in $\Kb\SBim$.

Let $\Phi\colon \BMPe(\cB) \simto \SBim$ be the equivalence of Proposition~\ref{prop:bmp-sbim}.
\begin{thm} \label{thm:std-rouquier}
 For all $w \in W$, we have
 \[
  \Kb\Phi(\Delta_w) \cong F_w, \qquad \Kb\Phi(\nabla_w) \cong E_w.
 \]
\end{thm}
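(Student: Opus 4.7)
\bigskip

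The plan is to induct on $\ell(w)$, reducing the general case to the case $w = s \in S$, which is handled by the explicit calculation in \S\ref{ss:a1-computation}, and then to bootstrap using the compatibility of convolution on $\Dmixe(\cB)$ with the tensor product on $\Kb\SBim$. The two assertions (for $\Delta_w$ and $\nabla_w$) are parallel and we treat them together.

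\textbf{Base cases.} When $w = 1$, both $\Delta_1$ and $\nabla_1$ equal $\cE_1$, and $F_1 = E_1 = R$; since $\Phi(\cE_1) = B_1 = R$, the claim is immediate. When $w = s \in S$, the computation of \S\ref{ss:a1-computation} exhibits $\Delta_s$ and $\nabla_s$ as the images in $\Dmixe(\cB) = \Kb\BMPe(\cB)$ of the two-term complexes
\[
 \cE_1\{-1\} \xrightarrow{\eta_s} \cE_s, \qquad \cE_s \xrightarrow{\epsilon_s} \cE_1\{1\},
\]
with $\cE_s$ in degree $0$. Applying $\Kb\Phi$ termwise (using $\Phi(\cE_1) = R$, $\Phi(\cE_s) = B_s$, and the fact that $\Hom_{\SBim}(R\{-1\}, B_s)$ and $\Hom_{\SBim}(B_s, R\{1\})$ are each free of rank one over $\bk$, so $\Phi$ must carry $\eta_s, \epsilon_s$ to nonzero scalar multiples of the canonical maps) yields exactly the defining complexes of $F_s$ and $E_s$.

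\textbf{Inductive step.} Given $w \in W$ with $\ell(w) \ge 2$, fix a reduced expression $w = s_1 s_2 \cdots s_l$ and set $y := s_2 \cdots s_l$, so that $\ell(w) = \ell(s_1) + \ell(y)$. By Proposition~\ref{prop:convo-facts}(1) we have isomorphisms
\[
 \Delta_w \cong \Delta_{s_1} \ast \Delta_y, \qquad \nabla_w \cong \nabla_{s_1} \ast \nabla_y
\]
in $\Dmixe(\cB)$. By the very definition of the convolution $\ast$ in \S5.3, $\Phi$ and its passage to $\Kb$ intertwine $\ast$ with the tensor product on $\Kb\SBim$, so
\[
 \Kb\Phi(\Delta_w) \cong \Kb\Phi(\Delta_{s_1}) \otimes_R \Kb\Phi(\Delta_y),
\]
and similarly for $\nabla_w$. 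Applying the base case and the inductive hypothesis to the two factors gives $\Kb\Phi(\Delta_w) \cong F_{s_1} \cdot F_y = F_w$ and $\Kb\Phi(\nabla_w) \cong E_{s_1} \cdot E_y = E_w$, as required. Note that the inductive argument does not rely on the independence of $F_w, E_w$ from the reduced expression: that independence (Rouquier's braid relations) is in fact a by-product, since the left-hand sides $\Delta_w, \nabla_w$ are manifestly intrinsic to $w$.

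\textbf{Where the subtlety lies.} The main point requiring care is the base case: one must verify that the two-term complexes produced in \S\ref{ss:a1-computation} really are the images of $\Delta_s$ and $\nabla_s$ under the identification $\Dmixe(\cB) = \Kb\BMPe(\cB)$ used to define the Rouquier complexes, and that the scalar ambiguity in $\Phi(\eta_s), \Phi(\epsilon_s)$ does not affect the isomorphism class of the resulting two-term complex (a rescaling is an isomorphism of complexes). Beyond this, the inductive step is formal, relying only on the two facts already established: the convolution-tensor compatibility built into the definition of $\ast$, and the standard/costandard convolution identity of Proposition~\ref{prop:convo-facts}(1).
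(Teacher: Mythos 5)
Your proof is correct and follows essentially the same route as the paper's: the case $w = s$ is read off from the $\mathsf{A}_1$ computation of \S\ref{ss:a1-computation} together with Proposition~\ref{prop:bmp-sbim}, and the general case follows by induction on $\ell(w)$ via Proposition~\ref{prop:convo-facts}(1) and the compatibility of $\ast$ with $\otimes_R$. Your added remarks on the scalar ambiguity in $\Phi(\eta_s), \Phi(\epsilon_s)$ and on the braid relations being a by-product rather than an input are accurate and fill in details the paper leaves implicit.
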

\begin{proof}
 For $w \in S$, this follows from the computation in~\S\ref{ss:a1-computation} and Proposition~\ref{prop:bmp-sbim}. The general case follows by induction on $\ell(w)$ using Proposition~\ref{prop:convo-facts}.
\end{proof}
In fact, this gives a new proof that $F_s, E_s$ satisfy the braid relations.

\begin{rmk}
 If one unravels the intuition for the mixed derived category, this result says that Rouquier complexes describe the subquotients in the weight filtrations of standard and costandard sheaves on the flag variety associated to $(W, \fh)$. For $W$ a Weyl group, this interpretation has been described in~\cite[\S3.3]{WW}.
\end{rmk}
 
\begin{cor}[Rouquier's formula~\cite{LW}] \label{cor:rouquier-formula}
 Let $x, w \in W$. We have
  \[
  \bigoplus_{n \in \Z} \Hom_{\Kb\SBim} (F_x, E_w \{n\} [i]) \cong
   \begin{cases}
    R & \text{if $x = w$ and $i = 0$;} \\
    0   & \text{otherwise,}
   \end{cases}
 \]
 as graded $R$-modules.
\end{cor}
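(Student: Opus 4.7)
The corollary is essentially a translation of the Hom-vanishing Lemma~\ref{lem:vanishing-delta-nabla} through the equivalence $\Kb\Phi\colon \Dmixe(\cB) \simto \Kb\SBim$ coming from Proposition~\ref{prop:bmp-sbim}, once Rouquier complexes are identified with (co)standard sheaves. So my plan is to first invoke Theorem~\ref{thm:std-rouquier} to rewrite the Hom spaces in $\Kb\SBim$ as Hom spaces between standard and costandard sheaves in $\Dmixe(\cB)$, and then quote Lemma~\ref{lem:vanishing-delta-nabla}.

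More concretely, since $\Phi$ is an equivalence of additive categories commuting with $\{1\}$, the induced functor $\Kb\Phi$ is a triangulated equivalence compatible with both shifts $\{1\}$ and $[1]$. By Theorem~\ref{thm:std-rouquier}, under this equivalence $F_x$ corresponds to $\Delta_x$ and $E_w$ to $\nabla_w$. Hence for each $n, i \in \Z$,
\[
 \Hom_{\Kb\SBim}(F_x, E_w\{n\}[i]) \cong \Hom_{\Dmixe(\cB)}(\Delta_x, \nabla_w\{n\}[i]).
\]
Summing over $n \in \Z$ yields an isomorphism of graded $R$-modules
\[
 \bigoplus_{n \in \Z}\Hom_{\Kb\SBim}(F_x, E_w\{n\}[i]) \cong \Hom^\bullet_{\Dmixe(\cB)}(\Delta_x, \nabla_w[i]),
\]
where the $R$-action on the right is the natural one from~\S\ref{ss:moment-graph-sheaves}, and on the left is transported via $\Kb\Phi$ (one should check these two actions agree, but this is immediate from the definition of $\Phi$ at the level of global sections).

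Now the right-hand side is computed by Lemma~\ref{lem:vanishing-delta-nabla} (applied to the moment graph $\cB$ with $s = x$, $t = w$), giving $R$ when $x = w$ and $i = 0$, and $0$ otherwise. Combining these two steps yields the claim. There is no real obstacle here: the entire content of the corollary has already been distilled into Theorem~\ref{thm:std-rouquier} (which reinterprets Rouquier complexes geometrically) and Lemma~\ref{lem:vanishing-delta-nabla} (the standard--costandard orthogonality in the mixed derived category of a moment graph). The only small point deserving a sentence is the compatibility of the $R$-module structures on the two sides of the identification, which follows from the fact that $\Phi$ is $R$-linear on each stalk.
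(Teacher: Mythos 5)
Your proposal is correct and is exactly the paper's argument: apply the equivalence $\Kb\Phi$ to the standard--costandard orthogonality of Lemma~\ref{lem:vanishing-delta-nabla}, using Theorem~\ref{thm:std-rouquier} to identify $F_x$ with $\Delta_x$ and $E_w$ with $\nabla_w$. The extra remark about compatibility of $R$-module structures is a reasonable (and harmless) addition that the paper leaves implicit.
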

\begin{proof}
 By Theorem~\ref{thm:std-rouquier}, this follows from $\Kb\Phi$ applied to Lemma~\ref{lem:vanishing-delta-nabla}.
\end{proof}

\bibliographystyle{alpha}
\bibliography{refs.bib}

\end{document}